\begin{document}

\title{Positive Radial Solutions for an Iterative System of Nonlinear Elliptic Equations in an Annulus}

\titlerunning{Positive Radial Solutions for an Iterative System of Nonlinear Elliptic Eq.}        

\author{Mahammad Khuddush${}^\star$ \and K. Rajendra Prasad}

\authorrunning{M. Khuddush, K. R. Prasad} 

\institute{   Department of Applied Mathematics\\
              College of Science and Technology\\
              Andhra University, Visakhapatnam, 530003, India\\
              ${}^\star$Corresponding author: khuddush89@gmail.com         
}

\date{Received: date / Accepted: date}

\maketitle

\begin{abstract}
This paper deals with the existence of positive radial solutions to the iterative system of nonlinear elliptic equations of the form 
$$
\begin{aligned}
	\triangle{\mathtt{u}_{{\dot{\iota}} }}-\frac{(\mathtt{N}-2)^2r_0^{2\mathtt{N}-2}}{\vert x\vert^{2\mathtt{N}-2}}\mathtt{u}_{\dot{\iota}} +\ell(\vert x\vert)\mathtt{g}_{{\dot{\iota}} }(\mathtt{u}_{{\dot{\iota}} +1})=0,~\mathtt{R}_1<\vert x\vert<\mathtt{R}_2,
\end{aligned}
$$
where ${\dot{\iota}} \in\{1,2,3,\cdot\cdot\cdot,\mathtt{n}\},$ $  \mathtt{u}_1=  \mathtt{u}_{\mathtt{n}+1},$ $\triangle{\mathtt{u}}=\mathtt{div}(\triangledown  \mathtt{u}),$ $\mathtt{N}>2,$ $\ell=\prod_{i=1}^{m}\ell_i,$ each $\ell_i:(r_0,+\infty)\to(0,+\infty)$ is continuous, $r^{\mathtt{N}-1}\ell$ is integrable, and  $\mathtt{g}_{\dot{\iota}} :[0,+\infty)\to\mathbb{R}$ is continuous,  by an application of various fixed point theorems in a Banach space. Further, we also establish uniqueness of solution to the addressed system by using Rus's theorem in a complete metric space. 
\keywords{Nonlinear elliptic equation \and annulus \and positive radial solution\and fixed point theorem\and Banach space\and Rus's theorem\and metric space}
\subclass{35J66\and 35J60\and 34B18\and 47H10}
\end{abstract}

\section{Introduction}
The semilinear elliptic equation of the form
\begin{equation}\label{eq01}
	\triangle \mathtt{u}+\mathtt{g}(\vert x\vert)\mathtt{u}+\mathtt{h}(\vert x\vert)\mathtt{u}^\mathtt{p}=0
\end{equation}
arise in various fields of pure and applied mathematics such as Riemannian geometry, nuclear physics, astrophysics and so on. For more details of the background of \eqref{eq01}, see \cite{ni, yana}. Study of nonlinear elliptic system of equations,
\begin{equation}\label{eq11}\left.
	\begin{aligned}
		&\triangle{  \mathtt{u}_{{\dot{\iota}} }}+ \mathtt{g}_{{\dot{\iota}} }(  \mathtt{u}_{{\dot{\iota}} +1})=0~~\text{in}~~\Omega,\\
		&\hskip1cm  \mathtt{u}_{{\dot{\iota}} }=0~~\text{on}~~\partial\Omega,\\
	\end{aligned}\right\}
\end{equation}
where ${\dot{\iota}} \in\{1,2,3,\cdot\cdot\cdot,{\mathtt{n}}\},$ $  \mathtt{u}_1=  \mathtt{u}_{\mathtt{n}+1},$ and $\Omega$ is a bounded domain in $\mathbb{R}^N$, has an important applications in population dynamics, combustion theory and chemical reactor theory. The recent literature for the existence, multiplicity and uniqueness of positive solutions for \eqref{eq11}, see \cite{dal, hai1, ali1, hai2, hai3, ali2} and references therein.

In \cite{mei}, Mei studied the structure of radial solutions to the quasilinear elliptic problem of the form
$$
\begin{aligned}
	&\triangle_{\mathtt{p}}\mathtt{u}-\frac{\uplambda}{\mathtt{u}^\mathtt{q}}=0~\text{in}~\Omega,\\
	&\hskip0.4cm0<\mathtt{u}<1~\text{in}~\Omega,\\
	&\hskip0.6cm\mathtt{u}=1~\text{on}~\partial\Omega,
\end{aligned}
$$
where $1<\mathtt{p}\le\mathtt{N}\,(\mathtt{N}\ge2),\mathtt{q}>0,\uplambda>0$ and $\Omega=\{x\in\mathbb{R}^\mathtt{N}:\vert x\vert<1\}.$ In \cite{li1}, Li studied the existence of positive radial solutions of the elliptic equation with nonlinear gradient term
$$
\begin{aligned}
	&-\triangle \mathtt{u}=\mathtt{g}\big(\vert x\vert, \mathtt{u},\frac{x}{\vert x\vert}\cdot\nabla \mathtt{u}\big)=0~~\text{in}~~\Omega_a^b,\\
	&\hskip1.6cm \mathtt{u}=0~~\text{on}~~\partial\Omega_a^b,
\end{aligned}
$$
by using fixed point index theory in cones. In \cite{chro}, Chrouda and Hassine established the uniqueness of positive radial solutions to the following Dirichlet boundary value problem for the semilinear elliptic equation in an annulus,
$$
\begin{aligned}
\triangle\mathtt{u}=\mathtt{g}(\mathtt{u})&~~\text{on}~~\Omega=\{x\in\mathbb{R}^\mathtt{d}:a<\vert x\vert<b\},\\
&\mathtt{u}=0~~\text{on}~~\mathtt{u}\in\partial\Omega,
\end{aligned}
$$
for any dimension $\mathtt{d}\ge1.$ In \cite{kaji}, R. Kajikiya and E. Ko established the existence of positive radial solutions for a semipositone elliptic equation of the form,
$$
\begin{aligned}
	&\triangle \mathtt{u}+\uplambda\mathtt{g}(\mathtt{u})=0~~\text{in}~~\Omega,\\
	&\hskip0.7cm \mathtt{u}=0~~\text{on}~~\partial\Omega,
\end{aligned}
$$
where $\Omega$ is a ball or an annulus in $\mathbb{R}^\mathtt{N}.$ Recently, Son and Wang \cite{son} established positive radial solutions to the nonlinear elliptic systems,
$$
\begin{aligned}
	&\triangle{  \mathtt{u}_{{\dot{\iota}} }}+\uplambda\mathtt{K}_{\dot{\iota}} (\vert x\vert)\mathtt{g}_{{\dot{\iota}} }(  \mathtt{u}_{{\dot{\iota}} +1})=0~\text{in}~\Omega_\mathtt{E},\\
	&\hskip1.25cm  \mathtt{u}_{{\dot{\iota}} }=0~\text{on}~\vert x\vert=r_0,\\
	&\hskip1.05cm  \mathtt{u}_{{\dot{\iota}} }\to0~\text{as}~\vert x\vert\to+\infty,
\end{aligned}
$$
where ${\dot{\iota}} \in\{1,2,3,\cdot\cdot\cdot,{\mathtt{n}}\},$ $  \mathtt{u}_1=  \mathtt{u}_{\mathtt{n}+1},$ $\uplambda>0,$ $N>2,$ $r_0>0,$ and $\Omega_\mathtt{E}$ is an exterior of a ball. Motivated by the above works, in this paper we study the existence of infinitely many positive radial solutions for the following iterative system of nonlinear elliptic equations in an annulus,
\begin{equation}\label{eq12}
	\triangle{\mathtt{u}_{{\dot{\iota}} }}-\frac{(\mathtt{N}-2)^2r_0^{2\mathtt{N}-2}}{\vert x\vert^{2\mathtt{N}-2}}\mathtt{u}_{\dot{\iota}} +\ell(\vert x\vert)\mathtt{g}_{{\dot{\iota}} }(\mathtt{u}_{{\dot{\iota}} +1})=0,~\mathtt{R}_1<\vert x\vert<\mathtt{R}_2,
\end{equation}	
with one of the following sets of boundary conditions:
\begin{equation}\label{eq131}\left.
	\begin{aligned}
		&\hskip0.9cm \mathtt{u}_{{\dot{\iota}} }=0~~\text{on}~~\vert x\vert=\mathtt{R}_1~\text{and}~\vert x\vert=\mathtt{R}_2,\\
		&\mathtt{u}_{{\dot{\iota}} }=0~~\text{on}~~\vert x\vert=\mathtt{R}_1~\text{and}~\frac{\partial \mathtt{u}_{\dot{\iota}} }{\partial r}=0~\text{on}~\vert x\vert=\mathtt{R}_2,\\
		&\frac{\partial \mathtt{u}_{\dot{\iota}} }{\partial r}=0~~\text{on}~~\vert x\vert=\mathtt{R}_1~\text{and}~\mathtt{u}_{{\dot{\iota}} }=0~\text{on}~\vert x\vert=\mathtt{R}_2,
	\end{aligned}\right\}
\end{equation}
where ${\dot{\iota}} \in\{1,2,3,\cdot\cdot\cdot,\mathtt{n}\},$ $  \mathtt{u}_1=  \mathtt{u}_{\mathtt{n}+1},$ $\triangle\mathtt{u}=\mathtt{div}(\triangledown\mathtt{u}),$ $N>2,$ $\ell=\prod_{i=1}^{m}\ell_i,$ each $\ell_i:(\mathtt{R}_1,\mathtt{R}_2)\to(0,+\infty)$ is continuous, $r^{\mathtt{N}-1}\ell$ is integrable, by an application of various fixed point theorems in a Banach space. Further, we also study existence of unique solution by using Rus's theorem in a complete metric space. 

The study of positive radial solutions to \eqref{eq12} reduces to the study of positive solutions to the following iterative system of two-point boundary value problems,
\begin{equation}\label{eq13}
	\begin{aligned}
		\mathtt{u}''_{{\dot{\iota}} }(\mathtt{s})-r_0^2\mathtt{u}_{\dot{\iota}} (\mathtt{s})+\ell(\mathtt{s})\mathtt{g}_{{\dot{\iota}} }(\mathtt{u}_{{\dot{\iota}} +1}(\mathtt{s}))=0,~0<\mathtt{s}<1,
	\end{aligned}
\end{equation}
where ${\dot{\iota}} \in\{1,2,3,\cdot\cdot\cdot,\mathtt{n}\},$ $  \mathtt{u}_1=  \mathtt{u}_{\mathtt{n}+1},$ $r_0>0$ and $\ell(\mathtt{s})=\frac{r_0^2}{(\mathtt{N}-2)^2}\mathtt{s}^\frac{2(\mathtt{N}-1)}{2-\mathtt{N}}\prod_{i=1}^{m}\ell_i(\mathtt{s}),$ $\ell_i(\mathtt{s})=\ell_i(r_0\mathtt{s}^\frac{1}{2-\mathtt{N}})$ by a Kelvin type transformation through the change of variables $r=\vert x\vert$ and $\mathtt{s}=\left(\frac{r}{r_0}\right)^{2-\mathtt{N}}.$ The detailed explanation of the transformation from the equation \eqref{eq14} to \eqref{eq13} see \cite{ali3, lan, lee}. By suitable choices of nonnegative real numbers $\upalpha, \upbeta, \upgamma$ and $\updelta,$ the set of boundary conditions \eqref{eq13} reduces to
\begin{equation}\label{eq130}\left\{
	\begin{aligned}
		&\upalpha\mathtt{u}_{\dot{\iota}} (0)-\upbeta\mathtt{u}_{\dot{\iota}} '(0)=0,\\
		&\,\upgamma\mathtt{u}_{\dot{\iota}} (1)+\updelta\mathtt{u}_{\dot{\iota}} '(1)=0,
	\end{aligned}\right.
\end{equation}
we assume that the following conditions hold throughout the paper:
\begin{itemize}
	\item [$(\mathcal{J}_1)$] $\mathtt{g}_{\dot{\iota}} :[0,+\infty)\to[0,+\infty)$ is continuous.
	\item [$(\mathcal{J}_2)$] $\ell_i\in L^{\mathtt{p}_i}[0,1], 1\le\mathtt{p}_i\le+\infty$ for $1\le i\le n.$
	\item [$(\mathcal{J}_3)$] There exists $\ell_i^\star>0$ such that $\ell_i^\star<\ell_i(\mathtt{s})<\infty$ a.e. on $[0, 1].$ 
\end{itemize}
The rest of the paper is organized in the following fashion. In Section 2, we convert the boundary value problem \eqref{eq13}--\eqref{eq130} into equivalent integral equation which involves the kernel. Also, we estimate bounds for the kernel which are useful in our main results. In Section 3, we develop a criteria for the existence of atleast one positive radial solution by applying Krasnoselskii’s cone fixed point theorem in a Banach space. In Section 4, we derive necessary conditions for the existence of atleast two positive radial solution by an application of Avery-Henderson cone fixed point theorem in a Banach space. In Section 5, we establish the existence of atleast three positive radial solution by utilizing Legget-William cone fixed point theorem in a Banach space. Further, we also study uniqueness of solution in the final section.
\section{Kernel and Its Bounds}
\noindent In order to study BVP \eqref{eq13}, we first consider the corresponding linear boundary value problem,
\begin{equation}\label{eq14}
		-\mathtt{u}_1''(\mathtt{s})+r_0^2\mathtt{u}_1(\mathtt{s})=u(\mathtt{s}),~0<\mathtt{s}<1,
\end{equation}
\begin{equation}\label{eq141}\left\{
	\begin{aligned}
		&\upalpha\mathtt{u}_1(0)-\upbeta\mathtt{u}_1'(0)=0,\\
		&\,\upgamma\mathtt{u}_1(1)+\updelta\mathtt{u}_1'(1)=0,
	\end{aligned}\right.
\end{equation}
where $u\in \mathcal{C}[0, 1]$ is a given function. 
\begin{lemma}\label{l21} Let $\varrho=r_0^2(\upalpha\updelta+\upbeta\upgamma)\cosh(r_0)+r_0(\upalpha\upgamma+\upbeta\updelta r_0^2)\sinh(r_0).$ For every $u\in \mathcal{C}[0, 1],$ the linear boundary value problem \eqref{eq14}--\eqref{eq141} has a unique solution 
	\begin{equation}\label{eq33}
		\mathtt{u}_1(\mathtt{s})=\int_0^1\Xi_{r_0}(\mathtt{s}, \mathtt{t})u(\mathtt{t})\mathtt{d}\mathtt{t},
	\end{equation}
	where
	$$
	\begin{aligned}
		\Xi_{r_0}(\mathtt{s},\mathtt{t})=\frac{1}{\varrho}\left\{\begin{array}{ll}\big(\upalpha\sinh(r_0\mathtt{s})+\upbeta r_0\cosh(r_0\mathtt{s})\big)\big(\upgamma\sinh(r_0(1-\mathtt{t}))+\updelta r_0\cosh(r_0(1-\mathtt{t}))\big), \hskip0.4cm 0\le \mathtt{s}\le \mathtt{t}\le 1,\vspace{1.2mm}\\	
			\big(\upalpha\sinh(r_0\mathtt{t})+\upbeta r_0\cosh(r_0\mathtt{t})\big)\big(\upgamma\sinh(r_0(1-\mathtt{s}))+\updelta r_0\cosh(r_0(1-\mathtt{s}))\big),
			\hskip0.4cm  0\le \mathtt{t}\le\mathtt{s}\le 1.
		\end{array}
		\right.
	\end{aligned}
	$$
\end{lemma}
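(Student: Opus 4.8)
The plan is to construct the Green's function explicitly and verify it by direct substitution, the standard route for a constant-coefficient self-adjoint two-point problem. The homogeneous equation associated with \eqref{eq14}, namely $\mathtt{u}_1''-r_0^2\mathtt{u}_1=0$, has the linearly independent solutions $\cosh(r_0\mathtt{s})$ and $\sinh(r_0\mathtt{s})$. First I would single out the two combinations
$$
\phi(\mathtt{s})=\upalpha\sinh(r_0\mathtt{s})+\upbeta r_0\cosh(r_0\mathtt{s}),\qquad
\psi(\mathtt{s})=\upgamma\sinh(r_0(1-\mathtt{s}))+\updelta r_0\cosh(r_0(1-\mathtt{s})),
$$
and record the elementary values $\phi(0)=\upbeta r_0$, $\phi'(0)=\upalpha r_0$, $\psi(1)=\updelta r_0$, $\psi'(1)=-\upgamma r_0$, from which $\upalpha\phi(0)-\upbeta\phi'(0)=0$ and $\upgamma\psi(1)+\updelta\psi'(1)=0$. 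Thus $\phi$ satisfies the left-hand condition in \eqref{eq141} and $\psi$ the right-hand one; these are precisely the factors in $\Xi_{r_0}$, so that $\Xi_{r_0}(\mathtt{s},\mathtt{t})=\frac{1}{\varrho}\phi(\min\{\mathtt{s},\mathtt{t}\})\psi(\max\{\mathtt{s},\mathtt{t}\})$.

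The key quantity is the Wronskian $W(\mathtt{s})=\phi(\mathtt{s})\psi'(\mathtt{s})-\phi'(\mathtt{s})\psi(\mathtt{s})$. Since both $\phi$ and $\psi$ solve $\mathtt{u}''=r_0^2\mathtt{u}$ and the leading coefficient is constant, Abel's identity makes $W$ constant; evaluating it at $\mathtt{s}=0$ with the boundary values above yields $W\equiv-\varrho$, with $\varrho$ exactly the constant displayed in the statement. In particular $\varrho\neq 0$ forces $\phi$ and $\psi$ to be linearly independent, so no nontrivial solution of the homogeneous problem can meet both boundary conditions at once; this gives uniqueness once existence is in hand.

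For existence I would write $\mathtt{u}_1(\mathtt{s})=\frac{1}{\varrho}\bigl[\psi(\mathtt{s})\int_0^{\mathtt{s}}\phi(\mathtt{t})u(\mathtt{t})\,\mathtt{d}\mathtt{t}+\phi(\mathtt{s})\int_{\mathtt{s}}^1\psi(\mathtt{t})u(\mathtt{t})\,\mathtt{d}\mathtt{t}\bigr]$ and differentiate twice by the Leibniz rule. At the first differentiation the two endpoint contributions from the variable limits cancel; at the second they combine into $\frac{1}{\varrho}u(\mathtt{s})\bigl(\phi(\mathtt{s})\psi'(\mathtt{s})-\phi'(\mathtt{s})\psi(\mathtt{s})\bigr)=\frac{1}{\varrho}u(\mathtt{s})W=-u(\mathtt{s})$. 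Replacing $\phi''$ by $r_0^2\phi$ and $\psi''$ by $r_0^2\psi$ in the surviving integrals gives $\mathtt{u}_1''(\mathtt{s})=r_0^2\mathtt{u}_1(\mathtt{s})-u(\mathtt{s})$, i.e. $-\mathtt{u}_1''+r_0^2\mathtt{u}_1=u$, as required. Finally, inserting $\mathtt{s}=0$ and $\mathtt{s}=1$ into the simplified expressions for $\mathtt{u}_1$ and $\mathtt{u}_1'$ collapses one empty integral and reduces the boundary conditions to $(\upalpha\phi(0)-\upbeta\phi'(0))\int_0^1\psi u\,\mathtt{d}\mathtt{t}=0$ and $(\upgamma\psi(1)+\updelta\psi'(1))\int_0^1\phi u\,\mathtt{d}\mathtt{t}=0$, both of which vanish by the construction of $\phi$ and $\psi$.

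The only genuinely delicate point is the sign bookkeeping in the Wronskian: one must confirm $W\equiv-\varrho$ rather than $+\varrho$, so that the jump term produced at the second differentiation is exactly $-u(\mathtt{s})$ and matches the $-\mathtt{u}_1''$ convention in \eqref{eq14}. Once that sign is pinned down, everything else is routine differentiation of the kernel.
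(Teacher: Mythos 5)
Your proof is correct, but there is nothing in the paper to compare it against: Lemma \ref{l21} is stated \emph{without} proof, and the proof environment that follows it belongs to Lemma \ref{l22} (the kernel bounds). Your Green's-function construction is the standard argument the paper tacitly relies on, and the computations all check out: your $\phi$ and $\psi$ satisfy, respectively, the left and right conditions of \eqref{eq141}; both solve $\mathtt{w}''=r_0^2\mathtt{w}$, so the Wronskian $\phi\psi'-\phi'\psi$ is constant; and evaluating it at $\mathtt{s}=0$ gives $\upbeta r_0\bigl(-\upgamma r_0\cosh(r_0)-\updelta r_0^{2}\sinh(r_0)\bigr)-\upalpha r_0\bigl(\upgamma\sinh(r_0)+\updelta r_0\cosh(r_0)\bigr)=-\varrho$, confirming the sign you flagged as the delicate point, so the jump produced at the second differentiation is exactly $-u(\mathtt{s})$ and \eqref{eq14} holds with the paper's sign convention. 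Two points you leave implicit are worth naming, though neither is a real gap: first, $\varrho\neq 0$ is nowhere an explicit hypothesis, but it is forced by the factor $1/\varrho$ in the kernel and holds automatically when $r_0>0$ and neither $(\upalpha,\upbeta)$ nor $(\upgamma,\updelta)$ is the zero pair, since all four terms of $\varrho$ are nonnegative; second, your uniqueness step silently uses that any homogeneous solution obeying the left condition has Cauchy data proportional to $(\upbeta r_0,\upalpha r_0)$ and hence is a scalar multiple of $\phi$ by uniqueness for the initial value problem (and similarly a multiple of $\psi$ on the right), which is what converts linear independence of $\phi,\psi$ into the vanishing of the difference of two solutions.
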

\begin{lemma}\label{l22} Let $\displaystyle\wp=\max\left\{\frac{\upbeta r_0}{\upalpha\sinh(r_0)+\upbeta r_0\cosh(r_0)},\frac{\updelta r_0}{\upgamma\sinh(r_0)+\updelta r_0\cosh(r_0)}\right\}.$
	The kernel $\Xi_{r_0}(\mathtt{s},\mathtt{t})$ has the following properties:
	\begin{itemize}
		\item[(i)] $\Xi_{r_0}(\mathtt{s},\mathtt{t})$ is nonnegative and continuous on $[0, 1] \times [0, 1],$
		\item[(ii)] $\Xi_{r_0}(\mathtt{s},\mathtt{t})\leq \Xi_{r_0}(\mathtt{t},\mathtt{t})$ for $\mathtt{s},\mathtt{t}\in[0, 1],$
		\item[(iii)] $\wp \Xi_{r_0}(\mathtt{t},\mathtt{t})\leq \Xi_{r_0}(\mathtt{s},\mathtt{t})$ for $\mathtt{s}, \mathtt{t}\in [0, 1].$
	\end{itemize} 
\end{lemma}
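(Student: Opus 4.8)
The plan is to exploit the product structure of the kernel. I would set $\phi(\mathtt{s})=\upalpha\sinh(r_0\mathtt{s})+\upbeta r_0\cosh(r_0\mathtt{s})$ and $\psi(\mathtt{s})=\upgamma\sinh(r_0(1-\mathtt{s}))+\updelta r_0\cosh(r_0(1-\mathtt{s}))$, so that
\[
\Xi_{r_0}(\mathtt{s},\mathtt{t})=\frac{1}{\varrho}\begin{cases}\phi(\mathtt{s})\,\psi(\mathtt{t}), & 0\le\mathtt{s}\le\mathtt{t}\le1,\\[1mm] \phi(\mathtt{t})\,\psi(\mathtt{s}), & 0\le\mathtt{t}\le\mathtt{s}\le1.\end{cases}
\]
Before addressing the three properties I would record the elementary facts that drive the whole argument. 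Since $\upalpha,\upbeta,\upgamma,\updelta\ge0$ and $\sinh,\cosh\ge0$ on $[0,1]$, both $\phi$ and $\psi$ are nonnegative on $[0,1]$, and $\varrho>0$. Differentiating gives $\phi'(\mathtt{s})=r_0\big(\upalpha\cosh(r_0\mathtt{s})+\upbeta r_0\sinh(r_0\mathtt{s})\big)\ge0$ and $\psi'(\mathtt{s})=-r_0\big(\upgamma\cosh(r_0(1-\mathtt{s}))+\updelta r_0\sinh(r_0(1-\mathtt{s}))\big)\le0$, so $\phi$ is nondecreasing and $\psi$ is nonincreasing on $[0,1]$.

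With these in hand, (i) and (ii) are short. For (i), each branch is a product of continuous functions, and the two branches agree on the diagonal (both equal $\varrho^{-1}\phi(\mathtt{t})\psi(\mathtt{t})$), so $\Xi_{r_0}$ is continuous on $[0,1]\times[0,1]$; nonnegativity follows from $\phi,\psi\ge0$ and $\varrho>0$. For (ii), I would fix $\mathtt{t}$ and compare with $\Xi_{r_0}(\mathtt{t},\mathtt{t})=\varrho^{-1}\phi(\mathtt{t})\psi(\mathtt{t})$: on $\mathtt{s}\le\mathtt{t}$ only the factor $\phi(\mathtt{s})$ varies and $\phi(\mathtt{s})\le\phi(\mathtt{t})$, while on $\mathtt{s}\ge\mathtt{t}$ only $\psi(\mathtt{s})$ varies and $\psi(\mathtt{s})\le\psi(\mathtt{t})$; in either case $\Xi_{r_0}(\mathtt{s},\mathtt{t})\le\Xi_{r_0}(\mathtt{t},\mathtt{t})$.

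The substance is in (iii), where I would again work one triangle at a time through the ratio $\Xi_{r_0}(\mathtt{s},\mathtt{t})/\Xi_{r_0}(\mathtt{t},\mathtt{t})$. On $\mathtt{s}\le\mathtt{t}$ this ratio equals $\phi(\mathtt{s})/\phi(\mathtt{t})$, which, because $\phi$ is nondecreasing and positive, is extremized at the corner $(\mathtt{s},\mathtt{t})=(0,1)$ and so is bounded below by $\phi(0)/\phi(1)=\upbeta r_0/\big(\upalpha\sinh(r_0)+\upbeta r_0\cosh(r_0)\big)$. On $\mathtt{s}\ge\mathtt{t}$ it equals $\psi(\mathtt{s})/\psi(\mathtt{t})$ and is bounded below by $\psi(1)/\psi(0)=\updelta r_0/\big(\upgamma\sinh(r_0)+\updelta r_0\cosh(r_0)\big)$. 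Combining the two triangles, the ratio is controlled from below by the constant $\wp$ assembled from these two boundary quotients, which is exactly the assertion $\wp\,\Xi_{r_0}(\mathtt{t},\mathtt{t})\le\Xi_{r_0}(\mathtt{s},\mathtt{t})$. The main obstacle is precisely this step: one must verify that the monotone ratios attain their extrema at the corners $(0,1)$ and $(1,0)$ and that a single constant $\wp$ governs both triangular regions simultaneously, and here the positivity of $\phi$, $\psi$ and $\varrho$ established at the outset is what makes the corner evaluation legitimate.
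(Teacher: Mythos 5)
Your decomposition $\Xi_{r_0}(\mathtt{s},\mathtt{t})=\varrho^{-1}\phi(\mathtt{s})\psi(\mathtt{t})$ on $\mathtt{s}\le\mathtt{t}$ (and $\varrho^{-1}\phi(\mathtt{t})\psi(\mathtt{s})$ on $\mathtt{t}\le\mathtt{s}$) together with the triangle-by-triangle ratio analysis is exactly the paper's argument; your version is in fact more careful, since you establish the monotonicity of $\phi$ and $\psi$ by differentiation and verify continuity across the diagonal, both of which the paper leaves implicit.

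However, your final "combining the two triangles" step is a genuine leap --- and it is the same leap the paper's own proof makes. On $\mathtt{s}\le\mathtt{t}$ you prove the ratio is bounded below by $A:=\upbeta r_0/\big(\upalpha\sinh(r_0)+\upbeta r_0\cosh(r_0)\big)$, and on $\mathtt{s}\ge\mathtt{t}$ by $B:=\updelta r_0/\big(\upgamma\sinh(r_0)+\updelta r_0\cosh(r_0)\big)$. From these two facts the only global conclusion is that the ratio is at least $\min\{A,B\}$; but the statement defines $\wp=\max\{A,B\}$, so the asserted inequality $\wp\,\Xi_{r_0}(\mathtt{t},\mathtt{t})\le\Xi_{r_0}(\mathtt{s},\mathtt{t})$ does not follow from your two bounds. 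Indeed, with the max the claim is false whenever $A\ne B$: take $\upbeta=0$ and $\upalpha,\upgamma,\updelta>0$, so that $A=0$ and $\wp=B>0$; then $\Xi_{r_0}(0,1)=\varrho^{-1}\phi(0)\psi(1)=0$ while $\wp\,\Xi_{r_0}(1,1)=\varrho^{-1}B\,\upalpha\sinh(r_0)\,\updelta r_0>0$, violating (iii). What your argument (and the paper's) actually proves is (iii) with $\wp$ replaced by $\min\{A,B\}$; the clean repair is to flag the definition of $\wp$ as an error --- max should be min --- rather than to assert that the corner bounds on the two separate triangles combine into the larger of the two constants.
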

\begin{proof}
	From the definition of kernel $\Xi_{r_0}(\mathtt{s}, \mathtt{t}),$ it is clear that $(i)$ holds. To prove $(ii),$ consider
	$$
	\begin{aligned}
	\frac{\Xi_{r_0}(\mathtt{s},\mathtt{t})}{\Xi_{r_0}(\mathtt{t},\mathtt{t})}=&
	\left\{\begin{array}{ll}\displaystyle\frac{\upalpha\sinh(r_0\mathtt{s})+\upbeta r_0\cosh(r_0\mathtt{s})}{\upalpha\sinh(r_0\mathtt{t})+\upbeta r_0\cosh(r_0\mathtt{t})}, \hskip2.045cm 0 \le \mathtt{s}\le \mathtt{t}\le 1,\vspace{1.2mm}\\	
		\displaystyle\frac{\upgamma\sinh(r_0(1-\mathtt{s}))+\updelta r_0\cosh(r_0(1-\mathtt{s}))}{\upgamma\sinh(r_0(1-\mathtt{t}))+\updelta r_0\cosh(r_0(1-\mathtt{t}))},
		\hskip0.4cm  0 \le \mathtt{t}\le \mathtt{s}\le 1,
	\end{array}
	\right.\\
	\le&
	\left\{\begin{array}{ll}\displaystyle1, \hskip6.05cm 0 \le \mathtt{s}\le \mathtt{t}\le 1,\vspace{1.2mm}\\	
	1,\hskip6.05cm  0 \le \mathtt{t}\le \mathtt{s}\le 1,
	\end{array}
	\right.
	\end{aligned}
	$$
	which proves $(ii).$ Finally for $(iii),$ consider
	$$
	\begin{aligned}
		\frac{\Xi_{r_0}(\mathtt{s},\mathtt{t})}{\Xi_{r_0}(\mathtt{t},\mathtt{t})}=&
		\left\{\begin{array}{ll}\displaystyle\frac{\upalpha\sinh(r_0\mathtt{s})+\upbeta r_0\cosh(r_0\mathtt{s})}{\upalpha\sinh(r_0\mathtt{t})+\upbeta r_0\cosh(r_0\mathtt{t})}, \hskip2.45cm 0 \le \mathtt{s}\le \mathtt{t}\le 1,\vspace{1.2mm}\\	
			\displaystyle\frac{\upgamma\sinh(r_0(1-\mathtt{s}))+\updelta r_0\cosh(r_0(1-\mathtt{s}))}{\upgamma\sinh(r_0(1-\mathtt{t}))+\updelta r_0\cosh(r_0(1-\mathtt{t}))},
			\hskip0.8cm  0 \le \mathtt{t}\le \mathtt{s}\le 1,
		\end{array}
		\right.\\
		\ge&
		\left\{\begin{array}{ll}\displaystyle\frac{\upbeta r_0}{\upalpha\sinh(r_0)+\upbeta r_0\cosh(r_0)}, \hskip2.82cm 0 \le \mathtt{s}\le \mathtt{t}\le 1,\vspace{1.2mm}\\	
			\displaystyle\frac{\updelta r_0}{\upgamma\sinh(r_0)+\updelta r_0\cosh(r_0)},
			\hskip2.87cm  0 \le \mathtt{t}\le \mathtt{s}\le 1.
		\end{array}
		\right.
	\end{aligned}
	$$ 	
	This completes the proof.     \qed
\end{proof}	

From Lemma \ref{l21}, we note that an $\mathtt{n}$-tuple $( \mathtt{u}_1, \mathtt{u}_2,\cdot\cdot\cdot, \mathtt{u}_\mathtt{n})$ is a solution of the boundary value problem \eqref{eq13}--\eqref{eq130} if and only, if
$$
\begin{aligned}
	\mathtt{u}_1(\mathtt{s})=&\,\int_{0}^{1}\Xi_{r_0}(\mathtt{s}, \mathtt{t}_1)\ell(\mathtt{t}_1)\mathtt{g}_1\Bigg[\int_{0}^{1}\Xi_{r_0}(\mathtt{t}_1, \mathtt{t}_2)\ell(\mathtt{t}_2)\mathtt{g}_2\Bigg[\int_{0}^{1}\Xi_{r_0}(\mathtt{t}_2, \mathtt{t}_3)\ell(\mathtt{t}_3)\mathtt{g}_4 \cdots\\
	&\mathtt{g}_{\mathtt{n}-1}\Bigg[\int_{0}^{1}\Xi_{r_0}(\mathtt{t}_{\mathtt{n}-1}, \mathtt{t}_\mathtt{n})\ell(\mathtt{t}_\mathtt{n})\mathtt{g}_\mathtt{n}\big( \mathtt{u}_1(\mathtt{t}_\mathtt{n})\big)\mathtt{d}\mathtt{t}_\mathtt{n}\Bigg]\cdot\cdot\cdot \Bigg]\mathtt{d}\mathtt{t}_3\Bigg]\mathtt{d}\mathtt{t}_2\Bigg]\mathtt{d}\mathtt{t}_1.
\end{aligned}
$$
In general,
$$
\begin{aligned}
\mathtt{u}_{\dot{\iota}} (\mathtt{s})=&\,\int_{0}^{1}\Xi_{r_0}(\mathtt{s}, s)\ell(s)\mathtt{g}_{\dot{\iota}} \big( \mathtt{u}_{{\dot{\iota}} +1}(s)\big)ds,~{\dot{\iota}} =1,2,3,\cdot\cdot\cdot,{\mathtt{n}},\\
\mathtt{u}_1(\mathtt{s})=&\, \mathtt{u}_{\mathtt{n}+1}(\mathtt{s}).
\end{aligned}
$$

We denote the Banach space $\mathtt{C}((0, 1),\mathbb{R})$ by $\mathtt{B}$ with the norm $\Vert  \mathtt{u}\Vert=\displaystyle\max_{\mathtt{s}\in[0,1]}\vert  \mathtt{u}(\mathtt{s})\vert.$  The cone $\mathtt{E} \subset \mathtt{B}$ is defined by
$$\mathtt{E}=\Big\{  \mathtt{u}\in \mathtt{B} :  \mathtt{u}(\mathtt{s})\ge0~\text{on}~[0,1]~\text{and}~
\min_{\mathtt{s}\in {[0,\,1]}} \mathtt{u}(\mathtt{s})\geq \wp\Vert  \mathtt{u}\Vert\Big\}.$$
For any $ \mathtt{u}_1\in \mathtt{E},$ define an operator $\aleph :\mathtt{E}\rightarrow \mathtt{B}$ by

\begin{align}
	(\aleph  \mathtt{u}_1)(\mathtt{s})=&\,\int_{0}^{1}\Xi_{r_0}(\mathtt{s}, \mathtt{t}_1)\ell(\mathtt{t}_1)\mathtt{g}_1\Bigg[\int_{0}^{1}\Xi_{r_0}(\mathtt{t}_1, \mathtt{t}_2)\ell(\mathtt{t}_2)\mathtt{g}_2\Bigg[\int_{0}^{1}\Xi_{r_0}(\mathtt{t}_2, \mathtt{t}_3)\ell(\mathtt{t}_3)\mathtt{g}_4 \cdots\nonumber\\
	&\mathtt{g}_{\mathtt{n}-1}\Bigg[\int_{0}^{1}\Xi_{r_0}(\mathtt{t}_{\mathtt{n}-1}, \mathtt{t}_\mathtt{n})\ell(\mathtt{t}_\mathtt{n})\mathtt{g}_\mathtt{n}\big( \mathtt{u}_1(\mathtt{t}_\mathtt{n})\big)\mathtt{d}\mathtt{t}_\mathtt{n}\Bigg]\cdot\cdot\cdot \Bigg]\mathtt{d}\mathtt{t}_3\Bigg]\mathtt{d}\mathtt{t}_2\Bigg]\mathtt{d}\mathtt{t}_1.\label{eqmain}
\end{align}

\begin{lemma}\label{l24}
	$\aleph (\mathtt{E})\subset \mathtt{E}$ and $\aleph :\mathtt{E}\rightarrow \mathtt{E}$ is completely continuous.
\end{lemma}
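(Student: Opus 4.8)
The plan is to treat the statement in two parts: the cone invariance $\aleph(\mathtt{E})\subset\mathtt{E}$, and then complete continuity by way of the Arzel\`a--Ascoli theorem. The organizing observation throughout is that among all the nested kernels in \eqref{eqmain}, only the outermost factor $\Xi_{r_0}(\mathtt{s},\mathtt{t}_1)$ depends on $\mathtt{s}$. Writing $(\aleph\mathtt{u}_1)(\mathtt{s})=\int_0^1\Xi_{r_0}(\mathtt{s},\mathtt{t}_1)\ell(\mathtt{t}_1)\Psi(\mathtt{t}_1)\,\mathtt{d}\mathtt{t}_1$, where $\Psi(\mathtt{t}_1)$ abbreviates the remaining $\mathtt{s}$-independent nested integral, the nonnegativity of $\aleph\mathtt{u}_1$ on $[0,1]$ is immediate from part $(i)$ of Lemma \ref{l22}, the positivity of $\ell$, and the sign condition $(\mathcal{J}_1)$, since $\Psi\ge 0$ as well.

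For the cone condition I apply parts $(ii)$ and $(iii)$ of Lemma \ref{l22} to the outer kernel alone. From $\Xi_{r_0}(\mathtt{s},\mathtt{t}_1)\le\Xi_{r_0}(\mathtt{t}_1,\mathtt{t}_1)$ I obtain the upper bound $\Vert\aleph\mathtt{u}_1\Vert\le\int_0^1\Xi_{r_0}(\mathtt{t}_1,\mathtt{t}_1)\ell(\mathtt{t}_1)\Psi(\mathtt{t}_1)\,\mathtt{d}\mathtt{t}_1$, while $\wp\,\Xi_{r_0}(\mathtt{t}_1,\mathtt{t}_1)\le\Xi_{r_0}(\mathtt{s},\mathtt{t}_1)$ yields the pointwise lower bound $(\aleph\mathtt{u}_1)(\mathtt{s})\ge\wp\int_0^1\Xi_{r_0}(\mathtt{t}_1,\mathtt{t}_1)\ell(\mathtt{t}_1)\Psi(\mathtt{t}_1)\,\mathtt{d}\mathtt{t}_1$ for every $\mathtt{s}$. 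Combining the two gives $\min_{\mathtt{s}\in[0,1]}(\aleph\mathtt{u}_1)(\mathtt{s})\ge\wp\Vert\aleph\mathtt{u}_1\Vert$, so $\aleph\mathtt{u}_1\in\mathtt{E}$ and hence $\aleph(\mathtt{E})\subset\mathtt{E}$.

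For complete continuity I fix a bounded set in $\mathtt{E}$, say $\Vert\mathtt{u}_1\Vert\le L$. Each $\mathtt{g}_{\dot{\iota}}$ is continuous, hence bounded on $[0,L]$; working outward from the innermost integral and using that $\Xi_{r_0}$ is bounded on $[0,1]\times[0,1]$ together with the integrability of $\ell$ (invoking H\"older's inequality with $(\mathcal{J}_2)$ to control $\int_0^1\Xi_{r_0}\,\ell$), I bound each successive layer and conclude that $\Psi$, and therefore $\aleph(\Omega)$, is uniformly bounded. Equicontinuity follows because the $\mathtt{s}$-dependence is confined to $\Xi_{r_0}(\mathtt{s},\mathtt{t}_1)$, which is uniformly continuous on $[0,1]\times[0,1]$: the estimate $\vert(\aleph\mathtt{u}_1)(\mathtt{s})-(\aleph\mathtt{u}_1)(\mathtt{s}')\vert\le\int_0^1\vert\Xi_{r_0}(\mathtt{s},\mathtt{t}_1)-\Xi_{r_0}(\mathtt{s}',\mathtt{t}_1)\vert\ell(\mathtt{t}_1)\Psi(\mathtt{t}_1)\,\mathtt{d}\mathtt{t}_1$ is made small uniformly in $\mathtt{u}_1$. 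Arzel\`a--Ascoli then gives relative compactness of $\aleph(\Omega)$, and continuity of $\aleph$ itself follows from continuity of the $\mathtt{g}_{\dot{\iota}}$ propagated through the layers via the dominated convergence theorem.

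The main obstacle I anticipate is purely the bookkeeping of the $\mathtt{n}$ nested integral layers: ensuring that the boundedness and H\"older estimates pass cleanly from the innermost integral to the outermost so that $\Psi$ is controlled. The genuinely geometric inputs---the diagonal kernel bounds for invariance and the uniform continuity of $\Xi_{r_0}$ for equicontinuity---are routine once one recognizes that the $\mathtt{s}$-dependence lives only in the outer kernel.
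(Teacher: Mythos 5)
Your proposal is correct and follows essentially the same route as the paper: cone invariance is obtained exactly as in the paper's proof, by applying properties $(ii)$ and $(iii)$ of Lemma \ref{l22} to the outermost kernel $\Xi_{r_0}(\mathtt{s},\mathtt{t}_1)$ (the only $\mathtt{s}$-dependent factor) to get $\min_{\mathtt{s}\in[0,1]}(\aleph\mathtt{u}_1)(\mathtt{s})\ge\wp\Vert\aleph\mathtt{u}_1\Vert$, and complete continuity via Arzel\`a--Ascoli. In fact your treatment of the second part (uniform boundedness by bootstrapping through the layers, equicontinuity from uniform continuity of $\Xi_{r_0}$, continuity via dominated convergence) supplies details that the paper compresses into a single sentence invoking Arzel\`a--Ascoli.
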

\begin{proof} 
	 Since $\mathtt{g}_{\dot{\iota}} (\mathtt{u}_{{\dot{\iota}} +1}(\mathtt{s}))$ is nonnegative for $\mathtt{s}\in [0, 1],$ $ \mathtt{u}_1 \in \mathtt{E}.$	Since $\Xi_{r_0}(\mathtt{s}, s),$ is nonnegative for all $\mathtt{s}, \mathtt{t} \in [0, 1],$ it follows that
	$\aleph ( \mathtt{u}_1(\mathtt{s}))\geq 0$ for all $\mathtt{s}\in [0, 1],\, \mathtt{u}_1 \in \mathtt{E}.$ Now, by Lemma \ref{l21} and \ref{l22}, we have
	$$
	\begin{aligned}
		\min_{\mathtt{s}\in[0,1]}(\aleph\mathtt{u}_1)(\mathtt{s})
		&=\,\min_{\mathtt{s}\in[0,1]}\Bigg\{\int_{0}^{1}\Xi_{r_0}(\mathtt{s}, \mathtt{t}_1)\ell(\mathtt{t}_1)\mathtt{g}_1\Bigg[\int_{0}^{1}\Xi_{r_0}(\mathtt{t}_1, \mathtt{t}_2)\ell(\mathtt{t}_2)\mathtt{g}_2\Bigg[\int_{0}^{1}\Xi_{r_0}(\mathtt{t}_2, \mathtt{t}_3)\ell(\mathtt{t}_3)\mathtt{g}_4 \cdots\\
		&\hskip1.4cm\mathtt{g}_{\mathtt{n}-1}\Bigg[\int_{0}^{1}\Xi_{r_0}(\mathtt{t}_{\mathtt{n}-1}, \mathtt{t}_\mathtt{n})\ell(\mathtt{t}_\mathtt{n})\mathtt{g}_\mathtt{n}\big( \mathtt{u}_1(\mathtt{t}_\mathtt{n})\big)\mathtt{d}\mathtt{t}_\mathtt{n}\Bigg]\cdot\cdot\cdot \Bigg]\mathtt{d}\mathtt{t}_3\Bigg]\mathtt{d}\mathtt{t}_2\Bigg]\mathtt{d}\mathtt{t}_1\Bigg\}\\
		&\ge\wp\Bigg\{\int_{0}^{1}\Xi_{r_0}(\mathtt{t}_1, \mathtt{t}_1)\ell(\mathtt{t}_1)\mathtt{g}_1\Bigg[\int_{0}^{1}\Xi_{r_0}(\mathtt{t}_1, \mathtt{t}_2)\ell(\mathtt{t}_2)\mathtt{g}_2\Bigg[\int_{0}^{1}\Xi_{r_0}(\mathtt{t}_2, \mathtt{t}_3)\ell(\mathtt{t}_3)\mathtt{g}_4 \cdots\\
		&\hskip1.4cm\mathtt{g}_{\mathtt{n}-1}\Bigg[\int_{0}^{1}\Xi_{r_0}(\mathtt{t}_{\mathtt{n}-1}, \mathtt{t}_\mathtt{n})\ell(\mathtt{t}_\mathtt{n})\mathtt{g}_\mathtt{n}\big( \mathtt{u}_1(\mathtt{t}_\mathtt{n})\big)\mathtt{d}\mathtt{t}_\mathtt{n}\Bigg]\cdot\cdot\cdot \Bigg]\mathtt{d}\mathtt{t}_3\Bigg]\mathtt{d}\mathtt{t}_2\Bigg]\mathtt{d}\mathtt{t}_1\Bigg\}\\
		&\ge\wp\Bigg\{\int_{0}^{1}\Xi_{r_0}(\mathtt{s}, \mathtt{t}_1)\ell(\mathtt{t}_1)\mathtt{g}_1\Bigg[\int_{0}^{1}\Xi_{r_0}(\mathtt{t}_1, \mathtt{t}_2)\ell(\mathtt{t}_2)\mathtt{g}_2\Bigg[\int_{0}^{1}\Xi_{r_0}(\mathtt{t}_2, \mathtt{t}_3)\ell(\mathtt{t}_3)\mathtt{g}_4 \cdots\\
		&\hskip1.4cm\mathtt{g}_{\mathtt{n}-1}\Bigg[\int_{0}^{1}\Xi_{r_0}(\mathtt{t}_{\mathtt{n}-1}, \mathtt{t}_\mathtt{n})\ell(\mathtt{t}_\mathtt{n})\mathtt{g}_\mathtt{n}\big( \mathtt{u}_1(\mathtt{t}_\mathtt{n})\big)\mathtt{d}\mathtt{t}_\mathtt{n}\Bigg]\cdot\cdot\cdot \Bigg]\mathtt{d}\mathtt{t}_3\Bigg]\mathtt{d}\mathtt{t}_2\Bigg]\mathtt{d}\mathtt{t}_1\Bigg\}\\
		&\ge\wp\,\max_{\mathtt{s}\in[0,1]}\,\vert \aleph   \mathtt{u}_1(\mathtt{s})\vert.
	\end{aligned}
	$$
	Thus $\aleph (\mathtt{E})\subset \mathtt{E}.$ Therefore, by the means of Arzela-Ascoli theorem, the operator $\aleph $ is completely continuous. \qed
\end{proof}
\section{Existence of at Least  One Positive Radial Solution}
In this section, we establish the existence of at least one positive radial solution for the system \eqref{eq13}--\eqref{eq130} by an application of following theorems. 
\begin{theorem}\cite{guo}\label{t41}
	Let $\mathtt{E}$ be a cone in a Banach space $\mathtt{B}$ and let $\mathtt{G},\, \mathtt{F}$ be open sets with $0\in\mathtt{G}, \overline{\mathtt{G}}\subset \mathtt{F}.$ Let $\aleph :\mathtt{E}\cap(\overline{\mathtt{F}}\backslash \mathtt{G})\rightarrow \mathtt{E}$ be a completely continuous operator such that 
	\begin{itemize}
		\item[(i)] $\Vert \aleph  \mathtt{u}\Vert\ge\Vert  \mathtt{u}\Vert,\, \mathtt{u}\in \mathtt{E}\cap\partial\mathtt{G},$ and $\Vert \aleph  \mathtt{u}\Vert\le\Vert  \mathtt{u}\Vert,\,  \mathtt{u}\in \mathtt{E}\cap\partial\mathtt{F},$ or
		\item[(ii)] $\Vert \aleph  \mathtt{u}\Vert\le\Vert  \mathtt{u}\Vert,\,  \mathtt{u}\in \mathtt{E}\cap\partial\mathtt{G},$ and $\Vert \aleph  \mathtt{u}\Vert\ge\Vert  \mathtt{u}\Vert,\,  \mathtt{u}\in \mathtt{E}\cap\partial\mathtt{F}.$ 
	\end{itemize}
	Then $\aleph $ has a fixed point in $\mathtt{E}\cap(\overline{\mathtt{F}}\backslash \mathtt{E}).$
\end{theorem}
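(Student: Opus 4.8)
The statement is the Guo--Krasnoselskii cone compression--expansion theorem, and since $\aleph$ is assumed completely continuous only on the \emph{annular} set $\mathtt{E}\cap(\overline{\mathtt{F}}\setminus\mathtt{G})$, the natural instrument is the fixed point index $i(\aleph,\mathtt{E}\cap\Omega,\mathtt{E})$ for completely continuous self-maps of the cone $\mathtt{E}$. The plan is first to recall (or construct, by composing a Leray--Schauder degree with a retraction of $\mathtt{B}$ onto $\mathtt{E}$) this index together with its four standard properties: normalization, additivity/excision, homotopy invariance, and the solution property that a nonzero index forces a fixed point in the relevant open set. Throughout I may assume $\aleph\mathtt{u}\neq\mathtt{u}$ on both $\mathtt{E}\cap\partial\mathtt{G}$ and $\mathtt{E}\cap\partial\mathtt{F}$, since a boundary fixed point would already prove the theorem.

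The heart of the argument is a pair of index computations on a ball-type set $\mathtt{E}\cap\Omega$ with $0\in\Omega$. \emph{Compression:} if $\Vert\aleph\mathtt{u}\Vert\le\Vert\mathtt{u}\Vert$ for $\mathtt{u}\in\mathtt{E}\cap\partial\Omega$, then $i(\aleph,\mathtt{E}\cap\Omega,\mathtt{E})=1$. I would prove this via the admissible homotopy $\mathtt{H}(t,\mathtt{u})=t\,\aleph\mathtt{u}$, $t\in[0,1]$: a boundary coincidence $\mathtt{u}=t\aleph\mathtt{u}$ forces $\Vert\mathtt{u}\Vert=t\Vert\aleph\mathtt{u}\Vert\le t\Vert\mathtt{u}\Vert$, impossible for $t<1$ on $\partial\Omega$ and excluded at $t=1$, so homotopy invariance and normalization give $i=i(\mathbf{0},\mathtt{E}\cap\Omega,\mathtt{E})=1$. \emph{Expansion:} if $\Vert\aleph\mathtt{u}\Vert\ge\Vert\mathtt{u}\Vert$ for $\mathtt{u}\in\mathtt{E}\cap\partial\Omega$, then $i(\aleph,\mathtt{E}\cap\Omega,\mathtt{E})=0$. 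Here I fix a direction $\mathtt{e}\in\mathtt{E}\setminus\{0\}$ and use the translation homotopy $\mathtt{H}(t,\mathtt{u})=\aleph\mathtt{u}+t\mathtt{e}$, $t\ge0$: complete continuity makes $\aleph(\mathtt{E}\cap\overline{\Omega})$ bounded, so for $t$ large the equation $\mathtt{u}=\aleph\mathtt{u}+t\mathtt{e}$ has no solution in $\mathtt{E}\cap\overline{\Omega}$ (otherwise $t\Vert\mathtt{e}\Vert=\Vert\mathtt{u}-\aleph\mathtt{u}\Vert$ would stay bounded), whence the terminal map is fixed-point-free with index $0$, and homotopy invariance carries this back to $\aleph$.

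Granting these two lemmas, the theorem follows by excision. In case (i) the expansion inequality on the inner boundary $\mathtt{E}\cap\partial\mathtt{G}$ gives $i(\aleph,\mathtt{E}\cap\mathtt{G},\mathtt{E})=0$, while the compression inequality on the outer boundary $\mathtt{E}\cap\partial\mathtt{F}$ gives $i(\aleph,\mathtt{E}\cap\mathtt{F},\mathtt{E})=1$; case (ii) is symmetric with the two values interchanged. Since $\overline{\mathtt{G}}\subset\mathtt{F}$ and there are no coincidences on the separating boundaries, additivity yields $i(\aleph,\mathtt{E}\cap(\mathtt{F}\setminus\overline{\mathtt{G}}),\mathtt{E})=i(\aleph,\mathtt{E}\cap\mathtt{F},\mathtt{E})-i(\aleph,\mathtt{E}\cap\mathtt{G},\mathtt{E})=\pm1\neq0$, and the solution property then produces a fixed point of $\aleph$ in $\mathtt{E}\cap(\overline{\mathtt{F}}\setminus\mathtt{G})$, as claimed.

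The main obstacle is the expansion computation $i=0$, specifically the admissibility of $\mathtt{H}(t,\mathtt{u})=\aleph\mathtt{u}+t\mathtt{e}$ on $\mathtt{E}\cap\partial\Omega$ for the \emph{intermediate} values $t>0$. A boundary coincidence $\mathtt{u}=\aleph\mathtt{u}+t\mathtt{e}$ must be ruled out by combining the expansion inequality with the order structure of the cone and the boundedness of $\aleph$; the estimate $\Vert\aleph\mathtt{u}+t\mathtt{e}\Vert\ge\Vert\aleph\mathtt{u}\Vert\ge\Vert\mathtt{u}\Vert$, available because adding a nonnegative function to $\aleph\mathtt{u}\in\mathtt{E}$ cannot decrease the maximum, is the cone-specific ingredient that has to be pinned down carefully. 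Once the two index values are secured, the passage from hypotheses (i)/(ii) to the nonvanishing annular index is routine bookkeeping with the index axioms.
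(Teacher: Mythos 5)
Your overall route is the right one, and it is the same route as in the source the paper cites for this statement: the paper itself gives no proof of Theorem \ref{t41} (it is quoted from \cite{guo} and used as a black box), and the classical proof there is exactly the index-theoretic scheme you describe. Your compression computation is correct in full generality: for $H(t,\mathtt{u})=t\aleph\mathtt{u}$, a boundary coincidence gives $\Vert\mathtt{u}\Vert=t\Vert\aleph\mathtt{u}\Vert\le t\Vert\mathtt{u}\Vert<\Vert\mathtt{u}\Vert$ (recall $\Vert\mathtt{u}\Vert\ge\delta>0$ on the boundary because $0$ is an interior point), and the final additivity bookkeeping is also fine. One technical omission before the main issue: $\aleph$ is defined only on $\mathtt{E}\cap(\overline{\mathtt{F}}\setminus\mathtt{G})$, so the symbols $i(\aleph,\mathtt{E}\cap\mathtt{G},\mathtt{E})$ and $i(\aleph,\mathtt{E}\cap\mathtt{F},\mathtt{E})$ are not yet meaningful; you must first extend $\aleph$ to a completely continuous map $\widetilde{\aleph}:\mathtt{E}\cap\overline{\mathtt{F}}\to\mathtt{E}$ (Dugundji's extension theorem, whose extension takes values in the closed convex set $\mathtt{E}$), note that $\widetilde{\aleph}$ inherits the boundary inequalities because it agrees with $\aleph$ on both boundaries, and observe at the end that a fixed point of $\widetilde{\aleph}$ in the open annulus is a fixed point of $\aleph$.

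The genuine gap is precisely at the point you yourself flag as the main obstacle, and your proposed resolution does not close it. You rule out a boundary coincidence $\mathtt{u}=\aleph\mathtt{u}+t\mathtt{e}$, $t>0$, by the estimate $\Vert\aleph\mathtt{u}+t\mathtt{e}\Vert\ge\Vert\aleph\mathtt{u}\Vert$, justified as ``adding a nonnegative function cannot decrease the maximum.'' First, this inequality is \emph{false} for a general cone in a general Banach space, which is the setting of the theorem as stated: in $C^1[0,1]$ with $\Vert f\Vert=\Vert f\Vert_\infty+\Vert f'\Vert_\infty$ and the cone of pointwise nonnegative functions, take $v(x)=1-x$ and $w(x)=x$; then $\Vert v\Vert=2$ while $\Vert v+w\Vert=\Vert 1\Vert=1$, so adding a cone element can strictly decrease the norm. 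The property you invoke is a feature of cones with monotone norm (such as nonnegative continuous functions under the sup norm, the cone actually used later in the paper), not of cones in Banach spaces per se. Second, even in the sup-norm setting your chain $\Vert\mathtt{u}\Vert=\Vert\aleph\mathtt{u}+t\mathtt{e}\Vert\ge\Vert\aleph\mathtt{u}\Vert\ge\Vert\mathtt{u}\Vert$ terminates in $\Vert\mathtt{u}\Vert\ge\Vert\mathtt{u}\Vert$, which is no contradiction at all: equality is perfectly possible, e.g.\ when $\mathtt{e}$ vanishes at the point where $\aleph\mathtt{u}$ attains its maximum. To make the argument work there you must choose $\mathtt{e}$ bounded away from zero, say $\mathtt{e}\equiv 1$, so that $\Vert\aleph\mathtt{u}+t\mathtt{e}\Vert_\infty=\Vert\aleph\mathtt{u}\Vert_\infty+t$, whence $\Vert\mathtt{u}\Vert\ge\Vert\mathtt{u}\Vert+t$ and $t\le 0$, the desired contradiction. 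With that repair your proof is complete for sup-norm-type (monotone-norm) cones, which covers the paper's application; but for the theorem in its stated generality the index-zero computation cannot be obtained from this pointwise translation estimate, and this is exactly the step where the classical treatment in \cite{guo} has to argue differently rather than through a norm inequality for $\aleph\mathtt{u}+t\mathtt{e}$.
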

\begin{theorem}(H\"older's)\label{holder}
	Let $\mathtt{f}\in L^{\mathtt{p}_i}[0,1]$ with $\mathtt{p}_i>1,$ for $i=1, 2,\cdots, n$ and $\displaystyle\sum_{i=1}^{m}\frac{1}{\mathtt{p}_i}=1.$ Then $\prod_{i=1}^{m}\mathtt{f}_i\in L^{1}[0, 1]$ and 
	$\left\Vert \prod_{i=1}^{m}\mathtt{f}_i\right\Vert_1\le\prod_{i=1}^{m}\Vert \mathtt{f}_i\Vert_{\mathtt{p}_i}.$
	Further, if $\mathtt{f}\in L^1[0,1]$ and $\mathtt{g}\in L^\infty[0,1].$ Then $\mathtt{fg} \in L^1[0,1]$ and $\Vert \mathtt{fg}\Vert_1\le\Vert \mathtt{f}\Vert_1\Vert \mathtt{g}\Vert_\infty.$
\end{theorem}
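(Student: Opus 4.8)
The plan is to reduce everything to the classical two-function Hölder inequality and then bootstrap to the general product by induction on $m$, treating the $L^1$--$L^\infty$ statement as a trivial special case at the end. The cornerstone is Young's inequality: for conjugate exponents $\mathtt{p},\mathtt{q}>1$ with $\frac{1}{\mathtt{p}}+\frac{1}{\mathtt{q}}=1$ and nonnegative reals $a,b$, one has $ab\le\frac{a^{\mathtt{p}}}{\mathtt{p}}+\frac{b^{\mathtt{q}}}{\mathtt{q}}$. I would derive this from the concavity of the logarithm (equivalently the convexity of $\exp$): for $a,b>0$ write $ab=\exp\!\big(\frac{1}{\mathtt{p}}\log a^{\mathtt{p}}+\frac{1}{\mathtt{q}}\log b^{\mathtt{q}}\big)$ and apply Jensen's inequality with the weights $\frac{1}{\mathtt{p}},\frac{1}{\mathtt{q}}$, the boundary cases $a=0$ or $b=0$ being checked directly.

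For the base case $m=2$, given $\mathtt{f}\in L^{\mathtt{p}}$ and $\mathtt{g}\in L^{\mathtt{q}}$ I would first dispose of the degenerate situations in which $\Vert\mathtt{f}\Vert_{\mathtt{p}}=0$ or $\Vert\mathtt{g}\Vert_{\mathtt{q}}=0$ (the product is then zero a.e.) and otherwise normalize by replacing $\mathtt{f},\mathtt{g}$ with $\mathtt{f}/\Vert\mathtt{f}\Vert_{\mathtt{p}}$ and $\mathtt{g}/\Vert\mathtt{g}\Vert_{\mathtt{q}}$, so both have unit norm. Applying Young's inequality pointwise with $a=\vert\mathtt{f}(\mathtt{s})\vert$, $b=\vert\mathtt{g}(\mathtt{s})\vert$ and integrating over $[0,1]$ yields $\int_0^1\vert\mathtt{f}\mathtt{g}\vert\,\mathtt{d}\mathtt{s}\le\frac{1}{\mathtt{p}}+\frac{1}{\mathtt{q}}=1$, which after undoing the normalization is exactly $\Vert\mathtt{f}\mathtt{g}\Vert_1\le\Vert\mathtt{f}\Vert_{\mathtt{p}}\Vert\mathtt{g}\Vert_{\mathtt{q}}$; in particular $\mathtt{f}\mathtt{g}\in L^1$.

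For the inductive step I would isolate the first factor and pair it against the remaining product. Assuming the claim for any $m-1$ exponents, given $\mathtt{p}_1,\dots,\mathtt{p}_m>1$ with $\sum_{i=1}^m\frac{1}{\mathtt{p}_i}=1$, set $\mathtt{q}=\mathtt{p}_1/(\mathtt{p}_1-1)$ so that $\frac{1}{\mathtt{p}_1}+\frac{1}{\mathtt{q}}=1$, and apply the base case to $\mathtt{f}_1$ and $h=\prod_{i=2}^m\mathtt{f}_i$ to obtain $\Vert\prod_{i=1}^m\mathtt{f}_i\Vert_1\le\Vert\mathtt{f}_1\Vert_{\mathtt{p}_1}\,\Vert h\Vert_{\mathtt{q}}$. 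The key computation is $\Vert h\Vert_{\mathtt{q}}^{\mathtt{q}}=\int_0^1\prod_{i=2}^m\vert\mathtt{f}_i\vert^{\mathtt{q}}\,\mathtt{d}\mathtt{s}$, to which the induction hypothesis is applied to the functions $\vert\mathtt{f}_i\vert^{\mathtt{q}}$ with the rescaled exponents $\mathtt{r}_i=\mathtt{p}_i/\mathtt{q}$; this works because $\sum_{i=2}^m\frac{1}{\mathtt{r}_i}=\mathtt{q}\sum_{i=2}^m\frac{1}{\mathtt{p}_i}=\mathtt{q}\big(1-\frac{1}{\mathtt{p}_1}\big)=1$, and since the sum of $m-1\ge2$ positive reciprocals equals one each $\mathtt{r}_i>1$. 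Noting $\Vert\,\vert\mathtt{f}_i\vert^{\mathtt{q}}\Vert_{\mathtt{r}_i}=\Vert\mathtt{f}_i\Vert_{\mathtt{p}_i}^{\mathtt{q}}$ then gives $\Vert h\Vert_{\mathtt{q}}\le\prod_{i=2}^m\Vert\mathtt{f}_i\Vert_{\mathtt{p}_i}$, and combining yields the result. The only mildly delicate point — and the one I would treat as the main obstacle — is verifying that the rescaled exponents $\mathtt{r}_i$ are admissible (strictly greater than one, with reciprocals summing to one) so that the induction hypothesis genuinely applies; everything else is bookkeeping.

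Finally, the $L^1$--$L^\infty$ assertion requires no inequality machinery: since $\vert\mathtt{g}(\mathtt{s})\vert\le\Vert\mathtt{g}\Vert_\infty$ for almost every $\mathtt{s}$, the pointwise bound $\vert\mathtt{f}(\mathtt{s})\mathtt{g}(\mathtt{s})\vert\le\Vert\mathtt{g}\Vert_\infty\vert\mathtt{f}(\mathtt{s})\vert$ holds a.e., and integrating over $[0,1]$ gives $\mathtt{f}\mathtt{g}\in L^1$ together with $\Vert\mathtt{f}\mathtt{g}\Vert_1\le\Vert\mathtt{f}\Vert_1\Vert\mathtt{g}\Vert_\infty$.
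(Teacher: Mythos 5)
Your proof is correct. There is nothing to compare it against on the paper's side: the paper states this theorem as a classical result (attributing it only by the name ``H\"older's'') and gives no proof at all, so any complete argument is necessarily ``extra'' relative to the paper. Your route---Young's inequality $ab\le\frac{a^{\mathtt{p}}}{\mathtt{p}}+\frac{b^{\mathtt{q}}}{\mathtt{q}}$ from concavity of the logarithm, the normalized two-function case, then induction on $m$ by pairing $\mathtt{f}_1$ against $h=\prod_{i=2}^{m}\mathtt{f}_i$ with the rescaled exponents $\mathtt{r}_i=\mathtt{p}_i/\mathtt{q}$---is the standard textbook derivation, and you correctly identified and discharged the one point that genuinely needs checking, namely that the $\mathtt{r}_i$ are admissible: $\sum_{i=2}^{m}\frac{1}{\mathtt{r}_i}=\mathtt{q}\bigl(1-\frac{1}{\mathtt{p}_1}\bigr)=1$ and each $\mathtt{r}_i>1$ because $m-1\ge 2$ positive reciprocals sum to one. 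One cosmetic remark: as written you invoke the two-function case for $\mathtt{f}_1$ and $h$ before verifying $h\in L^{\mathtt{q}}[0,1]$, which is only justified afterwards by the induction hypothesis applied to the functions $\vert\mathtt{f}_i\vert^{\mathtt{q}}$; in a polished write-up you would establish $\Vert h\Vert_{\mathtt{q}}\le\prod_{i=2}^{m}\Vert\mathtt{f}_i\Vert_{\mathtt{p}_i}<\infty$ first (or note that the pointwise Young argument yields the inequality with a possibly infinite right-hand side). This is an ordering issue, not a gap.
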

Consider the following three possible cases for $\ell_i\in L^{\mathtt{p}_i}[0,1]:$ 
$$\sum_{i=1}^{m}\frac{1}{\mathtt{p}_i}<1,~\sum_{i=1}^{m}\frac{1}{\mathtt{p}_i}=1,~\sum_{i=1}^{m}\frac{1}{\mathtt{p}_i}>1.$$

Firstly, we seek positive radial solutions for the case $\displaystyle\sum_{i=1}^{m}\frac{1}{\mathtt{p}_i}<1.$ 
\begin{theorem}\label{t43}
	Suppose $(\mathcal{J}_1)$--$(\mathcal{J}_3)$ hold. Further, assume that there exist two positive constants $a_2>a_1>0$ such that
	\begin{itemize}
		\item[$(\mathcal{J}_4)$] $\mathtt{g}_{\dot{\iota}} ( \mathtt{u}(\mathtt{s}))\le \mathfrak{Q}_2a_2$ for all $0\le\mathtt{s}\le1,\, 0\le \mathtt{u} \le a_2,$ 	where $\displaystyle\mathfrak{Q}_2=\left[\frac{ r_0^2}{(\mathtt{N}-2)^2}\Vert\widehat{\Xi}_{r_0}\Vert_\mathtt{q}\prod_{i=1}^{m}\Vert\ell_i\Vert_{\mathtt{p}_i}\right]^{-1}$ and $\widehat{\Xi}_{r_0}(\mathtt{t})=\Xi_{r_0}(\mathtt{t}, \mathtt{t})\mathtt{t}^\frac{2(\mathtt{N}-1)}{2-\mathtt{N}}.$
		\item[$(\mathcal{J}_5)$] $\mathtt{g}_{\dot{\iota}} ( \mathtt{u}(\mathtt{s}))\ge \mathfrak{Q}_1a_1$ for all $0\le\mathtt{s}\le1,\, 0\le \mathtt{u} \le a_1,$ where $\displaystyle\mathfrak{Q}_1=\left[\frac{\wp r_0^2}{(\mathtt{N}-2)^2}\prod_{i=1}^{m}\ell_i^\star\int_{0}^{1}\Xi_{r_0}(\mathtt{t}, \mathtt{t})\mathtt{t}^\frac{2(\mathtt{N}-1)}{2-\mathtt{N}} \mathtt{d}\mathtt{t}\right]^{-1}.$	
	\end{itemize}
	Then iterative system \eqref{eq13}--\eqref{eq130} has atleast one positive radial solution $(\mathtt{u}_1,\mathtt{u}_2,\cdot\cdot\cdot,\mathtt{u}_\mathtt{n})$ such that $a_1\le\Vert\mathtt{u}_{\dot{\iota}} \Vert\le a_2,\,{\dot{\iota}} =1,2,\cdot\cdot\cdot,\mathtt{n}.$
\end{theorem}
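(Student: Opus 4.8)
The plan is to apply Krasnoselskii's cone fixed point theorem (Theorem \ref{t41}) to the operator $\aleph$ on the cone $\mathtt{E}$, with the two open sets $\mathtt{G}=\{\mathtt{u}\in\mathtt{B}:\Vert\mathtt{u}\Vert<a_1\}$ and $\mathtt{F}=\{\mathtt{u}\in\mathtt{B}:\Vert\mathtt{u}\Vert<a_2\}$. Since $0<a_1<a_2$ we have $0\in\mathtt{G}$ and $\overline{\mathtt{G}}\subset\mathtt{F}$, and Lemma \ref{l24} already guarantees that $\aleph:\mathtt{E}\cap(\overline{\mathtt{F}}\backslash\mathtt{G})\to\mathtt{E}$ is completely continuous. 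It therefore remains to verify the norm inequalities of alternative $(i)$: a cone expansion on $\partial\mathtt{G}$ coming from $(\mathcal{J}_5)$ and a cone compression on $\partial\mathtt{F}$ coming from $(\mathcal{J}_4)$.

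For the compression estimate on $\partial\mathtt{F}$, fix $\mathtt{u}_1\in\mathtt{E}$ with $\Vert\mathtt{u}_1\Vert=a_2$, so that $0\le\mathtt{u}_1(\mathtt{t})\le a_2$ on $[0,1]$. I would work from the innermost integral outward. At the innermost level $(\mathcal{J}_4)$ gives $\mathtt{g}_\mathtt{n}(\mathtt{u}_1(\mathtt{t}_\mathtt{n}))\le\mathfrak{Q}_2a_2$; using Lemma \ref{l22}$(ii)$ to replace $\Xi_{r_0}(\mathtt{t}_{\mathtt{n}-1},\mathtt{t}_\mathtt{n})$ by $\Xi_{r_0}(\mathtt{t}_\mathtt{n},\mathtt{t}_\mathtt{n})$, substituting $\ell(\mathtt{t})=\frac{r_0^2}{(\mathtt{N}-2)^2}\mathtt{t}^{\frac{2(\mathtt{N}-1)}{2-\mathtt{N}}}\prod_{i=1}^m\ell_i(\mathtt{t})$ so that the weight is absorbed into $\widehat{\Xi}_{r_0}$, and applying the generalized H\"older inequality (Theorem \ref{holder}) with the exponent $\mathtt{q}$ determined by $\frac{1}{\mathtt{q}}+\sum_{i=1}^m\frac{1}{\mathtt{p}_i}=1$ (admissible precisely because $\sum_{i=1}^m\frac{1}{\mathtt{p}_i}<1$), the innermost integral is bounded by $\mathfrak{Q}_2a_2\cdot\frac{r_0^2}{(\mathtt{N}-2)^2}\Vert\widehat{\Xi}_{r_0}\Vert_\mathtt{q}\prod_{i=1}^m\Vert\ell_i\Vert_{\mathtt{p}_i}=a_2$, by the very definition of $\mathfrak{Q}_2$. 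The point is that this value again lies in $[0,a_2]$, so $(\mathcal{J}_4)$ applies at the next level and the same bound reproduces itself; iterating through all $\mathtt{n}$ layers yields $(\aleph\mathtt{u}_1)(\mathtt{s})\le a_2$ for every $\mathtt{s}$, hence $\Vert\aleph\mathtt{u}_1\Vert\le\Vert\mathtt{u}_1\Vert$ on $\partial\mathtt{F}$.

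For the expansion estimate on $\partial\mathtt{G}$, fix $\mathtt{u}_1\in\mathtt{E}$ with $\Vert\mathtt{u}_1\Vert=a_1$; the cone condition forces $\mathtt{u}_1(\mathtt{t})\ge\wp a_1$ and in particular $0\le\mathtt{u}_1(\mathtt{t})\le a_1$. Now I would use the opposite kernel bound, Lemma \ref{l22}$(iii)$, namely $\Xi_{r_0}(\mathtt{s},\mathtt{t})\ge\wp\Xi_{r_0}(\mathtt{t},\mathtt{t})$, together with the lower bounds $\ell_i(\mathtt{t})\ge\ell_i^\star$ from $(\mathcal{J}_3)$ and the estimate $\mathtt{g}_\mathtt{n}(\mathtt{u}_1(\mathtt{t}_\mathtt{n}))\ge\mathfrak{Q}_1a_1$ from $(\mathcal{J}_5)$. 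The innermost integral is then at least $\mathfrak{Q}_1a_1\cdot\frac{\wp r_0^2}{(\mathtt{N}-2)^2}\prod_{i=1}^m\ell_i^\star\int_0^1\Xi_{r_0}(\mathtt{t},\mathtt{t})\mathtt{t}^{\frac{2(\mathtt{N}-1)}{2-\mathtt{N}}}\mathtt{d}\mathtt{t}=a_1$, again by the definition of $\mathfrak{Q}_1$. Applying $(\mathcal{J}_5)$ layer by layer in the same way gives $(\aleph\mathtt{u}_1)(\mathtt{s})\ge a_1$, so $\Vert\aleph\mathtt{u}_1\Vert\ge\Vert\mathtt{u}_1\Vert$ on $\partial\mathtt{G}$. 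Theorem \ref{t41}$(i)$ then produces a fixed point $\mathtt{u}_1=\aleph\mathtt{u}_1$ with $a_1\le\Vert\mathtt{u}_1\Vert\le a_2$, and, as in Lemma \ref{l21}, setting $\mathtt{u}_{\dot{\iota}}(\mathtt{s})=\int_0^1\Xi_{r_0}(\mathtt{s},\mathtt{t})\ell(\mathtt{t})\mathtt{g}_{\dot{\iota}}(\mathtt{u}_{{\dot{\iota}}+1}(\mathtt{t}))\mathtt{d}\mathtt{t}$ recovers the full tuple, each component of which lies in $\mathtt{E}$ and satisfies $a_1\le\Vert\mathtt{u}_{\dot{\iota}}\Vert\le a_2$ by the same two estimates.

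The main obstacle is the expansion step, not the compression one. In the compression estimate the nested arguments automatically stay in $[0,a_2]$, so the bound $(\mathcal{J}_4)$ feeds cleanly into the next layer; the only real work there is the H\"older bookkeeping and the correct placement of the weight $\mathtt{t}^{2(\mathtt{N}-1)/(2-\mathtt{N})}$ inside $\widehat{\Xi}_{r_0}$. For the expansion estimate, however, $(\mathcal{J}_5)$ only controls $\mathtt{g}_{\dot{\iota}}$ on $[0,a_1]$, whereas each inner integral already computes out to at least $a_1$ (and, by $(\mathcal{J}_4)$, at most $a_2$); one must therefore argue carefully that the arguments fed into each successive $\mathtt{g}_{\dot{\iota}}$ remain in the admissible range so that $(\mathcal{J}_5)$ may legitimately be reapplied. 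This is the delicate point on which the level-by-level propagation of the lower bound rests, and I would treat it with the most care.
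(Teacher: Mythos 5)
Your proposal is, in substance, the paper's own proof: the same operator $\aleph$ on the same cone $\mathtt{E}$, Krasnoselskii's theorem (Theorem \ref{t41}) applied to the pair of balls of radii $a_1$ and $a_2$, the kernel bounds of Lemma \ref{l22}(ii)--(iii) combined with H\"older's inequality (Theorem \ref{holder}) to get the compression bound from $(\mathcal{J}_4)$ and the expansion bound from $(\mathcal{J}_5)$, and the same innermost-to-outermost bootstrapping, followed by the iterative reconstruction of the remaining components. (Incidentally, your labelling of the two sets, with $\mathtt{G}$ the smaller ball and $\mathtt{F}$ the larger, is the one actually consistent with the hypothesis $0\in\mathtt{G}$, $\overline{\mathtt{G}}\subset\mathtt{F}$ of Theorem \ref{t41}; the paper swaps the letters and then applies the theorem with the roles interchanged.)

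The delicate point you flag in the expansion step is genuine, and you should know that the paper does not resolve it either. In the paper's second display for the lower bound, the quantity $\mathtt{g}_{\mathtt{n}-1}\big[\int_0^1\Xi_{r_0}(\mathtt{t}_{\mathtt{n}-1},\mathtt{t}_\mathtt{n})\ell(\mathtt{t}_\mathtt{n})\mathtt{g}_\mathtt{n}(\mathtt{u}_1(\mathtt{t}_\mathtt{n}))\,\mathtt{d}\mathtt{t}_\mathtt{n}\big]$ is silently replaced by $\mathtt{g}_{\mathtt{n}-1}(a_1)$, even though the inner integral has just been shown to be $\ge a_1$, i.e.\ it generally lies \emph{outside} the interval $[0,a_1]$ on which $(\mathcal{J}_5)$ gives any information. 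That substitution is legitimate only if the $\mathtt{g}_{\dot{\iota}}$ are nondecreasing (which is not assumed) or if $(\mathcal{J}_5)$ is strengthened. Your own observation supplies the cleanest repair: on the smaller boundary $\partial\mathtt{G}$ the nested arguments lie in $[a_1,a_2]$ --- the lower bound by the $\mathfrak{Q}_1$ computation, the upper bound by $(\mathcal{J}_4)$ via the identical H\"older computation, since $[0,a_1]\subset[0,a_2]$ --- so it suffices to require the lower bound $\mathtt{g}_{\dot{\iota}}(\mathtt{u})\ge\mathfrak{Q}_1a_1$ for all $0\le\mathtt{u}\le a_2$ rather than only on $[0,a_1]$ (or, alternatively, to assume each $\mathtt{g}_{\dot{\iota}}$ nondecreasing). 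With that modification your layer-by-layer propagation closes; the compression half needs no such repair, and your treatment of it (arguing that each nested value stays in $[0,a_2]$ so that $(\mathcal{J}_4)$ reapplies) is in fact more careful than the paper's, which writes the equally unjustified shorthand $\mathtt{g}_{\mathtt{n}-1}(a_2)$ there. In short: your proof and the paper's share the same gap at the same spot, but yours names it and points to the correct fix.
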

\begin{proof}	
	Let $\mathtt{G}=\{\mathtt{u}\in\mathtt{B}:\Vert\mathtt{u}\Vert<a_2\}.$ For $\mathtt{u}_1\in\partial\mathtt{G},$ we have $0\le\mathtt{u}\le a_2$ for all $\mathtt{s}\in[0,1].$ It follows from $(\mathcal{J}_4)$ that for $\mathtt{t}_{\mathtt{n}-1}\in[0,1],$
	$$
	\begin{aligned}
		\int_{0}^{1}\Xi_{r_0}(\mathtt{t}_{\mathtt{n}-1}, \mathtt{t}_\mathtt{n})\ell(\mathtt{t}_\mathtt{n})\mathtt{g}_\mathtt{n}\big( \mathtt{u}_1(\mathtt{t}_\mathtt{n})\big)\mathtt{d}\mathtt{t}_\mathtt{n}
		&\le \int_{0}^{1}\Xi_{r_0}(\mathtt{t}_{\mathtt{n}}, \mathtt{t}_\mathtt{n})\ell(\mathtt{t}_\mathtt{n})\mathtt{g}_\mathtt{n}\big( \mathtt{u}_1(\mathtt{t}_\mathtt{n})\big)\mathtt{d}\mathtt{t}_\mathtt{n}\\
		&\le \mathfrak{Q}_2a_2\int_{0}^{1}\Xi_{r_0}(\mathtt{t}_{\mathtt{n}}, \mathtt{t}_\mathtt{n})\ell(\mathtt{t}_\mathtt{n})\mathtt{d}\mathtt{t}_\mathtt{n}\\
		&\le \mathfrak{Q}_2a_2\frac{r_0^2}{(\mathtt{N}-2)^2}\int_{0}^{1}\Xi_{r_0}(\mathtt{t}_{\mathtt{n}}, \mathtt{t}_\mathtt{n})\mathtt{t}^\frac{2(\mathtt{N}-1)}{2-\mathtt{N}}_\mathtt{n}\prod_{i=1}^{m}\ell_i(\mathtt{t}_\mathtt{n})\mathtt{d}\mathtt{t}_\mathtt{n}.
	\end{aligned}
	$$
	There exists a $\mathtt{q}>1$ such that $\displaystyle\sum_{i=1}^{m}\frac{1}{\mathtt{p}_i}+\frac{1}{\mathtt{q}}=1.$ By the first part of Theorem \ref{holder}, we have
	$$
	\begin{aligned}
		\int_{0}^{1}\Xi_{r_0}(\mathtt{t}_{\mathtt{n}-1}, \mathtt{t}_\mathtt{n})\ell(\mathtt{t}_\mathtt{n})\mathtt{g}_\mathtt{n}\big( \mathtt{u}_1(\mathtt{t}_\mathtt{n})\big)\mathtt{d}\mathtt{t}_\mathtt{n}&\le \mathfrak{Q}_2a_2\frac{ r_0^2}{(\mathtt{N}-2)^2}\Vert\widehat{\Xi}_{r_0}\Vert_\mathtt{q}\prod_{i=1}^{m}\Vert\ell_i\Vert_{\mathtt{p}_i}\\
		&\le a_2.
	\end{aligned}
	$$
	It follows in similar manner for $0<\mathtt{t}_{\mathtt{n}-2}<1,$
	$$
	\begin{aligned}
		\int_{0}^{1}\Xi_{r_0}(\mathtt{t}_{\mathtt{n}-2}, \mathtt{t}_{\mathtt{n}-1})\ell(\mathtt{t}_{\mathtt{n}-1})\mathtt{g}_{\mathtt{n}-1}&\Bigg[ \int_{0}^{1}\Xi_{r_0}(\mathtt{t}_{\mathtt{n}-1}, \mathtt{t}_\mathtt{n})\ell(\mathtt{t}_\mathtt{n})\mathtt{g}_\mathtt{n}\big( \mathtt{u}_1(\mathtt{t}_\mathtt{n})\big)\mathtt{d}\mathtt{t}_\mathtt{n}\Bigg]\mathtt{d}\mathtt{t}_{\mathtt{n}-1}\\
		&\le\int_{0}^{1}\Xi_{r_0}(\mathtt{t}_{\mathtt{n}-1}, \mathtt{t}_{\mathtt{n}-1})\ell(\mathtt{t}_{\mathtt{n}-1})\mathtt{g}_{\mathtt{n}-1}(a_2)\mathtt{d}\mathtt{t}_{\mathtt{n}-1}\\
		&\le\mathfrak{Q}_2a_2\int_{0}^{1}\Xi_{r_0}(\mathtt{t}_{\mathtt{n}-1}, \mathtt{t}_{\mathtt{n}-1})\ell(\mathtt{t}_{\mathtt{n}-1})\mathtt{d}\mathtt{t}_{\mathtt{n}-1}\\
		&\le \mathfrak{Q}_2a_2\frac{ r_0^2}{(\mathtt{N}-2)^2}\Vert\widehat{\Xi}_{r_0}\Vert_\mathtt{q}\prod_{i=1}^{m}\Vert\ell_i\Vert_{\mathtt{p}_i}\\
		&\le a_2.
	\end{aligned}
	$$
	Continuing with this bootstrapping argument, we reach
	$$
	\begin{aligned}
		(\aleph  \mathtt{u}_1)(t)=&\,  \int_{0}^{1}\Xi_{r_0}(\mathtt{s}, \mathtt{t}_1)\ell(\mathtt{t}_1)\mathtt{g}_1\Bigg[ \int_{0}^{1}\Xi_{r_0}(\mathtt{t}_1, \mathtt{t}_2)\ell(\mathtt{t}_2)\mathtt{g}_2\Bigg[ \int_{0}^{1}\Xi_{r_0}(\mathtt{t}_2, \mathtt{t}_3)\ell(\mathtt{t}_3)\mathtt{g}_4 \cdots\\
		&\hskip1cm\mathtt{g}_{\mathtt{n}-1}\Bigg[ \int_{0}^{1}\Xi_{r_0}(\mathtt{t}_{\mathtt{n}-1}, \mathtt{t}_\mathtt{n})\ell(\mathtt{t}_\mathtt{n})\mathtt{g}_\mathtt{n}\big( \mathtt{u}_1(\mathtt{t}_\mathtt{n})\big)\mathtt{d}\mathtt{t}_\mathtt{n}\Bigg]\cdot\cdot\cdot \Bigg]\mathtt{d}\mathtt{t}_3\Bigg]\mathtt{d}\mathtt{t}_2\Bigg]\mathtt{d}\mathtt{t}_1\\
		\le&\,a_2.
	\end{aligned}
	$$
	Since $\mathtt{G}=\Vert \mathtt{u}_1\Vert$ for $ \mathtt{u}_1\in \mathtt{E}\cap\partial{\mathtt{G}},$ we get
	\begin{equation}\label{eq41}
		\Vert\aleph  \mathtt{u}_1\Vert\le\Vert  \mathtt{u}_1\Vert.
	\end{equation}
	Next, let $\mathtt{F}=\{\mathtt{u}\in\mathtt{B}:\Vert\mathtt{u}\Vert<a_1\}.$ For $\mathtt{u}_1\in\partial\mathtt{F},$ we have $0\le\mathtt{u}\le a_1$ for all $\mathtt{s}\in[0,1].$ It follows from $(\mathcal{J}_5)$ that for $\mathtt{t}_{\mathtt{n}-1}\in[0,1],$
	$$
	\begin{aligned}
	\int_{0}^{1}\Xi_{r_0}(\mathtt{t}_{\mathtt{n}-1}, \mathtt{t}_\mathtt{n})\ell(\mathtt{t}_\mathtt{n})\mathtt{g}_\mathtt{n}\big( \mathtt{u}_1(\mathtt{t}_\mathtt{n})\big)\mathtt{d}\mathtt{t}_\mathtt{n}
	&\ge \wp\int_{0}^{1}\Xi_{r_0}(\mathtt{t}_{\mathtt{n}}, \mathtt{t}_\mathtt{n})\ell(\mathtt{t}_\mathtt{n})\mathtt{g}_\mathtt{n}\big( \mathtt{u}_1(\mathtt{t}_\mathtt{n})\big)\mathtt{d}\mathtt{t}_\mathtt{n}\\
	&\ge \wp\mathfrak{Q}_1a_1\int_{0}^{1}\Xi_{r_0}(\mathtt{t}_{\mathtt{n}}, \mathtt{t}_\mathtt{n})\ell(\mathtt{t}_\mathtt{n})\mathtt{d}\mathtt{t}_\mathtt{n}\\
	&\ge \mathfrak{Q}_1a_1\frac{\wp r_0^2}{(\mathtt{N}-2)^2}\int_{0}^{1}\Xi_{r_0}(\mathtt{t}_{\mathtt{n}}, \mathtt{t}_\mathtt{n})\mathtt{t}^\frac{2(\mathtt{N}-1)}{2-\mathtt{N}}_\mathtt{n}\prod_{i=1}^{m}\ell_i(\mathtt{t}_\mathtt{n})\mathtt{d}\mathtt{t}_\mathtt{n}\\
	&\ge \mathfrak{Q}_1a_1\frac{\wp r_0^2}{(\mathtt{N}-2)^2}\prod_{i=1}^{m}\ell_i^\star\int_{0}^{1}\Xi_{r_0}(\mathtt{t}_{\mathtt{n}}, \mathtt{t}_\mathtt{n})\mathtt{t}^\frac{2(\mathtt{N}-1)}{2-\mathtt{N}}_\mathtt{n} \mathtt{d}\mathtt{t}_\mathtt{n}\\
	&\ge a_1.
	\end{aligned}
	$$
	It follows in similar manner for $0<\mathtt{t}_{\mathtt{n}-2}<1,$
$$
\begin{aligned}
\int_{0}^{1}\Xi_{r_0}(\mathtt{t}_{\mathtt{n}-2}, \mathtt{t}_{\mathtt{n}-1})\ell(\mathtt{t}_{\mathtt{n}-1})\mathtt{g}_{\mathtt{n}-1}&\Bigg[ \int_{0}^{1}\Xi_{r_0}(\mathtt{t}_{\mathtt{n}-1}, \mathtt{t}_\mathtt{n})\ell(\mathtt{t}_\mathtt{n})\mathtt{g}_\mathtt{n}\big( \mathtt{u}_1(\mathtt{t}_\mathtt{n})\big)\mathtt{d}\mathtt{t}_\mathtt{n}\Bigg]\mathtt{d}\mathtt{t}_{\mathtt{n}-1}\\
&\ge\wp\int_{0}^{1}\Xi_{r_0}(\mathtt{t}_{\mathtt{n}-1}, \mathtt{t}_{\mathtt{n}-1})\ell(\mathtt{t}_{\mathtt{n}-1})\mathtt{g}_{\mathtt{n}-1}(a_1)\mathtt{d}\mathtt{t}_{\mathtt{n}-1}\\
&\ge \wp\mathfrak{Q}_1a_1\int_{0}^{1}\Xi_{r_0}(\mathtt{t}_{\mathtt{n}-1}, \mathtt{t}_{\mathtt{n}-1})\ell(\mathtt{t}_{\mathtt{n}-1})\mathtt{d}\mathtt{t}_{\mathtt{n}-1}\\
&\ge \mathfrak{Q}_1a_1\frac{\wp r_0^2}{(\mathtt{N}-2)^2}\int_{0}^{1}\Xi_{r_0}(\mathtt{t}_{\mathtt{n}-1}, \mathtt{t}_{\mathtt{n}-1})\mathtt{t}^\frac{2(\mathtt{N}-1)}{2-\mathtt{N}}_{\mathtt{n}-1}\prod_{i=1}^{m}\ell_i(\mathtt{t}_{\mathtt{n}-1})\mathtt{d}\mathtt{t}_{\mathtt{n}-1}\\
&\ge \mathfrak{Q}_1a_1\frac{\wp r_0^2}{(\mathtt{N}-2)^2}\prod_{i=1}^{m}\ell_i^\star\int_{0}^{1}\Xi_{r_0}(\mathtt{t}_{\mathtt{n}-1}, \mathtt{t}_{\mathtt{n}-1})\mathtt{t}^\frac{2(\mathtt{N}-1)}{2-\mathtt{N}}_{\mathtt{n}-1} \mathtt{d}\mathtt{t}_{\mathtt{n}-1}\\
&\ge a_1.
\end{aligned}
$$
	Continuing with bootstrapping argument, we get
	$$
	\begin{aligned}
		(\aleph  \mathtt{u}_1)(\mathtt{s})=&\,  \int_{0}^{1}\Xi_{r_0}(\mathtt{s}, \mathtt{t}_1)\ell(\mathtt{t}_1)\mathtt{g}_1\Bigg[ \int_{0}^{1}\Xi_{r_0}(\mathtt{t}_1, \mathtt{t}_2)\ell(\mathtt{t}_2)\mathtt{g}_2\Bigg[ \int_{0}^{1}\Xi_{r_0}(\mathtt{t}_2, \mathtt{t}_3)\ell(\mathtt{t}_3)\mathtt{g}_4 \cdots\\
		&\hskip0.6cm\mathtt{g}_{\mathtt{n}-1}\Bigg[ \int_{0}^{1}\Xi_{r_0}(\mathtt{t}_{\mathtt{n}-1}, \mathtt{t}_\mathtt{n})\ell(\mathtt{t}_\mathtt{n})\mathtt{g}_\mathtt{n}\big( \mathtt{u}_1(\mathtt{t}_\mathtt{n})\big)\mathtt{d}\mathtt{t}_\mathtt{n}\Bigg]\cdot\cdot\cdot \Bigg]\mathtt{d}\mathtt{t}_3\Bigg]\mathtt{d}\mathtt{t}_2\Bigg]\mathtt{d}\mathtt{t}_1\\
		\ge&\,a_1.
	\end{aligned}
	$$
	Thus, for $ \mathtt{u}_1\in \mathtt{E}\cap\partial\mathtt{F},$ we have
	\begin{equation}\label{eq42}
		\Vert \aleph  \mathtt{u}_1\Vert\ge\Vert  \mathtt{u}_1\Vert.
	\end{equation} 
	It is clear that $0\in\mathtt{F}\subset\overline{\mathtt{F}}\subset\mathtt{G}.$ It follows from \eqref{eq41}, \eqref{eq42} and  Theorem \ref{t41} that the operator $\aleph $ has a fixed point $ \mathtt{u}_1\in \mathtt{E}\cap\big(\overline{\mathtt{G}}\backslash\mathtt{F}\big)$ such that $ \mathtt{u}_1(\mathtt{s})\ge 0$ on $(0, 1).$ Next setting $ \mathtt{u}_{\mathtt{n}+1}= \mathtt{u}_1,$ we obtain infinitely many positive solutions $( \mathtt{u}_1,  \mathtt{u}_2,\cdot\cdot\cdot,\mathtt{u}_\mathtt{n})$ of \eqref{eq13}--\eqref{eq130} given iteratively by
	$$
	\begin{aligned}
		\mathtt{u}_{\dot{\iota}} (\mathtt{s})&= \int_{0}^{1}\Xi_{r_0}(\mathtt{s},s)\ell(s)\mathtt{g}_{{\dot{\iota}} }( \mathtt{u}_{{\dot{\iota}} +1}(s))ds,\,{\dot{\iota}} =1, 2,\cdot\cdot\cdot,{\mathtt{n}}-1,\mathtt{n},\\
		\mathtt{u}_{\mathtt{n}+1}(\mathtt{s})&= \mathtt{u}_1(\mathtt{s}),\,\mathtt{s}\in(0,1).
	\end{aligned}
	$$
	This completes the proof.\qed
\end{proof}

For $\sum_{i=1}^{m}\frac{1}{\mathtt{p}_i}=1$ and $\sum_{i=1}^{m}\frac{1}{\mathtt{p}_i}>1,$ we have following results.
\begin{theorem}\label{t44}
		Suppose $(\mathcal{J}_1)$--$(\mathcal{J}_3)$ hold. Further, assume that there exist two positive constants $b_2>b_1>0$ such that $\mathtt{g}_{\dot{\iota}} \,({\dot{\iota}} =1,2,\cdot\cdot\cdot,\mathtt{n})$ satisfies $(\mathcal{J}_5)$ and
	\begin{itemize}
		\item[$(\mathcal{J}_6)$] $\mathtt{g}_{\dot{\iota}} ( \mathtt{u}(\mathtt{s}))\le \mathfrak{N}_2b_2$ for all $0\le\mathtt{s}\le1,\, 0\le \mathtt{u} \le b_2,$ 	where $\displaystyle\mathfrak{N}_2=\left[\frac{ r_0^2}{(\mathtt{N}-2)^2}\Vert\widehat{\Xi}_{r_0}\Vert_\infty\prod_{i=1}^{m}\Vert\ell_i\Vert_{\mathtt{p}_i}\right]^{-1}$ and $\widehat{\Xi}_{r_0}(\mathtt{t})=\Xi_{r_0}(\mathtt{t}, \mathtt{t})\mathtt{t}^\frac{2(\mathtt{N}-1)}{2-\mathtt{N}}.$
	\end{itemize}
	Then iterative system \eqref{eq13}--\eqref{eq130} has atleast one positive radial solution $(\mathtt{u}_1,\mathtt{u}_2,\cdot\cdot\cdot,\mathtt{u}_\mathtt{n})$ such that $b_1\le\Vert\mathtt{u}_{\dot{\iota}} \Vert\le b_2,\,{\dot{\iota}} =1,2,\cdot\cdot\cdot,\mathtt{n}.$
\end{theorem}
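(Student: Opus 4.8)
The plan is to run the argument of Theorem~\ref{t43} essentially unchanged, the only structural difference being the way H\"older's inequality is invoked on the innermost integral. When $\sum_{i=1}^{m}\frac{1}{\mathtt{p}_i}<1$ one introduces an auxiliary conjugate exponent $\mathtt{q}$ and measures the kernel factor in $L^\mathtt{q}$; here that extra exponent is no longer available, so I would instead use the $L^1$--$L^\infty$ duality, i.e. the second part of Theorem~\ref{holder}, which is exactly why $\mathfrak{N}_2$ is built from $\Vert\widehat{\Xi}_{r_0}\Vert_\infty$ rather than $\Vert\widehat{\Xi}_{r_0}\Vert_\mathtt{q}$. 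Accordingly I set $\mathtt{G}=\{\mathtt{u}\in\mathtt{B}:\Vert\mathtt{u}\Vert<b_2\}$ and $\mathtt{F}=\{\mathtt{u}\in\mathtt{B}:\Vert\mathtt{u}\Vert<b_1\}$, so that $0\in\mathtt{F}\subset\overline{\mathtt{F}}\subset\mathtt{G}$, and verify the two norm inequalities required by Theorem~\ref{t41} on $\mathtt{E}\cap\partial\mathtt{G}$ and $\mathtt{E}\cap\partial\mathtt{F}$.

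For the outer boundary I would proceed layer by layer as in the bootstrapping of Theorem~\ref{t43}. For $\mathtt{u}_1\in\partial\mathtt{G}$ one has $0\le\mathtt{u}_1\le b_2$, and $(\mathcal{J}_6)$ together with Lemma~\ref{l22}(ii) reduces the innermost integral to a multiple of $\int_0^1\widehat{\Xi}_{r_0}(\mathtt{t})\prod_{i=1}^m\ell_i(\mathtt{t})\,\mathtt{d}\mathtt{t}$. In the critical case $\sum_{i=1}^{m}\frac{1}{\mathtt{p}_i}=1$, the first part of Theorem~\ref{holder} gives $\prod_{i=1}^m\ell_i\in L^1[0,1]$, and then the second part yields
$$
\int_{0}^{1}\widehat{\Xi}_{r_0}(\mathtt{t})\prod_{i=1}^{m}\ell_i(\mathtt{t})\,\mathtt{d}\mathtt{t}\le\Vert\widehat{\Xi}_{r_0}\Vert_\infty\Big\Vert\prod_{i=1}^{m}\ell_i\Big\Vert_1\le\Vert\widehat{\Xi}_{r_0}\Vert_\infty\prod_{i=1}^{m}\Vert\ell_i\Vert_{\mathtt{p}_i},
$$
so that, by the definition of $\mathfrak{N}_2$, the innermost integral is bounded by $b_2$. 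In the case $\sum_{i=1}^{m}\frac{1}{\mathtt{p}_i}>1$ I would first pass to exponents $\widetilde{\mathtt{p}}_i\le\mathtt{p}_i$ with $\sum_{i=1}^{m}\frac{1}{\widetilde{\mathtt{p}}_i}=1$; since $[0,1]$ carries a probability measure, $L^{\mathtt{p}_i}[0,1]\hookrightarrow L^{\widetilde{\mathtt{p}}_i}[0,1]$ with $\Vert\ell_i\Vert_{\widetilde{\mathtt{p}}_i}\le\Vert\ell_i\Vert_{\mathtt{p}_i}$, which reduces this case to the previous one with a bound no larger. Iterating this estimate through all $\mathtt{n}$ layers gives $(\aleph\mathtt{u}_1)(\mathtt{s})\le b_2$, hence $\Vert\aleph\mathtt{u}_1\Vert\le\Vert\mathtt{u}_1\Vert$ on $\mathtt{E}\cap\partial\mathtt{G}$.

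The inner boundary $\mathtt{E}\cap\partial\mathtt{F}$ is handled exactly as in Theorem~\ref{t43}, because $(\mathcal{J}_5)$ is again assumed and the constant $\mathfrak{Q}_1$ is identical: using $\mathtt{g}_\mathtt{n}(\mathtt{u}_1)\ge\mathfrak{Q}_1b_1$, the lower estimate $\wp\,\Xi_{r_0}(\mathtt{t},\mathtt{t})\le\Xi_{r_0}(\mathtt{s},\mathtt{t})$ from Lemma~\ref{l22}(iii), and $(\mathcal{J}_3)$ through $\ell_i^\star$, the same bootstrapping yields $(\aleph\mathtt{u}_1)(\mathtt{s})\ge b_1$, i.e. $\Vert\aleph\mathtt{u}_1\Vert\ge\Vert\mathtt{u}_1\Vert$ on $\mathtt{E}\cap\partial\mathtt{F}$. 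Theorem~\ref{t41}(ii) then supplies a fixed point $\mathtt{u}_1\in\mathtt{E}\cap(\overline{\mathtt{G}}\backslash\mathtt{F})$ with $b_1\le\Vert\mathtt{u}_1\Vert\le b_2$, and setting $\mathtt{u}_{\mathtt{n}+1}=\mathtt{u}_1$ recovers the full tuple $(\mathtt{u}_1,\dots,\mathtt{u}_\mathtt{n})$ through the iterative formula. The only genuine obstacle I anticipate is making the strict case $\sum_{i=1}^{m}\frac{1}{\mathtt{p}_i}>1$ rigorous, namely justifying that shrinking the exponents to the critical sum is legitimate and does not enlarge the constant via the norm-decreasing embedding $L^{\mathtt{p}_i}[0,1]\subset L^{\widetilde{\mathtt{p}}_i}[0,1]$ on the unit-measure interval; the remainder is a faithful transcription of Theorem~\ref{t43} with $\Vert\widehat{\Xi}_{r_0}\Vert_\mathtt{q}$ replaced throughout by $\Vert\widehat{\Xi}_{r_0}\Vert_\infty$.
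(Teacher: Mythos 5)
Your core argument is correct and is exactly what the paper intends: Theorem \ref{t44} is stated without proof as the $\sum_{i=1}^{m}1/\mathtt{p}_i=1$ analogue of Theorem \ref{t43}, and your verification --- Lemma \ref{l22}(ii) together with $(\mathcal{J}_6)$ to reduce the innermost integral to a multiple of $\frac{r_0^2}{(\mathtt{N}-2)^2}\int_0^1\widehat{\Xi}_{r_0}(\mathtt{t})\prod_{i=1}^{m}\ell_i(\mathtt{t})\,\mathtt{d}\mathtt{t}$, then the first part of Theorem \ref{holder} to get $\big\Vert\prod_{i=1}^{m}\ell_i\big\Vert_1\le\prod_{i=1}^{m}\Vert\ell_i\Vert_{\mathtt{p}_i}$ and the $L^1$--$L^\infty$ part to insert $\Vert\widehat{\Xi}_{r_0}\Vert_\infty$, the lower estimate on $\mathtt{E}\cap\partial\mathtt{F}$ carried over verbatim from $(\mathcal{J}_5)$ with $\mathfrak{Q}_1$, and finally Theorem \ref{t41} --- is precisely the intended modification of the proof of Theorem \ref{t43}.

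However, the portion of your proposal devoted to the case $\sum_{i=1}^{m}1/\mathtt{p}_i>1$ is genuinely wrong, and cannot be repaired along the lines you suggest. If $\widetilde{\mathtt{p}}_i\le\mathtt{p}_i$ for every $i$, then $1/\widetilde{\mathtt{p}}_i\ge1/\mathtt{p}_i$, hence $\sum_{i=1}^{m}1/\widetilde{\mathtt{p}}_i\ge\sum_{i=1}^{m}1/\mathtt{p}_i>1$; exponents $\widetilde{\mathtt{p}}_i\le\mathtt{p}_i$ with $\sum_{i=1}^{m}1/\widetilde{\mathtt{p}}_i=1$ therefore do not exist. To bring the reciprocal sum down to $1$ you would have to \emph{enlarge} the exponents, and then the embedding runs the wrong way: on the unit interval one has $\Vert f\Vert_{\mathtt{q}}\le\Vert f\Vert_{\mathtt{p}}$ only for $\mathtt{q}\le\mathtt{p}$, so membership $\ell_i\in L^{\mathtt{p}_i}[0,1]$ gives no control whatsoever of $\Vert\ell_i\Vert_{\widetilde{\mathtt{p}}_i}$ when $\widetilde{\mathtt{p}}_i>\mathtt{p}_i$. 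This failure is exactly why the paper treats $\sum_{i=1}^{m}1/\mathtt{p}_i>1$ in a separate statement (Theorem \ref{t45}) with a different constant $\mathfrak{M}_2$ built from $\prod_{i=1}^{m}\Vert\ell_i\Vert_1$ instead of $\prod_{i=1}^{m}\Vert\ell_i\Vert_{\mathtt{p}_i}$. Fortunately this flaw does not touch the statement you were asked to prove: by the sentence preceding it in the paper, Theorem \ref{t44} covers only the case $\sum_{i=1}^{m}1/\mathtt{p}_i=1$, so the correct course is to delete the $>1$ discussion from your argument rather than attempt to fix it.
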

\begin{theorem}\label{t45}
	Suppose $(\mathcal{J}_1)$--$(\mathcal{J}_3)$ hold. Further, assume that there exist two positive constants $c_2>c_1>0$ such that $\mathtt{g}_{\dot{\iota}} \,({\dot{\iota}} =1,2,\cdot\cdot\cdot,\mathtt{n})$ satisfies $(\mathcal{J}_5)$ and
	\begin{itemize}
		\item[$(\mathcal{J}_7)$] $\mathtt{g}_{\dot{\iota}} ( \mathtt{u}(\mathtt{s}))\le \mathfrak{M}_2c_2$ for all $0\le\mathtt{s}\le1,\, 0\le \mathtt{u} \le c_2,$ 	where $\displaystyle\mathfrak{M}_2=\left[\frac{ r_0^2}{(\mathtt{N}-2)^2}\Vert\widehat{\Xi}_{r_0}\Vert_\infty\prod_{i=1}^{m}\Vert\ell_i\Vert_1\right]^{-1}$ and $\widehat{\Xi}_{r_0}(\mathtt{t})=\Xi_{r_0}(\mathtt{t}, \mathtt{t})\mathtt{t}^\frac{2(\mathtt{N}-1)}{2-\mathtt{N}}.$
	\end{itemize}
	Then iterative system \eqref{eq13}--\eqref{eq130} has atleast one positive radial solution $(\mathtt{u}_1,\mathtt{u}_2,\cdot\cdot\cdot,\mathtt{u}_\mathtt{n})$ such that $c_1\le\Vert\mathtt{u}_{\dot{\iota}} \Vert\le c_2,\,{\dot{\iota}} =1,2,\cdot\cdot\cdot,\mathtt{n}.$
\end{theorem}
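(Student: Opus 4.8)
The plan is to repeat the compression--expansion scheme behind Theorem \ref{t43} essentially verbatim, changing only the integrability estimate so as to match the regime $\sum_{i=1}^{m}\frac{1}{\mathtt{p}_i}>1$. First I would set $\mathtt{G}=\{\mathtt{u}\in\mathtt{B}:\Vert\mathtt{u}\Vert<c_2\}$ and $\mathtt{F}=\{\mathtt{u}\in\mathtt{B}:\Vert\mathtt{u}\Vert<c_1\}$, so that $0\in\mathtt{F}\subset\overline{\mathtt{F}}\subset\mathtt{G}$. The two boundary inequalities $\Vert\aleph\mathtt{u}_1\Vert\le\Vert\mathtt{u}_1\Vert$ on $\mathtt{E}\cap\partial\mathtt{G}$ and $\Vert\aleph\mathtt{u}_1\Vert\ge\Vert\mathtt{u}_1\Vert$ on $\mathtt{E}\cap\partial\mathtt{F}$ then feed directly into Theorem \ref{t41}, producing a fixed point $\mathtt{u}_1\in\mathtt{E}\cap(\overline{\mathtt{G}}\setminus\mathtt{F})$ with $c_1\le\Vert\mathtt{u}_1\Vert\le c_2$; setting $\mathtt{u}_{\mathtt{n}+1}=\mathtt{u}_1$ and reading off the remaining components iteratively yields the asserted $\mathtt{n}$-tuple.

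For the upper estimate, on $\partial\mathtt{G}$ one has $0\le\mathtt{u}_1\le c_2$, so $(\mathcal{J}_7)$ gives $\mathtt{g}_\mathtt{n}(\mathtt{u}_1(\mathtt{t}_\mathtt{n}))\le\mathfrak{M}_2c_2$. Invoking Lemma \ref{l22}(ii) to pass from $\Xi_{r_0}(\mathtt{t}_{\mathtt{n}-1},\mathtt{t}_\mathtt{n})$ to $\Xi_{r_0}(\mathtt{t}_\mathtt{n},\mathtt{t}_\mathtt{n})$ and then substituting the explicit form of $\ell$ reduces the innermost integral to a multiple of $\int_0^1\widehat{\Xi}_{r_0}(\mathtt{t}_\mathtt{n})\prod_{i=1}^{m}\ell_i(\mathtt{t}_\mathtt{n})\,\mathtt{d}\mathtt{t}_\mathtt{n}$. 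The crux of the theorem is the estimate of this integral in the present regime: since $\sum_{i=1}^{m}\frac{1}{\mathtt{p}_i}>1$ and $[0,1]$ carries unit measure, each $\ell_i$ lies in $L^{1}[0,1]$, and extracting $\widehat{\Xi}_{r_0}$ in the supremum norm and applying the $L^1$--$L^\infty$ part of Theorem \ref{holder} bounds it by $\Vert\widehat{\Xi}_{r_0}\Vert_\infty\prod_{i=1}^{m}\Vert\ell_i\Vert_1$, i.e. by $\mathfrak{M}_2^{-1}(\mathtt{N}-2)^2r_0^{-2}$. Hence the innermost integral is $\le c_2$, and the same bootstrapping over the remaining $\mathtt{n}-1$ integrals propagates this to $(\aleph\mathtt{u}_1)(\mathtt{s})\le c_2=\Vert\mathtt{u}_1\Vert$.

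The lower estimate requires no new idea and is identical to its counterpart in Theorem \ref{t43}: the constant $\mathfrak{Q}_1$ appearing in $(\mathcal{J}_5)$ is built only from the pointwise lower bounds $\ell_i^\star$ of $(\mathcal{J}_3)$ and the plain integral $\int_0^1\Xi_{r_0}(\mathtt{t},\mathtt{t})\mathtt{t}^{\frac{2(\mathtt{N}-1)}{2-\mathtt{N}}}\,\mathtt{d}\mathtt{t}$, so no H\"older pairing enters. Using Lemma \ref{l22}(iii) to insert the factor $\wp$, together with $(\mathcal{J}_5)$ with $c_1$ in the role of $a_1$, shows the innermost integral is $\ge c_1$, and bootstrapping again gives $(\aleph\mathtt{u}_1)(\mathtt{s})\ge c_1=\Vert\mathtt{u}_1\Vert$ on $\mathtt{E}\cap\partial\mathtt{F}$. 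I expect the only genuinely delicate point to be the product estimate $\int_0^1\widehat{\Xi}_{r_0}\prod_i\ell_i\le\Vert\widehat{\Xi}_{r_0}\Vert_\infty\prod_i\Vert\ell_i\Vert_1$ in the second paragraph: one must verify carefully that the finite-measure embedding $L^{\mathtt{p}_i}[0,1]\hookrightarrow L^1[0,1]$ together with the $L^1$--$L^\infty$ form of H\"older legitimately controls the product $\prod_i\ell_i$ in this regime, playing here the role that the generalized H\"older inequality played for $\sum_i\frac{1}{\mathtt{p}_i}<1$ in Theorem \ref{t43} and that the $L^1$--$L^\infty$ pairing played for $\sum_i\frac{1}{\mathtt{p}_i}=1$ in Theorem \ref{t44}.
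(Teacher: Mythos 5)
Your skeleton is faithful to what the paper intends: Theorem \ref{t45} is stated there without proof, as the analogue of Theorem \ref{t43} in the regime $\sum_{i=1}^{m}1/\mathtt{p}_i>1$, and your choice of $\mathtt{F}=\{\Vert\mathtt{u}\Vert<c_1\}$, $\mathtt{G}=\{\Vert\mathtt{u}\Vert<c_2\}$, the appeal to Theorem \ref{t41}, the bootstrapping through the $\mathtt{n}$ nested integrals, and the lower estimate via $\wp$ and $\ell_i^\star$ reproduce the proof of Theorem \ref{t43} exactly. However, the single step that is genuinely new --- the one you yourself single out as the crux --- is invalid. The $L^1$--$L^\infty$ part of Theorem \ref{holder} gives
$$
\int_0^1\widehat{\Xi}_{r_0}(\mathtt{t})\prod_{i=1}^{m}\ell_i(\mathtt{t})\,\mathtt{d}\mathtt{t}\le\Vert\widehat{\Xi}_{r_0}\Vert_\infty\Big\Vert\prod_{i=1}^{m}\ell_i\Big\Vert_1,
$$
and to arrive at the constant $\mathfrak{M}_2^{-1}$ you must further claim $\big\Vert\prod_{i=1}^{m}\ell_i\big\Vert_1\le\prod_{i=1}^{m}\Vert\ell_i\Vert_1$. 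That inequality is false: $L^1[0,1]$ is not closed under pointwise multiplication and its norm is not submultiplicative, and the finite-measure embeddings $L^{\mathtt{p}_i}[0,1]\hookrightarrow L^1[0,1]$ only make the individual norms finite, not the norm of the product. Concretely, take $m=2$, $\ell_1(\mathtt{t})=\ell_2(\mathtt{t})=\mathtt{t}^{-1/2}$, $\mathtt{p}_1=\mathtt{p}_2=\tfrac{3}{2}$: then $(\mathcal{J}_2)$, $(\mathcal{J}_3)$ hold and $\sum_i 1/\mathtt{p}_i=\tfrac{4}{3}>1$, yet $\ell_1\ell_2(\mathtt{t})=\mathtt{t}^{-1}\notin L^1[0,1]$, so the left side above is $+\infty$ while $\prod_i\Vert\ell_i\Vert_1=4$. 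Even when the product is integrable the inequality can reverse: for $\ell_1=\ell_2=\mathtt{t}^{-2/5}$ one has $\Vert\ell_1\ell_2\Vert_1=5>\tfrac{25}{9}=\Vert\ell_1\Vert_1\Vert\ell_2\Vert_1$. Indeed, in the regime $\sum_i 1/\mathtt{p}_i>1$ no bound on $\big\Vert\prod_i\ell_i\big\Vert_1$ in terms of the separate norms $\Vert\ell_i\Vert_{\mathtt{p}_i}$ (or $\Vert\ell_i\Vert_1$) can exist at all, since the product need not be integrable. Consequently your upper estimate, and with it the conclusion $\Vert\aleph\mathtt{u}_1\Vert\le\Vert\mathtt{u}_1\Vert$ on $\mathtt{E}\cap\partial\mathtt{G}$, is not established.

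A correct argument must hypothesize control of the product itself rather than of its factors: if one replaces $\prod_{i=1}^{m}\Vert\ell_i\Vert_1$ by $\big\Vert\prod_{i=1}^{m}\ell_i\big\Vert_1$ in the definition of $\mathfrak{M}_2$ (a quantity which is finite under the paper's standing assumption that the weight attached to the original equation is integrable), then the $L^1$--$L^\infty$ pairing is legitimate and the remainder of your bootstrapping argument goes through verbatim. To be fair, this defect is inherited from the statement of Theorem \ref{t45} itself, which the paper leaves unproved; your reconstruction follows the intended route, but the route breaks at precisely the point where you placed your confidence.
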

\begin{example}
	Consider the following nonlinear elliptic system of equations,
	\begin{equation}\label{e4}
			\triangle{  \mathtt{u}_{{\dot{\iota}} }}+\frac{(\mathtt{N}-2)^2r_0^{2\mathtt{N}-2}}{\vert x\vert^{2\mathtt{N}-2}}\mathtt{u}_{\dot{\iota}} +\ell(\vert x\vert)\mathtt{g}_{{\dot{\iota}} }(\mathtt{u}_{{\dot{\iota}} +1})=0,~1<\vert x\vert<3,
	\end{equation}
\begin{equation}\label{e5}\left.
	\begin{aligned}
		&\hskip0.9cm \mathtt{u}_{{\dot{\iota}} }=0~~\text{on}~~\vert x\vert=1~\text{and}~\vert x\vert=3,\\
		&\mathtt{u}_{{\dot{\iota}} }=0~~\text{on}~~\vert x\vert=1~\text{and}~\frac{\partial \mathtt{u}_{\dot{\iota}} }{\partial r}=0~\text{on}~\vert x\vert=3,\\
		&\frac{\partial \mathtt{u}_{\dot{\iota}} }{\partial r}=0~~\text{on}~~\vert x\vert=1~\text{and}~\mathtt{u}_{{\dot{\iota}} }=0~\text{on}~\vert x\vert=3,
	\end{aligned}\right\}
\end{equation}
\end{example}
where $ r_0=1,$ $\mathtt{N}=3,$ ${\dot{\iota}} \in\{1,2\},\, \mathtt{u}_3= \mathtt{u}_1,$  $\ell(\mathtt{s})=\frac{1}{\mathtt{s}^4}\prod_{i=1}^{2}\ell_i(\mathtt{s}),$ $\ell_i(\mathtt{s})=\ell_i\left(\frac{1}{\mathtt{s}}\right),$
in which
$$\ell_1(t)=\frac{1}{t^2+1}~~~~\text{~~and~~}~~~~\ell_2(t)=\frac{1}{\sqrt{t+2}},$$
then it is clear that
$$\ell_1, \ell_2\in L^\mathtt{p}[0,1]~~\text{~~and~~}\prod_{i=1}^{2}\ell_i^*=\sqrt{2}.$$
Let $\mathtt{g}_1(\mathtt{u})=\mathtt{g}_2(\mathtt{u})=1+\frac{1}{5}\cos(1+\mathtt{u})+\frac{1}{1+\mathtt{u}}.$ Let $\upalpha=\upbeta=\gamma=\updelta=1,$   $\varrho=2\cosh(1)+2\sinh(1)\approx5.436563658,$ 
$$
\begin{aligned}
	\Xi_{r_0}(\mathtt{s},\mathtt{t})=\frac{1}{2\cosh(1)+2\sinh(1)}\left\{\begin{array}{ll}\big(\sinh(\mathtt{s})+\cosh(\mathtt{s})\big)\big(\sinh(1-\mathtt{t})+\cosh(1-\mathtt{t})\big), \hskip0.6cm 0  \le \mathtt{s}\le \mathtt{t}\le 1,\vspace{1.2mm}\\	
	\big(\sinh(\mathtt{t})+\cosh(\mathtt{t})\big)\big(\sinh(1-\mathtt{s})+\cosh(1-\mathtt{s})\big),
		\hskip0.6cm  0  \le \mathtt{t}\le \mathtt{s}\le 1,
	\end{array}
	\right.
\end{aligned}
$$
and $\wp=\frac{1}{\sinh(1)+\cosh(1)}.$ 
Also, 
$$
\begin{aligned}
	\mathfrak{Q}_1=\left[\frac{\wp r_0^2}{(\mathtt{N}-2)^2}\prod_{i=1}^{m}\ell_i^\star\int_{0}^{1}\Xi_{r_0}(\mathtt{t}, \mathtt{t})\mathtt{t}^\frac{2(\mathtt{N}-1)}{2-\mathtt{N}} \mathtt{d}\mathtt{t}\right]^{-1}\approx0.1153270463\times10^{-4}.
\end{aligned}
$$
Let $\mathtt{p}_1=2,\mathtt{p}_2=3$ and $\mathtt{q}=6,$ then $\frac{1}{\mathtt{p}_1}+\frac{1}{\mathtt{p}_2}+\frac{1}{\mathtt{q}}=1$ and
$$
\begin{aligned}
\mathfrak{Q}_2=\left[\frac{ r_0^2}{(\mathtt{N}-2)^2}\Vert\widehat{\Xi}_{r_0}\Vert_\mathtt{q}\prod_{i=1}^{m}\Vert\ell_i\Vert_{\mathtt{p}_i}\right]^{-1}\approx0.4577977612\times10^{-7}.
\end{aligned}
$$
Choose $a_1=10^3$ and $a_2=10^8.$ Then,
$$
\begin{aligned}
\mathtt{g}_1(\mathtt{u})=\mathtt{g}_2(\mathtt{u})&\,=1+\frac{1}{5}\cos(1+\mathtt{u})+\frac{1}{1+\mathtt{u}}\le4.577977612=\mathfrak{Q}_2a_2,\,\mathtt{u}\in[0,10^8],\\
\mathtt{g}_1(\mathtt{u})=\mathtt{g}_2(\mathtt{u})&\,=1+\frac{1}{5}\cos(1+\mathtt{u})+\frac{1}{1+\mathtt{u}}\ge0.011532704=\mathfrak{Q}_1a_1,\,\mathtt{u}\in[0,10^3].
\end{aligned}
$$
Therefore, by Theorem \ref{t43}, the boundary value problem \eqref{e4}--\eqref{e5} has at least one positive solution $(\mathtt{u}_1,\mathtt{u}_2)$ such that $10^3\le\Vert  \mathtt{u}_{\dot{\iota}} \Vert\le10^8$ for ${\dot{\iota}} =1,2.$
\section{Existence of at Least  Two Positive Radial Solutions}
In this section, we establish the existence of at least two positive radial solutions for the system \eqref{eq13}--\eqref{eq130} by an application of following Avery-Henderson fixed point theorem. 

Let $\uppsi$ be a nonnegative continuous functional on a cone $\mathtt{E}$ of the real Banach space $\mathcal{B}.$ Then for a positive real numbers $a'$ and $c',$ we define the sets
$$\mathtt{E}(\uppsi,c')=\{\mathtt{u}\in\mathtt{E}:\uppsi(\mathtt{u})<c'\},$$
and
$$\mathtt{E}_{a'}=\{\mathtt{u}\in\mathtt{E}:\Vert\mathtt{u}\Vert<a'\}.$$
\begin{theorem}(Avery-Henderson\cite{avery})\label{t411}
	Let $\mathtt{E}$ be a cone in a real Banach space $\mathtt{B}.$ Suppose $\ss_1$ and $\ss_2$ are increasing, nonnegative continuous functionals on $\mathtt{E}$ and $\ss_3$ is nonnegative continuous functional on $\mathtt{E}$ with $\ss_3(0)=0$ such that, for some positive numbers $c'$ and $k,$ $\ss_2(\mathtt{u})\le\ss_3(\mathtt{u})\le\ss_1(\mathtt{u})$ and $\Vert\mathtt{u}\Vert\le k\ss_2(\mathtt{u}),$ for all $\mathtt{u}\in \overline{\mathtt{E}(\ss_2,c')}.$ Suppose that there exist positive numbers $a'$ and $b'$ with $a'<b'<c'$ such that $\ss_3(\uplambda\mathtt{u})\le\uplambda\ss_3(\mathtt{u}),$ for all $0\le\uplambda\le 1$ and $\mathtt{u}\in\partial\mathtt{E}(\ss_3,b').$ Further, let $\aleph :\overline{\mathtt{E}(\ss_2,c')}\to\mathtt{E}$ be a completely continuous operator such that
	\begin{itemize}
		\item [$(a)$] $\ss_2(\aleph \mathtt{u})>c',$ for all $\mathtt{u}\in \partial\mathtt{E}(\ss_2,c'),$
		\item [$(b)$] $\ss_3(\aleph \mathtt{u})<b',$ for all $\mathtt{u}\in \partial\mathtt{E}(\ss_3,b'),$
		\item [$(c)$] $\mathtt{E}(\ss_1,a')\ne\emptyset$ and $\ss_1(\aleph \mathtt{u})>a',$ for all $\partial\mathtt{E}(\ss_1,a').$
	\end{itemize}
	Then, $\aleph $ has at least two fixed points ${}^1\mathtt{u},{}^2\mathtt{u}\in\mathtt{P}(\ss_2,c')$ such that
	$a'<\ss_1({}^1\mathtt{u})$ with $\ss_3({}^1\mathtt{u})<b'$ and $b'<\ss_3({}^2\mathtt{u})$ with $\ss_2({}^2\mathtt{u})<c'.$
\end{theorem}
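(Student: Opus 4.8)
The plan is to establish the theorem through fixed point index theory, by computing the index of $\aleph$ on the three nested, relatively open subsets $\mathtt{E}(\ss_1,a')$, $\mathtt{E}(\ss_3,b')$, $\mathtt{E}(\ss_2,c')$ of the cone and then harvesting a fixed point from each of the two ``annular'' regions via the additivity of the index. First I would dispose of the topological bookkeeping: the hypothesis $\ss_2(\mathtt{u})\le\ss_3(\mathtt{u})\le\ss_1(\mathtt{u})$ on $\overline{\mathtt{E}(\ss_2,c')}$ forces the inclusions $\overline{\mathtt{E}(\ss_1,a')}\subset\mathtt{E}(\ss_3,b')$ and $\overline{\mathtt{E}(\ss_3,b')}\subset\mathtt{E}(\ss_2,c')$, since $\ss_1(\mathtt{u})\le a'$ gives $\ss_3(\mathtt{u})\le a'<b'$ and $\ss_3(\mathtt{u})\le b'$ gives $\ss_2(\mathtt{u})\le b'<c'$; meanwhile the norm control $\Vert\mathtt{u}\Vert\le k\,\ss_2(\mathtt{u})\le kc'$ makes $\overline{\mathtt{E}(\ss_2,c')}$ bounded, so that, $\aleph$ being completely continuous, each index $i(\aleph,\mathtt{E}(\ss_i,\cdot),\mathtt{E})$ is well defined once we assume $\aleph$ has no fixed point on the boundaries in question (otherwise we are already finished).

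The heart of the proof is three index evaluations, one per hypothesis. For $(a)$ I would prove $i(\aleph,\mathtt{E}(\ss_2,c'),\mathtt{E})=0$ by a ``push-out'' homotopy: fix $\mathtt{v}_0\in\mathtt{E}$ with $\ss_2(\mathtt{v}_0)>0$ and set $H(\sigma,\mathtt{u})=\aleph\mathtt{u}+\sigma\mathtt{v}_0$ for $\sigma\ge0$; if $H(\sigma,\mathtt{u})=\mathtt{u}$ on $\partial\mathtt{E}(\ss_2,c')$ then, since $\ss_2$ is increasing and $\aleph\mathtt{u}+\sigma\mathtt{v}_0\ge\aleph\mathtt{u}$, we would get $c'=\ss_2(\mathtt{u})\ge\ss_2(\aleph\mathtt{u})>c'$, a contradiction, while for $\sigma$ large $\Vert H(\sigma,\mathtt{u})\Vert$ exceeds the bound on $\overline{\mathtt{E}(\ss_2,c')}$, forcing the index to vanish. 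Condition $(c)$ is identical with $\ss_1$ in place of $\ss_2$, giving $i(\aleph,\mathtt{E}(\ss_1,a'),\mathtt{E})=0$. For $(b)$ I would prove $i(\aleph,\mathtt{E}(\ss_3,b'),\mathtt{E})=1$ by a ``pull-in'' homotopy $H(\sigma,\mathtt{u})=\sigma\aleph\mathtt{u}$, $0\le\sigma\le1$, to the zero operator: here $\ss_3(0)=0$ rules out a boundary fixed point at $\sigma=0$, and the sub-homogeneity $\ss_3(\uplambda\mathtt{u})\le\uplambda\ss_3(\mathtt{u})$ together with $\ss_3(\aleph\mathtt{u})<b'$ on $\partial\mathtt{E}(\ss_3,b')$ keeps the homotopy fixed-point-free on the boundary, so that the index agrees with that of the constant map $0$, namely $1$.

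With the three indices in hand, additivity over the nested regions finishes the argument. From $i(\aleph,\mathtt{E}(\ss_2,c'),\mathtt{E})=0$ and $i(\aleph,\mathtt{E}(\ss_3,b'),\mathtt{E})=1$ we obtain $i(\aleph,\mathtt{E}(\ss_2,c')\setminus\overline{\mathtt{E}(\ss_3,b')},\mathtt{E})=-1\ne0$, producing a fixed point ${}^2\mathtt{u}$ with $b'<\ss_3({}^2\mathtt{u})$ and $\ss_2({}^2\mathtt{u})<c'$; from $i(\aleph,\mathtt{E}(\ss_3,b'),\mathtt{E})=1$ and $i(\aleph,\mathtt{E}(\ss_1,a'),\mathtt{E})=0$ we obtain $i(\aleph,\mathtt{E}(\ss_3,b')\setminus\overline{\mathtt{E}(\ss_1,a')},\mathtt{E})=1\ne0$, producing a second fixed point ${}^1\mathtt{u}$ with $a'<\ss_1({}^1\mathtt{u})$ and $\ss_3({}^1\mathtt{u})<b'$, which is exactly the asserted conclusion. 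I expect the main obstacle to lie in the rigorous justification of the two index lemmas — above all the index-one computation, where one must verify that the straight-line homotopy to $0$ cannot acquire a fixed point on $\partial\mathtt{E}(\ss_3,b')$; this is precisely the step that consumes the sub-homogeneity of $\ss_3$, the normalization $\ss_3(0)=0$, and (for boundedness and the index-zero steps) the monotonicity of $\ss_1,\ss_2$ together with the control $\Vert\mathtt{u}\Vert\le k\,\ss_2(\mathtt{u})$.
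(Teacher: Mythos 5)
Your proposal proves a result the paper never proves itself: Theorem~\ref{t411} is imported as a ready-made tool, with attribution to Avery and Henderson \cite{avery}, so the only meaningful comparison is with the original source. Measured against that, your argument is essentially the canonical proof: the nesting $\overline{\mathtt{E}(\ss_1,a')}\subset\mathtt{E}(\ss_3,b')$ and $\overline{\mathtt{E}(\ss_3,b')}\subset\mathtt{E}(\ss_2,c')$ from $\ss_2\le\ss_3\le\ss_1$, boundedness from $\Vert\mathtt{u}\Vert\le k\,\ss_2(\mathtt{u})$, index zero on the outer and inner sets via the translation homotopy $\aleph\mathtt{u}+\sigma\mathtt{v}_0$ together with the monotonicity of $\ss_2$ and $\ss_1$, index one on the middle set via the radial homotopy $\sigma\aleph\mathtt{u}$ using $\ss_3(0)=0$ and sub-homogeneity, and finally additivity/excision to harvest one fixed point from each of the two annular regions. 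The structure, the three index values, and the bookkeeping are all correct.

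Two fine points deserve a remark; both are inherited from (and present in) the published proof rather than being errors of your own making. First, in the index-one step you invoke $\ss_3(\sigma\aleph\mathtt{u})\le\sigma\,\ss_3(\aleph\mathtt{u})$, i.e.\ sub-homogeneity at the point $\aleph\mathtt{u}$; the hypothesis as stated licenses this only at points of $\partial\mathtt{E}(\ss_3,b')$, and $\aleph\mathtt{u}$ is generally not such a point (condition $(b)$ forces $\ss_3(\aleph\mathtt{u})<b'$). One either reads the hypothesis as applying whenever the \emph{rescaled} point lands on the boundary, or notes that in applications --- including the present paper, where $\ss_3(\mathtt{u})=\max_{\mathtt{s}\in[0,1]}\mathtt{u}(\mathtt{s})$ is positively homogeneous --- the property holds on all of $\mathtt{E}$. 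Second, escaping $\overline{\mathtt{E}(\ss_3,b')}$ only yields $\ss_3({}^2\mathtt{u})\ge b'$ (and likewise $\ss_1({}^1\mathtt{u})\ge a'$); to obtain the strict inequalities of the conclusion one must read $\partial\mathtt{E}(\uppsi,r)$ as the level set $\{\mathtt{u}\in\mathtt{E}:\uppsi(\mathtt{u})=r\}$, as is standard in this literature, so that equality would place the fixed point where $(b)$ (respectively $(c)$) forbids it. Relatedly, your parenthetical ``otherwise we are already finished'' is not quite accurate --- a fixed point lying on a boundary would not satisfy the strict conclusions --- but it is also superfluous, since each of $(a)$, $(b)$, $(c)$ directly rules out fixed points on its own boundary.
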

Define the nonnegative, increasing, continuous functional $\ss_2, \ss_3,$ and $\ss_1$ by
$$
\begin{aligned}
	\ss_2(\mathtt{u})=\min_{\mathtt{s}\in[0,1]}\mathtt{u}(\mathtt{s}),\,
	\ss_3(\mathtt{u})=\max_{\mathtt{s}\in[0,1]}\mathtt{u}(\mathtt{s}),\,
	\ss_1(\mathtt{u})=\max_{\mathtt{s}\in[0,1]}\mathtt{u}(\mathtt{s}).
\end{aligned}
$$
It is obvious that for each $\mathtt{u}\in\mathtt{E},$
$$\ss_2(\mathtt{u})\le\ss_3(\mathtt{u})=\ss_1(\mathtt{u}).$$
In addition, by Lemma \ref{l21}, for each $\mathtt{u}\in\mathtt{P},$
$$\ss_2(\mathtt{u})\ge\wp\Vert\mathtt{u}\Vert.$$
Thus 
$$\Vert\mathtt{u}\Vert\le\frac{1}{\wp}\ss_2(\mathtt{u})~~\textnormal{for~all}~~\mathtt{u}\in\mathtt{E}.$$
Finally, we also note that
$$\ss_3(\uplambda\mathtt{u})=\uplambda\ss_3(\mathtt{u}),~~0\le\uplambda\le 1~~\textnormal{and}~~\mathtt{u}\in\mathtt{E}.$$
\begin{theorem}\label{t431}
	Assume that $(\mathcal{J}_1)$--$(\mathcal{J}_3)$ hold and Suppose there exist real numbers $a', b'$ and $c'$ with $0<a'<b'<c'$ such that $\mathtt{g}_{\dot{\iota}} ({\dot{\iota}} =1,2,\cdot\cdot\cdot,\mathtt{n})$ satisfies
	\begin{itemize}
		\item [$(\mathcal{J}_{8})$] $\mathtt{g}_{\dot{\iota}} (\mathtt{u})>\frac{ c'}{\Bbbk_1}$, for all $c'\le\mathtt{u}\le\frac{c'}{\wp},$ where $\Bbbk_1=\frac{\wp r_0^2}{(\mathtt{N}-2)^2}\prod_{i=1}^{m}\ell_i^\star\int_{0}^{1}\Xi_{r_0}(\mathtt{t}, \mathtt{t})\mathtt{t}^\frac{2(\mathtt{N}-1)}{2-\mathtt{N}} \mathtt{d}\mathtt{t},$
		\item [$(\mathcal{J}_{9})$] $\mathtt{g}_{\dot{\iota}} (\mathtt{u})<\frac{b'}{\Bbbk_2}$, for all $0\le\mathtt{u}\le\frac{b'}{\wp},$ where $\Bbbk_2=\frac{ r_0^2}{(\mathtt{N}-2)^2}\Vert\widehat{\Xi}_{r_0}\Vert_\mathtt{q}\prod_{i=1}^{m}\Vert\ell_i\Vert_{\mathtt{p}_i},$
		\item [$(\mathcal{J}_{10})$] $\mathtt{g}_{\dot{\iota}} (\mathtt{u})>\frac{a'}{\Bbbk_1}$, for all $a'\le\mathtt{u}\le\frac{a'}{\wp}.$
	\end{itemize}
	Then the boundary value problem \eqref{eq13}--\eqref{eq130} has at least two positive radial solutions $\{({}^1\mathtt{u}_1,{}^1\mathtt{u}_2,\cdot\cdot\cdot,{}^1\mathtt{u}_\mathtt{n})\}$ and  $\{({}^2\mathtt{u}_1,{}^2\mathtt{u}_2,\cdot\cdot\cdot,{}^2\mathtt{u}_\mathtt{n})\}$ satisfying
	$$a'<\ss_1\big({}^1\mathtt{u}_{{\dot{\iota}} }\big)~~\textit{with}~~\ss_3\big({}^1\mathtt{u}_{{\dot{\iota}} }\big)<b',~{\dot{\iota}} =1,2,\cdot\cdot\cdot,\mathtt{n},$$
	and
	$$b'<\ss_3\big({}^2\mathtt{u}_{{\dot{\iota}} }\big)~~\textit{with}~~\ss_2\big({}^2\mathtt{u}_{{\dot{\iota}} }\big)<c',~{\dot{\iota}} =1,2,\cdot\cdot\cdot,\mathtt{n}.$$
\end{theorem}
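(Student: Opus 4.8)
The plan is to apply the Avery--Henderson fixed point theorem (Theorem \ref{t411}) to the operator $\aleph$ on $\overline{\mathtt{E}(\ss_2,c')}$. By Lemma \ref{l24}, $\aleph:\mathtt{E}\to\mathtt{E}$ is completely continuous, and its restriction to $\overline{\mathtt{E}(\ss_2,c')}$ inherits this property. The structural hypotheses of Theorem \ref{t411} on the functionals have already been recorded in the preamble: $\ss_2(\mathtt{u})\le\ss_3(\mathtt{u})=\ss_1(\mathtt{u})$, the estimate $\Vert\mathtt{u}\Vert\le\tfrac1\wp\ss_2(\mathtt{u})$ (so one may take $k=1/\wp$), and the homogeneity $\ss_3(\uplambda\mathtt{u})=\uplambda\ss_3(\mathtt{u})$ for $0\le\uplambda\le1$. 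Hence the whole task reduces to verifying the three growth conditions $(a)$, $(b)$, $(c)$ of Theorem \ref{t411}, which I would match respectively to $(\mathcal{J}_8)$, $(\mathcal{J}_9)$, $(\mathcal{J}_{10})$.

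For condition $(a)$ I would take $\mathtt{u}_1\in\partial\mathtt{E}(\ss_2,c')$, so that $\min_{\mathtt{s}}\mathtt{u}_1(\mathtt{s})=c'$ and hence $c'\le\mathtt{u}_1(\mathtt{s})\le\Vert\mathtt{u}_1\Vert\le c'/\wp$ for every $\mathtt{s}$. This places the argument of the innermost nonlinearity in exactly the range where $(\mathcal{J}_8)$ applies, giving $\mathtt{g}_\mathtt{n}(\mathtt{u}_1(\mathtt{t}_\mathtt{n}))>c'/\Bbbk_1$. The innermost integral is then bounded from below by first invoking Lemma \ref{l22}(iii) in the form $\Xi_{r_0}(\mathtt{t}_{\mathtt{n}-1},\mathtt{t}_\mathtt{n})\ge\wp\,\Xi_{r_0}(\mathtt{t}_\mathtt{n},\mathtt{t}_\mathtt{n})$, then substituting $\ell(\mathtt{t}_\mathtt{n})=\tfrac{r_0^2}{(\mathtt{N}-2)^2}\mathtt{t}_\mathtt{n}^{2(\mathtt{N}-1)/(2-\mathtt{N})}\prod_{i=1}^m\ell_i(\mathtt{t}_\mathtt{n})$ and using $\ell_i\ge\ell_i^\star$; the accumulated constant is precisely $\Bbbk_1$, so the integral exceeds $c'$. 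Propagating this estimate one layer at a time through all $\mathtt{n}$ integrals (the same bootstrapping used in Theorem \ref{t43}) and taking the minimum over $\mathtt{s}$ in the outermost integral with one further application of Lemma \ref{l22}(iii) yields $\ss_2(\aleph\mathtt{u}_1)=\min_{\mathtt{s}}(\aleph\mathtt{u}_1)(\mathtt{s})>c'$.

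Condition $(c)$ is entirely parallel: for $\mathtt{u}_1\in\partial\mathtt{E}(\ss_1,a')$ one has $\Vert\mathtt{u}_1\Vert=a'$, which forces $\wp a'\le\mathtt{u}_1(\mathtt{s})\le a'$, and $(\mathcal{J}_{10})$ supplies the lower bound $\mathtt{g}_{\dot{\iota}}(\mathtt{u})>a'/\Bbbk_1$ on this range; since $\ss_1(\aleph\mathtt{u}_1)=\max_{\mathtt{s}}(\aleph\mathtt{u}_1)(\mathtt{s})\ge\min_{\mathtt{s}}(\aleph\mathtt{u}_1)(\mathtt{s})$, the same chain of estimates gives $\ss_1(\aleph\mathtt{u}_1)>a'$, and nonemptiness of $\mathtt{E}(\ss_1,a')$ is immediate since $0$ lies in it. For condition $(b)$ I would take $\mathtt{u}_1\in\partial\mathtt{E}(\ss_3,b')$, so $0\le\mathtt{u}_1(\mathtt{s})\le\Vert\mathtt{u}_1\Vert=b'\le b'/\wp$ and $(\mathcal{J}_9)$ gives $\mathtt{g}_{\dot{\iota}}(\mathtt{u})<b'/\Bbbk_2$; here the estimate runs in the opposite direction, replacing $\Xi_{r_0}(\mathtt{s},\mathtt{t})$ by $\Xi_{r_0}(\mathtt{t},\mathtt{t})$ via Lemma \ref{l22}(ii) and applying the H\"older inequality of Theorem \ref{holder} to $\int_0^1\widehat{\Xi}_{r_0}(\mathtt{t})\prod_{i=1}^m\ell_i(\mathtt{t})\,\mathtt{d}\mathtt{t}\le\Vert\widehat{\Xi}_{r_0}\Vert_\mathtt{q}\prod_{i=1}^m\Vert\ell_i\Vert_{\mathtt{p}_i}$, whose constant is exactly $\Bbbk_2$; bootstrapping upward then gives $\ss_3(\aleph\mathtt{u}_1)<b'$. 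With $(a)$, $(b)$, $(c)$ verified, Theorem \ref{t411} produces two fixed points ${}^1\mathtt{u}_1,{}^2\mathtt{u}_1$ of $\aleph$ with the stated separation in $\ss_1,\ss_2,\ss_3$, and setting $\mathtt{u}_{\mathtt{n}+1}=\mathtt{u}_1$ recovers the two positive radial solution $\mathtt{n}$-tuples.

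The main obstacle I anticipate is the bookkeeping in the nested integrals rather than any single hard estimate: one must check that at each of the $\mathtt{n}$ layers the value fed into $\mathtt{g}_{\dot{\iota}}$ stays inside the interval on which the relevant hypothesis $(\mathcal{J}_8)$, $(\mathcal{J}_9)$ or $(\mathcal{J}_{10})$ is assumed, and that the factors of $\wp$, $r_0^2/(\mathtt{N}-2)^2$, and the $\ell_i^\star$ (respectively $\Vert\ell_i\Vert_{\mathtt{p}_i}$) collapse at each stage into a clean copy of $\Bbbk_1$ (respectively $\Bbbk_2$), so that the bound reproduces itself and the induction closes. The genuinely delicate point is the simultaneous control of $\min$ and $\max$ of $\aleph\mathtt{u}_1$ through the single outermost integration, which is precisely where the two-sided kernel estimates of Lemma \ref{l22}(ii)--(iii) are indispensable.
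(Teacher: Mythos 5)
Your proposal follows the paper's own proof almost line for line: the same Avery--Henderson framework (Theorem \ref{t411}), the same functionals $\ss_1,\ss_2,\ss_3$, the same matching of $(\mathcal{J}_{8}),(\mathcal{J}_{9}),(\mathcal{J}_{10})$ to conditions $(a),(b),(c)$, and the same layer-by-layer bootstrapping through the nested integrals via Lemma \ref{l22} and the H\"older estimate of Theorem \ref{holder}. In fact your verification of condition $(b)$ is cleaner than the paper's: Lemma \ref{l22}(ii) gives $\Xi_{r_0}(\mathtt{s},\mathtt{t})\le\Xi_{r_0}(\mathtt{t},\mathtt{t})$ with no factor of $\wp$, whereas the paper inserts a spurious $\wp$ in that upper bound (a false strengthening, since $\wp<1$); the paper's chain still terminates at $\wp b'\le b'$, so the conclusion survives, but your version is the correct one.

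The step in your writeup that does not follow as stated is condition $(c)$. You correctly derive that $\mathtt{u}_1\in\partial\mathtt{E}(\ss_1,a')$ forces $\wp a'\le\mathtt{u}_1(\mathtt{s})\le a'$, but you then claim that $(\mathcal{J}_{10})$ supplies the bound $\mathtt{g}_{{\dot{\iota}}}(\mathtt{u})>a'/\Bbbk_1$ \emph{on this range}. It does not: $(\mathcal{J}_{10})$ is hypothesized only for $a'\le\mathtt{u}\le a'/\wp$, and $[\wp a',a']\cap[a',a'/\wp]=\{a'\}$, so the hypothesis says nothing about $\mathtt{g}_{{\dot{\iota}}}$ on essentially the entire interval where $\mathtt{u}_1$ takes its values. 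To be fair, the paper has the identical gap, only disguised: it ``derives'' $a'\le\mathtt{u}_1(\mathtt{s})\le a'/\wp$ through a chain containing the false equality $\Vert\mathtt{u}_1\Vert=\frac{1}{\wp}\ss_2(\mathtt{u}_1)$ (only $\le$ holds, and $a'$ is the maximum of $\mathtt{u}_1$, not its minimum), which conveniently produces exactly the interval on which $(\mathcal{J}_{10})$ is stated. The same looseness infects the bootstrapping you flag yourself at the end: in condition $(a)$, once the innermost integral is shown to be $\ge c'$, applying $(\mathcal{J}_{8})$ at the next layer requires that this value also stay $\le c'/\wp$, and neither you nor the paper establishes such an upper bound (the hypotheses give no upper control on $\mathtt{g}_\mathtt{n}$ over $[c',c'/\wp]$). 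A rigorous repair would restate $(\mathcal{J}_{10})$ on $[\wp a',a']$ and extend $(\mathcal{J}_{8})$, $(\mathcal{J}_{10})$ to hold for all $\mathtt{u}\ge c'$ (respectively $\mathtt{u}\ge a'$), or impose monotonicity of the $\mathtt{g}_{{\dot{\iota}}}$; as written, your proof inherits the paper's defect rather than introducing a new one.
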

\begin{proof}
	We begin by defining the completely continuous operator $\aleph $ by \eqref{eqmain}. So it is easy to check that $\aleph :\overline{\mathtt{E}(\ss_2,c')}\to\mathtt{E}.$ Firstly, we shall verify that condition $(a)$ of Theorem \ref{t411} is satisfied. So, let us choose $\mathtt{u}_1\in\partial\mathtt{E}(\ss_2,c').$ Then $\ss_2(\mathtt{u}_1)=\min_{\mathtt{s}\in[0,1]}\mathtt{u}_1(\mathtt{s})=c'$ this implies that $c'\le\mathtt{u}_1(\mathtt{s})$ for
	$\mathtt{s}\in[0,1].$ Since $\Vert\mathtt{u}_1\Vert\le\frac{1}{\wp}\ss_2(\mathtt{u}_1)=\frac{1}{\wp}c'.$ So we have
	$$c'\le\mathtt{u}_1(\mathtt{s})\le\frac{c'}{\wp},~\mathtt{s}\in[0,1].$$
	Let $\mathtt{t}_{\mathtt{n}-1}\in[0,1].$ Then by $(\mathcal{J}_{8}),$ we have
$$
\begin{aligned}
	\int_{0}^{1}\Xi_{r_0}(\mathtt{t}_{\mathtt{n}-1}, \mathtt{t}_\mathtt{n})\ell(\mathtt{t}_\mathtt{n})\mathtt{g}_\mathtt{n}\big( \mathtt{u}_1(\mathtt{t}_\mathtt{n})\big)\mathtt{d}\mathtt{t}_\mathtt{n}
	&\ge \wp\int_{0}^{1}\Xi_{r_0}(\mathtt{t}_{\mathtt{n}}, \mathtt{t}_\mathtt{n})\ell(\mathtt{t}_\mathtt{n})\mathtt{g}_\mathtt{n}\big( \mathtt{u}_1(\mathtt{t}_\mathtt{n})\big)\mathtt{d}\mathtt{t}_\mathtt{n}\\
	&\ge \frac{\wp c'}{\Bbbk_1}\int_{0}^{1}\Xi_{r_0}(\mathtt{t}_{\mathtt{n}}, \mathtt{t}_\mathtt{n})\ell(\mathtt{t}_\mathtt{n})\mathtt{d}\mathtt{t}_\mathtt{n}\\
	&\ge \frac{\wp c'r_0^2}{(\mathtt{N}-2)^2\Bbbk_1}\int_{0}^{1}\Xi_{r_0}(\mathtt{t}_{\mathtt{n}}, \mathtt{t}_\mathtt{n})\mathtt{t}^\frac{2(\mathtt{N}-1)}{2-\mathtt{N}}_\mathtt{n}\prod_{i=1}^{m}\ell_i(\mathtt{t}_\mathtt{n})\mathtt{d}\mathtt{t}_\mathtt{n}\\
	&\ge \frac{\wp c'r_0^2}{(\mathtt{N}-2)^2\Bbbk_1}\prod_{i=1}^{m}\ell_i^\star\int_{0}^{1}\Xi_{r_0}(\mathtt{t}_{\mathtt{n}}, \mathtt{t}_\mathtt{n})\mathtt{t}^\frac{2(\mathtt{N}-1)}{2-\mathtt{N}}_\mathtt{n} \mathtt{d}\mathtt{t}_\mathtt{n}\\
	&\ge c'.
\end{aligned}
$$
It follows in similar manner for $0<\mathtt{t}_{\mathtt{n}-2}<1,$
$$
\begin{aligned}
	\int_{0}^{1}\Xi_{r_0}(\mathtt{t}_{\mathtt{n}-2}, \mathtt{t}_{\mathtt{n}-1})\ell(\mathtt{t}_{\mathtt{n}-1})\mathtt{g}_{\mathtt{n}-1}&\Bigg[ \int_{0}^{1}\Xi_{r_0}(\mathtt{t}_{\mathtt{n}-1}, \mathtt{t}_\mathtt{n})\ell(\mathtt{t}_\mathtt{n})\mathtt{g}_\mathtt{n}\big( \mathtt{u}_1(\mathtt{t}_\mathtt{n})\big)\mathtt{d}\mathtt{t}_\mathtt{n}\Bigg]\mathtt{d}\mathtt{t}_{\mathtt{n}-1}\\
	&\ge\wp\int_{0}^{1}\Xi_{r_0}(\mathtt{t}_{\mathtt{n}-1}, \mathtt{t}_{\mathtt{n}-1})\ell(\mathtt{t}_{\mathtt{n}-1})\mathtt{g}_{\mathtt{n}-1}(c')\mathtt{d}\mathtt{t}_{\mathtt{n}-1}\\
	&\ge \frac{\wp c'}{\Bbbk_1}\int_{0}^{1}\Xi_{r_0}(\mathtt{t}_{\mathtt{n}-1}, \mathtt{t}_{\mathtt{n}-1})\ell(\mathtt{t}_{\mathtt{n}-1})\mathtt{d}\mathtt{t}_{\mathtt{n}-1}\\
	&\ge \frac{\wp c'r_0^2}{(\mathtt{N}-2)^2\Bbbk_1}\int_{0}^{1}\Xi_{r_0}(\mathtt{t}_{\mathtt{n}-1}, \mathtt{t}_{\mathtt{n}-1})\mathtt{t}^\frac{2(\mathtt{N}-1)}{2-\mathtt{N}}_{\mathtt{n}-1}\prod_{i=1}^{m}\ell_i(\mathtt{t}_{\mathtt{n}-1})\mathtt{d}\mathtt{t}_{\mathtt{n}-1}\\
	&\ge \frac{\wp c'r_0^2}{(\mathtt{N}-2)^2\Bbbk_1}\prod_{i=1}^{m}\ell_i^\star\int_{0}^{1}\Xi_{r_0}(\mathtt{t}_{\mathtt{n}-1}, \mathtt{t}_{\mathtt{n}-1})\mathtt{t}^\frac{2(\mathtt{N}-1)}{2-\mathtt{N}}_{\mathtt{n}-1} \mathtt{d}\mathtt{t}_{\mathtt{n}-1}\\
	&\ge c'.
\end{aligned}
$$
Continuing with bootstrapping argument, we get
$$
\begin{aligned}
\ss_2\left(\aleph  \mathtt{u}_1\right)=&\min_{\mathtt{s}\in[0,1]}\int_{0}^{1}\Xi_{r_0}(\mathtt{s}, \mathtt{t}_1)\ell(\mathtt{t}_1)\mathtt{g}_1\Bigg[ \int_{0}^{1}\Xi_{r_0}(\mathtt{t}_1, \mathtt{t}_2)\ell(\mathtt{t}_2)\mathtt{g}_2\Bigg[ \int_{0}^{1}\Xi_{r_0}(\mathtt{t}_2, \mathtt{t}_3)\ell(\mathtt{t}_3)\mathtt{g}_4 \cdots\\
	&\hskip1.6cm\mathtt{g}_{\mathtt{n}-1}\Bigg[ \int_{0}^{1}\Xi_{r_0}(\mathtt{t}_{\mathtt{n}-1}, \mathtt{t}_\mathtt{n})\ell(\mathtt{t}_\mathtt{n})\mathtt{g}_\mathtt{n}\big( \mathtt{u}_1(\mathtt{t}_\mathtt{n})\big)\mathtt{d}\mathtt{t}_\mathtt{n}\Bigg]\cdot\cdot\cdot \Bigg]\mathtt{d}\mathtt{t}_3\Bigg]\mathtt{d}\mathtt{t}_2\Bigg]\mathtt{d}\mathtt{t}_1\\
	\ge&\,c'.
\end{aligned}
$$
	This proves $(i)$ of Theorem \ref{t411}. We next address $(ii)$ of Theorem \ref{t411}. So, we choose $\mathtt{u}_1\in\partial\mathtt{E}(\ss_3,b').$ Then $\ss_3(\mathtt{u}_1)=\max_{\mathtt{s}\in[0,1]}\mathtt{u}_1(\mathtt{s})=b'$ this implies that $0\le\mathtt{u}_1(\mathtt{s})\le b'$ for
	$\mathtt{s}\in[0,1].$ Since $\Vert\mathtt{u}_1\Vert\le\frac{1}{\wp}\ss_2(\mathtt{u}_1)\le\frac{1}{\wp}\ss_3(\mathtt{u}_1)=\frac{b'}{\wp}.$ So we have
	$$0\le\mathtt{u}_1(\mathtt{s})\le\wp^2b',~\mathtt{s}\in[0,1].$$
	Let $0<\mathtt{t}_{\mathtt{n}-1}<1.$ Then by $(\mathcal{J}_{9}),$ we have
	$$
\begin{aligned}
	\int_{0}^{1}\Xi_{r_0}(\mathtt{t}_{\mathtt{n}-1}, \mathtt{t}_\mathtt{n})\ell(\mathtt{t}_\mathtt{n})\mathtt{g}_\mathtt{n}\big( \mathtt{u}_1(\mathtt{t}_\mathtt{n})\big)\mathtt{d}\mathtt{t}_\mathtt{n}
	&\le \wp\int_{0}^{1}\Xi_{r_0}(\mathtt{t}_{\mathtt{n}}, \mathtt{t}_\mathtt{n})\ell(\mathtt{t}_\mathtt{n})\mathtt{g}_\mathtt{n}\big( \mathtt{u}_1(\mathtt{t}_\mathtt{n})\big)\mathtt{d}\mathtt{t}_\mathtt{n}\\
	&\le \frac{\wp b'}{\Bbbk_2}\int_{0}^{1}\Xi_{r_0}(\mathtt{t}_{\mathtt{n}}, \mathtt{t}_\mathtt{n})\ell(\mathtt{t}_\mathtt{n})\mathtt{d}\mathtt{t}_\mathtt{n}\\
	&\le \frac{\wp b' r_0^2}{(\mathtt{N}-2)^2\Bbbk_2}\int_{0}^{1}\Xi_{r_0}(\mathtt{t}_{\mathtt{n}}, \mathtt{t}_\mathtt{n})\mathtt{t}^\frac{2(\mathtt{N}-1)}{2-\mathtt{N}}_\mathtt{n}\prod_{i=1}^{m}\ell_i(\mathtt{t}_\mathtt{n})\mathtt{d}\mathtt{t}_\mathtt{n}.
\end{aligned}
$$
There exists a $\mathtt{q}>1$ such that $\displaystyle\sum_{i=1}^{m}\frac{1}{\mathtt{p}_i}+\frac{1}{\mathtt{q}}=1.$ By the first part of Theorem \ref{holder}, we have
$$
\begin{aligned}
	\int_{0}^{1}\Xi_{r_0}(\mathtt{t}_{\mathtt{n}-1}, \mathtt{t}_\mathtt{n})\ell(\mathtt{t}_\mathtt{n})\mathtt{g}_\mathtt{n}\big( \mathtt{u}_1(\mathtt{t}_\mathtt{n})\big)\mathtt{d}\mathtt{t}_\mathtt{n}&\le \frac{\wp b' r_0^2}{(\mathtt{N}-2)^2\Bbbk_2}\Vert\widehat{\Xi}_{r_0}\Vert_\mathtt{q}\prod_{i=1}^{m}\Vert\ell_i\Vert_{\mathtt{p}_i}\\
	&\le b'.
\end{aligned}
$$
Continuing with this bootstrapping argument, we get
$$
\begin{aligned}
	\ss_3\left(\aleph  \mathtt{u}_1\right)=&\max_{\mathtt{s}\in[0,1]} \int_{0}^{1}\Xi_{r_0}(\mathtt{s}, \mathtt{t}_1)\ell(\mathtt{t}_1)\mathtt{g}_1\Bigg[ \int_{0}^{1}\Xi_{r_0}(\mathtt{t}_1, \mathtt{t}_2)\ell(\mathtt{t}_2)\mathtt{g}_2\Bigg[ \int_{0}^{1}\Xi_{r_0}(\mathtt{t}_2, \mathtt{t}_3)\ell(\mathtt{t}_3)\mathtt{g}_4 \cdots\\
	&\hskip1cm\mathtt{g}_{\mathtt{n}-1}\Bigg[ \int_{0}^{1}\Xi_{r_0}(\mathtt{t}_{\mathtt{n}-1}, \mathtt{t}_\mathtt{n})\ell(\mathtt{t}_\mathtt{n})\mathtt{g}_\mathtt{n}\big( \mathtt{u}_1(\mathtt{t}_\mathtt{n})\big)\mathtt{d}\mathtt{t}_\mathtt{n}\Bigg]\cdot\cdot\cdot \Bigg]\mathtt{d}\mathtt{t}_3\Bigg]\mathtt{d}\mathtt{t}_2\Bigg]\mathtt{d}\mathtt{t}_1\\
	\le&\,b'.
\end{aligned}
$$
	Hence condition $(b)$ is satisfied. Finally, we verify that $(c)$ of Theorem \ref{t411} is also satisfied. We note that $\mathtt{u}_1(\mathtt{s})=a'/4,~\mathtt{s}\in[0,1]$ is a
	member of $\mathtt{E}(\ss_1,a')$ and $a'/4<a'.$ So $\mathtt{E}(\ss_1,a')\ne\emptyset.$ Now let $\mathtt{u}_1\in\mathtt{E}(\ss_1,a').$ Then $a'=\ss_1(\mathtt{u}_1)=\max_{\mathtt{s}\in[0,1]}\mathtt{u}_1(\mathtt{s})=\Vert \mathtt{u}_1\Vert=\frac{1}{\wp}\ss_2(\mathtt{u}_1)\le\frac{1}{\wp}\ss_3(\mathtt{u}_1)=\frac{1}{\wp}\ss_1(\mathtt{u}_1)=\frac{a'}{\wp},$ i.e., $a'\le\mathtt{u}_1(\mathtt{s})\le\frac{a'}{\wp}$ for $\mathtt{s}\in[0,1].$ 
	Let $0<\mathtt{t}_{\mathtt{n}-1}<1.$ Then by $(\mathcal{J}_{10}),$ we have
	$$
	\begin{aligned}
		\int_{0}^{1}\Xi_{r_0}(\mathtt{t}_{\mathtt{n}-1}, \mathtt{t}_\mathtt{n})\ell(\mathtt{t}_\mathtt{n})\mathtt{g}_\mathtt{n}\big( \mathtt{u}_1(\mathtt{t}_\mathtt{n})\big)\mathtt{d}\mathtt{t}_\mathtt{n}
		&\ge \wp\int_{0}^{1}\Xi_{r_0}(\mathtt{t}_{\mathtt{n}}, \mathtt{t}_\mathtt{n})\ell(\mathtt{t}_\mathtt{n})\mathtt{g}_\mathtt{n}\big( \mathtt{u}_1(\mathtt{t}_\mathtt{n})\big)\mathtt{d}\mathtt{t}_\mathtt{n}\\
		&\ge \frac{\wp a'}{\Bbbk_1}\int_{0}^{1}\Xi_{r_0}(\mathtt{t}_{\mathtt{n}}, \mathtt{t}_\mathtt{n})\ell(\mathtt{t}_\mathtt{n})\mathtt{d}\mathtt{t}_\mathtt{n}\\
		&\ge \frac{\wp a'r_0^2}{(\mathtt{N}-2)^2\Bbbk_1}\int_{0}^{1}\Xi_{r_0}(\mathtt{t}_{\mathtt{n}}, \mathtt{t}_\mathtt{n})\mathtt{t}^\frac{2(\mathtt{N}-1)}{2-\mathtt{N}}_\mathtt{n}\prod_{i=1}^{m}\ell_i(\mathtt{t}_\mathtt{n})\mathtt{d}\mathtt{t}_\mathtt{n}\\
		&\ge \frac{\wp a'r_0^2}{(\mathtt{N}-2)^2\Bbbk_1}\prod_{i=1}^{m}\ell_i^\star\int_{0}^{1}\Xi_{r_0}(\mathtt{t}_{\mathtt{n}}, \mathtt{t}_\mathtt{n})\mathtt{t}^\frac{2(\mathtt{N}-1)}{2-\mathtt{N}}_\mathtt{n} \mathtt{d}\mathtt{t}_\mathtt{n}\\
		&\ge a'.
	\end{aligned}
	$$
	Continuing with this bootstrapping argument, we get
$$
\begin{aligned}
	\ss_1\left(\aleph  \mathtt{u}_1\right)=&\max_{\mathtt{s}\in[0,1]} \int_{0}^{1}\Xi_{r_0}(\mathtt{s}, \mathtt{t}_1)\ell(\mathtt{t}_1)\mathtt{g}_1\Bigg[ \int_{0}^{1}\Xi_{r_0}(\mathtt{t}_1, \mathtt{t}_2)\ell(\mathtt{t}_2)\mathtt{g}_2\Bigg[ \int_{0}^{1}\Xi_{r_0}(\mathtt{t}_2, \mathtt{t}_3)\ell(\mathtt{t}_3)\mathtt{g}_4 \cdots\\
	&\hskip1cm\mathtt{g}_{\mathtt{n}-1}\Bigg[ \int_{0}^{1}\Xi_{r_0}(\mathtt{t}_{\mathtt{n}-1}, \mathtt{t}_\mathtt{n})\ell(\mathtt{t}_\mathtt{n})\mathtt{g}_\mathtt{n}\big( \mathtt{u}_1(\mathtt{t}_\mathtt{n})\big)\mathtt{d}\mathtt{t}_\mathtt{n}\Bigg]\cdot\cdot\cdot \Bigg]\mathtt{d}\mathtt{t}_3\Bigg]\mathtt{d}\mathtt{t}_2\Bigg]\mathtt{d}\mathtt{t}_1\\
	\ge&\min_{\mathtt{s}\in[0,1]} \int_{0}^{1}\Xi_{r_0}(\mathtt{s}, \mathtt{t}_1)\ell(\mathtt{t}_1)\mathtt{g}_1\Bigg[ \int_{0}^{1}\Xi_{r_0}(\mathtt{t}_1, \mathtt{t}_2)\ell(\mathtt{t}_2)\mathtt{g}_2\Bigg[ \int_{0}^{1}\Xi_{r_0}(\mathtt{t}_2, \mathtt{t}_3)\ell(\mathtt{t}_3)\mathtt{g}_4 \cdots\\
	&\hskip1cm\mathtt{g}_{\mathtt{n}-1}\Bigg[ \int_{0}^{1}\Xi_{r_0}(\mathtt{t}_{\mathtt{n}-1}, \mathtt{t}_\mathtt{n})\ell(\mathtt{t}_\mathtt{n})\mathtt{g}_\mathtt{n}\big( \mathtt{u}_1(\mathtt{t}_\mathtt{n})\big)\mathtt{d}\mathtt{t}_\mathtt{n}\Bigg]\cdot\cdot\cdot \Bigg]\mathtt{d}\mathtt{t}_3\Bigg]\mathtt{d}\mathtt{t}_2\Bigg]\mathtt{d}\mathtt{t}_1\\
	\ge&\,a'.
\end{aligned}
$$
	Thus condition $(c)$ of Theorem \ref{t411} is satisfied. Since all hypotheses of Theorem \ref{t411} are satisfied, the assertion follows.\qed
\end{proof}	
For $\sum_{i=1}^{m}\frac{1}{\mathtt{p}_i}=1$ and $\sum_{i=1}^{m}\frac{1}{\mathtt{p}_i}>1,$ we have following results.
\begin{theorem}
	Assume that $(\mathcal{J}_1)$--$(\mathcal{J}_3)$ hold and Suppose there exist real numbers $a', b'$ and $c'$ with $0<a'<b'<c'$ such that $\mathtt{g}_{\dot{\iota}} ({\dot{\iota}} =1,2,\cdot\cdot\cdot,\mathtt{n})$ satisfies $(\mathcal{J}_{8}),$ $(\mathcal{J}_{10})$ and
	\begin{itemize}
		\item [$(\mathcal{J}_{9}')$] $\mathtt{g}_{\dot{\iota}} (\mathtt{u})<\frac{b'}{\Bbbk_3}$, for all $0\le\mathtt{u}\le\frac{b'}{\wp},$ where $\Bbbk_3=\frac{  r_0^2}{(\mathtt{N}-2)^2}\Vert\widehat{\Xi}_{r_0}\Vert_\infty\prod_{i=1}^{m}\Vert\ell_i\Vert_{\mathtt{p}_i}.$
	\end{itemize}
	Then the boundary value problem \eqref{eq13}--\eqref{eq130} has at least two positive radial solutions $\{({}^1\mathtt{u}_1,{}^1\mathtt{u}_2,\cdot\cdot\cdot,{}^1\mathtt{u}_\mathtt{n})\}$ and  $\{({}^2\mathtt{u}_1,{}^2\mathtt{u}_2,\cdot\cdot\cdot,{}^2\mathtt{u}_\mathtt{n})\}$ satisfying
	$$a'<\ss_1\big({}^1\mathtt{u}_{{\dot{\iota}} }\big)~~\textit{with}~~\ss_3\big({}^1\mathtt{u}_{{\dot{\iota}} }\big)<b',~{\dot{\iota}} =1,2,\cdot\cdot\cdot,\mathtt{n},$$
	and
	$$b'<\ss_3\big({}^2\mathtt{u}_{{\dot{\iota}} }\big)~~\textit{with}~~\ss_2\big({}^2\mathtt{u}_{{\dot{\iota}} }\big)<c',~{\dot{\iota}} =1,2,\cdot\cdot\cdot,\mathtt{n}.$$
\end{theorem}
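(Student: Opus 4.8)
The plan is to reuse \emph{verbatim} the architecture of the proof of Theorem \ref{t431}, applying the Avery--Henderson fixed point theorem (Theorem \ref{t411}) with the very same functionals $\ss_1,\ss_2,\ss_3$ and the same operator $\aleph$ of \eqref{eqmain}. The underlying cone $\mathtt{E}$, the relations $\ss_2(\mathtt{u})\le\ss_3(\mathtt{u})=\ss_1(\mathtt{u})$ and $\Vert\mathtt{u}\Vert\le\frac{1}{\wp}\ss_2(\mathtt{u})$, together with the homogeneity $\ss_3(\uplambda\mathtt{u})=\uplambda\ss_3(\mathtt{u})$ for $0\le\uplambda\le1$, are all independent of the value of $\sum_{i=1}^{m}1/\mathtt{p}_i$; so the only real work is to re-verify conditions $(a)$, $(b)$, $(c)$ of Theorem \ref{t411} under the present hypotheses $(\mathcal{J}_8)$, $(\mathcal{J}_9')$, $(\mathcal{J}_{10})$.

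First I would note that conditions $(a)$ and $(c)$ carry over word for word from Theorem \ref{t431}. The hypotheses $(\mathcal{J}_8)$ and $(\mathcal{J}_{10})$ are unchanged, and the lower estimates behind them rely only on Lemma \ref{l22}(iii), on the factorization $\ell(\mathtt{t})=\frac{r_0^2}{(\mathtt{N}-2)^2}\mathtt{t}^{2(\mathtt{N}-1)/(2-\mathtt{N})}\prod_{i=1}^{m}\ell_i(\mathtt{t})$, and on the pointwise bound $\ell_i^\star<\ell_i$ from $(\mathcal{J}_3)$, none of which invokes a conjugate exponent. Hence the same bootstrapping up the nested integrals yields $\ss_2(\aleph\mathtt{u}_1)\ge c'$ on $\partial\mathtt{E}(\ss_2,c')$ and $\ss_1(\aleph\mathtt{u}_1)\ge a'$ on $\partial\mathtt{E}(\ss_1,a')$.

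The single step that changes, and the crux of the argument, is the verification of condition $(b)$. For $\mathtt{u}_1\in\partial\mathtt{E}(\ss_3,b')$ one has $0\le\mathtt{u}_1(\mathtt{s})\le\frac{1}{\wp}\ss_3(\mathtt{u}_1)=\frac{b'}{\wp}$, so $(\mathcal{J}_9')$ gives $\mathtt{g}_\mathtt{n}(\mathtt{u}_1(\mathtt{t}_\mathtt{n}))<\frac{b'}{\Bbbk_3}$. Applying Lemma \ref{l22}(ii) to bound $\Xi_{r_0}(\mathtt{t}_{\mathtt{n}-1},\mathtt{t}_\mathtt{n})$ by $\Xi_{r_0}(\mathtt{t}_\mathtt{n},\mathtt{t}_\mathtt{n})$ and factoring out $\ell$ reduces the innermost integral to $\int_0^1\widehat{\Xi}_{r_0}(\mathtt{t}_\mathtt{n})\prod_{i=1}^{m}\ell_i(\mathtt{t}_\mathtt{n})\,\mathtt{d}\mathtt{t}_\mathtt{n}$. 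Here, in place of the conjugate-exponent (first) part of Theorem \ref{holder} used when $\sum 1/\mathtt{p}_i<1$, I would invoke its \emph{second} part: since $\sum_{i=1}^{m}1/\mathtt{p}_i\ge1$ the product $\prod_{i=1}^{m}\ell_i$ belongs to $L^1[0,1]$ while $\widehat{\Xi}_{r_0}\in L^\infty[0,1]$, whence $\int_0^1\widehat{\Xi}_{r_0}\prod_{i=1}^{m}\ell_i\le\Vert\widehat{\Xi}_{r_0}\Vert_\infty\prod_{i=1}^{m}\Vert\ell_i\Vert_{\mathtt{p}_i}$. Multiplying by $\frac{r_0^2}{(\mathtt{N}-2)^2}$ and using the definition of $\Bbbk_3$ returns the innermost bound $\le b'$, and repeating this through the remaining $\mathtt{n}-1$ layers delivers $\ss_3(\aleph\mathtt{u}_1)\le b'$, which is condition $(b)$.

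With $(a)$, $(b)$, $(c)$ established, Theorem \ref{t411} supplies two fixed points ${}^1\mathtt{u}_1,{}^2\mathtt{u}_1$ of $\aleph$ in $\mathtt{E}(\ss_2,c')$ satisfying the stated separation inequalities; setting $\mathtt{u}_{\mathtt{n}+1}=\mathtt{u}_1$ and reading off the remaining components iteratively through \eqref{eqmain} produces the two radial solution tuples. The only point requiring care, which I regard as the main obstacle, is the correct deployment of the $L^1$--$L^\infty$ form of Hölder's inequality uniformly across both the $\sum 1/\mathtt{p}_i=1$ and $\sum 1/\mathtt{p}_i>1$ regimes, in particular confirming that $\prod_{i=1}^{m}\ell_i\in L^1[0,1]$ in each case so that pairing it against $\widehat{\Xi}_{r_0}\in L^\infty[0,1]$ is legitimate; but this is routine given $(\mathcal{J}_2)$.
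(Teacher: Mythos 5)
Your proposal is correct for this statement and is essentially the paper's own argument: the paper gives no separate proof here, presenting the theorem as a direct variant of Theorem \ref{t431}, and your reconstruction is exactly the intended one --- conditions $(a)$ and $(c)$ of Theorem \ref{t411} carry over verbatim because $(\mathcal{J}_{8})$, $(\mathcal{J}_{10})$ and the lower estimates through $\prod_{i=1}^{m}\ell_i^\star$ never involve a conjugate exponent, while condition $(b)$ is re-verified by replacing the conjugate-exponent step with
$\int_0^1\widehat{\Xi}_{r_0}\prod_{i=1}^{m}\ell_i\le\Vert\widehat{\Xi}_{r_0}\Vert_\infty\big\Vert\prod_{i=1}^{m}\ell_i\big\Vert_1\le\Vert\widehat{\Xi}_{r_0}\Vert_\infty\prod_{i=1}^{m}\Vert\ell_i\Vert_{\mathtt{p}_i}$,
i.e.\ the second part of Theorem \ref{holder} followed by its first part, after which the definition of $\Bbbk_3$ closes the estimate at $b'$.

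One correction to your closing paragraph, however: this deployment does \emph{not} work uniformly across the regimes $\sum_{i=1}^{m}1/\mathtt{p}_i=1$ and $\sum_{i=1}^{m}1/\mathtt{p}_i>1$, and the distinction is precisely why the paper follows this statement with a second theorem. The inequality $\big\Vert\prod_{i=1}^{m}\ell_i\big\Vert_1\le\prod_{i=1}^{m}\Vert\ell_i\Vert_{\mathtt{p}_i}$ is the generalized H\"older inequality and requires $\sum_{i=1}^{m}1/\mathtt{p}_i=1$; when $\sum_{i=1}^{m}1/\mathtt{p}_i>1$ the product $\prod_{i=1}^{m}\ell_i$ need not even lie in $L^1[0,1]$. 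For instance, with $m=2$, $\mathtt{p}_1=\mathtt{p}_2=1$ and $\ell_1(\mathtt{t})=\ell_2(\mathtt{t})=\mathtt{t}^{-1/2}$, each factor is in $L^1[0,1]$ and bounded below by $1$ (so $(\mathcal{J}_2)$ and $(\mathcal{J}_3)$ hold), yet $\ell_1\ell_2=\mathtt{t}^{-1}$ is not integrable. That regime is the subject of the next theorem in the paper, whose constant $\Bbbk_4$ replaces $\prod_{i=1}^{m}\Vert\ell_i\Vert_{\mathtt{p}_i}$ by $\prod_{i=1}^{m}\Vert\ell_i\Vert_{1}$. So your proof is valid for the present statement, read (as the paper's preamble indicates) as the case $\sum_{i=1}^{m}1/\mathtt{p}_i=1$, but it does not simultaneously dispatch the companion result as your last paragraph suggests. (A separate caveat, inherited from the paper rather than introduced by you: since $\mathtt{N}>2$, the exponent $2(\mathtt{N}-1)/(2-\mathtt{N})$ is negative, so $\widehat{\Xi}_{r_0}$ blows up at $\mathtt{t}=0$ and $\Vert\widehat{\Xi}_{r_0}\Vert_\infty$ is, strictly speaking, infinite; its finiteness is an implicit presupposition built into $\Bbbk_3$ in the theorem statement itself, so your proof is not required to justify it.)
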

\begin{theorem}
	Assume that $(\mathcal{J}_1)$--$(\mathcal{J}_3)$ hold and Suppose there exist real numbers $a', b'$ and $c'$ with $0<a'<b'<c'$ such that $\mathtt{g}_{\dot{\iota}} ({\dot{\iota}} =1,2,\cdot\cdot\cdot,\mathtt{n})$ satisfies $(\mathcal{J}_{8}),$ $(\mathcal{J}_{10})$ and
	\begin{itemize}
		\item [$(\mathcal{J}_{9}'')$] $\mathtt{g}_{\dot{\iota}} (\mathtt{u})<\frac{b'}{\Bbbk_4}$, for all $0\le\mathtt{u}\le\frac{b'}{\wp},$ where $\Bbbk_4=\frac{  r_0^2}{(\mathtt{N}-2)^2}\Vert\widehat{\Xi}_{r_0}\Vert_\infty\prod_{i=1}^{m}\Vert\ell_i\Vert_{1}.$
	\end{itemize}
	Then the boundary value problem \eqref{eq13}--\eqref{eq130} has at least two positive radial solutions $\{({}^1\mathtt{u}_1,{}^1\mathtt{u}_2,\cdot\cdot\cdot,{}^1\mathtt{u}_\mathtt{n})\}$ and  $\{({}^2\mathtt{u}_1,{}^2\mathtt{u}_2,\cdot\cdot\cdot,{}^2\mathtt{u}_\mathtt{n})\}$ satisfying
	$$a'<\ss_1\big({}^1\mathtt{u}_{{\dot{\iota}} }\big)~~\textit{with}~~\ss_3\big({}^1\mathtt{u}_{{\dot{\iota}} }\big)<b',~{\dot{\iota}} =1,2,\cdot\cdot\cdot,\mathtt{n},$$
	and
	$$b'<\ss_3\big({}^2\mathtt{u}_{{\dot{\iota}} }\big)~~\textit{with}~~\ss_2\big({}^2\mathtt{u}_{{\dot{\iota}} }\big)<c',~{\dot{\iota}} =1,2,\cdot\cdot\cdot,\mathtt{n}.$$
\end{theorem}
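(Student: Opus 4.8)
The plan is to reproduce the architecture of the proof of Theorem \ref{t431} essentially verbatim, altering only the single estimate that invokes H\"older's inequality. First I would recall, via Lemma \ref{l24}, that the operator $\aleph$ defined in \eqref{eqmain} is completely continuous and maps $\overline{\mathtt{E}(\ss_2,c')}$ into the cone $\mathtt{E}$, and that the functionals $\ss_1,\ss_2,\ss_3$ satisfy $\ss_2(\mathtt{u})\le\ss_3(\mathtt{u})=\ss_1(\mathtt{u})$, $\Vert\mathtt{u}\Vert\le\frac{1}{\wp}\ss_2(\mathtt{u})$, and $\ss_3(\uplambda\mathtt{u})=\uplambda\ss_3(\mathtt{u})$ for $0\le\uplambda\le1$, exactly as recorded before Theorem \ref{t431}. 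It then remains to verify hypotheses $(a)$, $(b)$, $(c)$ of the Avery--Henderson theorem (Theorem \ref{t411}).

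Conditions $(a)$ and $(c)$ are established word for word as in Theorem \ref{t431}, since both depend only on $(\mathcal{J}_{8})$, $(\mathcal{J}_{10})$ and the constant $\Bbbk_1$, none of which is altered here. For $(a)$ I would take $\mathtt{u}_1\in\partial\mathtt{E}(\ss_2,c')$, so that $c'\le\mathtt{u}_1(\mathtt{s})\le c'/\wp$ on $[0,1]$, apply $(\mathcal{J}_{8})$ together with property $(iii)$ of Lemma \ref{l22} and the lower bound $\ell_i^\star<\ell_i$ from $(\mathcal{J}_3)$, and bootstrap through the $\mathtt{n}$ nested integrals to reach $\ss_2(\aleph\mathtt{u}_1)\ge c'$. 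For $(c)$ I would note that $\mathtt{u}_1\equiv a'/4$ lies in $\mathtt{E}(\ss_1,a')$, then for $\mathtt{u}_1\in\partial\mathtt{E}(\ss_1,a')$ use $a'\le\mathtt{u}_1(\mathtt{s})\le a'/\wp$ and $(\mathcal{J}_{10})$ in the same bootstrapping fashion to conclude $\ss_1(\aleph\mathtt{u}_1)\ge a'$.

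The only genuinely new point is condition $(b)$, where the assumption $\sum_{i=1}^{m}\frac{1}{\mathtt{p}_i}>1$ forces a different H\"older estimate. Choosing $\mathtt{u}_1\in\partial\mathtt{E}(\ss_3,b')$ gives $0\le\mathtt{u}_1(\mathtt{s})\le b'/\wp$ on $[0,1]$, so $(\mathcal{J}_{9}'')$ yields $\mathtt{g}_\mathtt{n}(\mathtt{u}_1(\mathtt{t}_\mathtt{n}))<b'/\Bbbk_4$. Using property $(ii)$ of Lemma \ref{l22} on the innermost integral and substituting $\ell(\mathtt{t}_\mathtt{n})=\frac{r_0^2}{(\mathtt{N}-2)^2}\mathtt{t}_\mathtt{n}^{\frac{2(\mathtt{N}-1)}{2-\mathtt{N}}}\prod_{i=1}^{m}\ell_i(\mathtt{t}_\mathtt{n})$, the estimate reduces to controlling $\int_0^1\widehat{\Xi}_{r_0}(\mathtt{t}_\mathtt{n})\prod_{i=1}^{m}\ell_i(\mathtt{t}_\mathtt{n})\,\mathtt{d}\mathtt{t}_\mathtt{n}$. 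In place of the first (product) part of Theorem \ref{holder} used previously, I would invoke its second part with $\mathtt{f}=\ell=\prod_{i=1}^{m}\ell_i\in L^1[0,1]$ and $\mathtt{g}=\widehat{\Xi}_{r_0}\in L^\infty[0,1]$, bounding the integral by $\Vert\widehat{\Xi}_{r_0}\Vert_\infty\prod_{i=1}^{m}\Vert\ell_i\Vert_1$, which is exactly $\Bbbk_4(\mathtt{N}-2)^2/r_0^2$. This produces $\int_0^1\Xi_{r_0}(\mathtt{t}_{\mathtt{n}-1},\mathtt{t}_\mathtt{n})\ell(\mathtt{t}_\mathtt{n})\mathtt{g}_\mathtt{n}(\mathtt{u}_1(\mathtt{t}_\mathtt{n}))\,\mathtt{d}\mathtt{t}_\mathtt{n}\le b'$, and bootstrapping through the remaining layers gives $\ss_3(\aleph\mathtt{u}_1)\le b'$.

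I expect the main obstacle to be bookkeeping rather than conceptual: one must check that the $L^1$--$L^\infty$ form of H\"older's inequality delivers precisely the constant $\Bbbk_4$, and that the single-layer bound propagates cleanly through all $\mathtt{n}$ iterated integrals so that every intermediate argument remains in the range $[0,b'/\wp]$ on which $(\mathcal{J}_{9}'')$ is valid. Once $(a)$, $(b)$, $(c)$ are in hand, the two fixed points ${}^1\mathtt{u}_1,{}^2\mathtt{u}_1$, and hence the two radial-solution $\mathtt{n}$-tuples with the stated ordering in $\ss_1,\ss_2,\ss_3$, follow at once from Theorem \ref{t411}, just as in Theorem \ref{t431}.
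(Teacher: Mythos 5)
Your overall plan is exactly what the paper intends: the paper itself offers \emph{no} proof of this theorem (it is stated, together with its companion for $\sum_{i=1}^{m}1/\mathtt{p}_i=1$, as a direct variant of Theorem \ref{t431}), and your reduction to re-verifying conditions $(a)$, $(b)$, $(c)$ of Theorem \ref{t411} --- with $(a)$ and $(c)$ taken over verbatim because they involve only $(\mathcal{J}_{8})$, $(\mathcal{J}_{10})$ and the unchanged constant $\Bbbk_1$ --- is the correct architecture.

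The problem lies in the one step you single out as new. The second part of Theorem \ref{holder}, applied with $\mathtt{f}=\prod_{i=1}^{m}\ell_i$ and $\mathtt{g}=\widehat{\Xi}_{r_0}$, gives
$$
\int_{0}^{1}\widehat{\Xi}_{r_0}(\mathtt{t}_\mathtt{n})\prod_{i=1}^{m}\ell_i(\mathtt{t}_\mathtt{n})\,\mathtt{d}\mathtt{t}_\mathtt{n}\le\Vert\widehat{\Xi}_{r_0}\Vert_\infty\left\Vert\prod_{i=1}^{m}\ell_i\right\Vert_{1},
$$
whereas what you need in order to recover $\Bbbk_4=\frac{r_0^2}{(\mathtt{N}-2)^2}\Vert\widehat{\Xi}_{r_0}\Vert_\infty\prod_{i=1}^{m}\Vert\ell_i\Vert_{1}$ is the further inequality $\left\Vert\prod_{i=1}^{m}\ell_i\right\Vert_1\le\prod_{i=1}^{m}\Vert\ell_i\Vert_1$. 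That inequality is not part of H\"older's theorem and is false in general: the $L^1$ norm is not submultiplicative under pointwise products. Concretely, $\ell_1(\mathtt{t})=\ell_2(\mathtt{t})=\mathtt{t}^{-1/3}+1$ satisfies $(\mathcal{J}_2)$ with $\mathtt{p}_1=\mathtt{p}_2=1$ (so $\sum_{i}1/\mathtt{p}_i=2>1$) and $(\mathcal{J}_3)$, and $\ell_1\ell_2\in L^1[0,1]$, yet $\Vert\ell_1\ell_2\Vert_1=7>\frac{25}{4}=\Vert\ell_1\Vert_1\Vert\ell_2\Vert_1$. Since the inequality you need goes in exactly the direction that fails, hypothesis $(\mathcal{J}_9'')$ with the stated $\Bbbk_4$ does not force the innermost integral below $b'$, and the bootstrapping for condition $(b)$ cannot start. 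This defect is inherited from the paper's own definition of $\Bbbk_4$ (and of $\mathfrak{M}_2$ in Theorem \ref{t45}, equally unproved there); it disappears if $m=1$, or if $\Bbbk_4$ is redefined with $\left\Vert\prod_{i=1}^{m}\ell_i\right\Vert_1$ (finite by the standing integrability assumption on $\ell$) in place of $\prod_{i=1}^{m}\Vert\ell_i\Vert_1$. With that repair, your single-layer estimate and its propagation through the $\mathtt{n}$ nested integrals (each intermediate value stays in $[0,b']\subset[0,b'/\wp]$, where $(\mathcal{J}_9'')$ applies) are correct, and Theorem \ref{t411} then delivers the two fixed points exactly as you describe.
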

\begin{example}
	Consider the following nonlinear elliptic system of equations,
	\begin{equation}\label{e6}
		\triangle{  \mathtt{u}_{{\dot{\iota}} }}+\frac{(\mathtt{N}-2)^2r_0^{2\mathtt{N}-2}}{\vert x\vert^{2\mathtt{N}-2}}\mathtt{u}_{\dot{\iota}} +\ell(\vert x\vert)\mathtt{g}_{{\dot{\iota}} }(\mathtt{u}_{{\dot{\iota}} +1})=0,~1<\vert x\vert<3,
	\end{equation}
	\begin{equation}\label{e7}\left.
		\begin{aligned}
			&\hskip0.9cm \mathtt{u}_{{\dot{\iota}} }=0~~\text{on}~~\vert x\vert=1~\text{and}~\vert x\vert=3,\\
			&\mathtt{u}_{{\dot{\iota}} }=0~~\text{on}~~\vert x\vert=1~\text{and}~\frac{\partial \mathtt{u}_{\dot{\iota}} }{\partial r}=0~\text{on}~\vert x\vert=3,\\
			&\frac{\partial \mathtt{u}_{\dot{\iota}} }{\partial r}=0~~\text{on}~~\vert x\vert=1~\text{and}~\mathtt{u}_{{\dot{\iota}} }=0~\text{on}~\vert x\vert=3,
		\end{aligned}\right\}
	\end{equation}
\end{example}
where $ r_0=1,$ $\mathtt{N}=3,$ ${\dot{\iota}} \in\{1,2\},\, \mathtt{u}_3= \mathtt{u}_1,$  $\ell(\mathtt{s})=\frac{1}{\mathtt{s}^4}\prod_{i=1}^{2}\ell_i(\mathtt{s}),$ $\ell_i(\mathtt{s})=\ell_i\left(\frac{1}{\mathtt{s}}\right),$
in which
$$\ell_1(t)=\frac{1}{t+1}~~~~\text{~~and~~}~~~~\ell_2(t)=\frac{1}{t^2+1},$$
then it is clear that
$$\ell_1, \ell_2\in L^\mathtt{p}[0,1]~~\text{~~and~~}\prod_{i=1}^{2}\ell_i^*=1.$$
Let $\upalpha=\upbeta=\gamma=\updelta=1,$ then  $\varrho=2\cosh(1)+2\sinh(1)\approx5.436563658,$ 
$$
\begin{aligned}
	\Xi_{r_0}(\mathtt{s},\mathtt{t})=\frac{1}{2\cosh(1)+2\sinh(1)}\left\{\begin{array}{ll}\big(\sinh(\mathtt{s})+\cosh(\mathtt{s})\big)\big(\sinh(1-\mathtt{t})+\cosh(1-\mathtt{t})\big), \hskip0.6cm 0  \le \mathtt{s}\le \mathtt{t}\le 1,\vspace{1.2mm}\\	
		\big(\sinh(\mathtt{t})+\cosh(\mathtt{t})\big)\big(\sinh(1-\mathtt{s})+\cosh(1-\mathtt{s})\big),
		\hskip0.6cm  0  \le \mathtt{t}\le \mathtt{s}\le 1,
	\end{array}
	\right.
\end{aligned}
$$
and $\wp=\frac{1}{\sinh(1)+\cosh(1)}.$ 
Also, 
$$
\begin{aligned}
\Bbbk_1=\frac{\wp r_0^2}{(\mathtt{N}-2)^2}\prod_{i=1}^{m}\ell_i^\star\int_{0}^{1}\Xi_{r_0}(\mathtt{t}, \mathtt{t})\mathtt{t}^\frac{2(\mathtt{N}-1)}{2-\mathtt{N}} \mathtt{d}\mathtt{t}\approx0.1630970729\times10^{-4}.
\end{aligned}
$$
Let $\mathtt{p}_1=3,\mathtt{p}_2=6$ and $\mathtt{q}=2,$ then $\frac{1}{\mathtt{p}_1}+\frac{1}{\mathtt{p}_2}+\frac{1}{\mathtt{q}}=1$ and
$$
\begin{aligned}
\Bbbk_2=\frac{  r_0^2}{(\mathtt{N}-2)^2}\Vert\widehat{\Xi}_{r_0}\Vert_\mathtt{q}\prod_{i=1}^{m}\Vert\ell_i\Vert_{\mathtt{p}_i}\approx4.388193758\times10^{-8}.
\end{aligned}
$$
Let $$
\begin{aligned}
	\mathtt{g}_1(\mathtt{u})=\mathtt{g}_2(\mathtt{u})=\left\{\begin{array}{ll}10^{16}, \hskip1.9cm  \mathtt{u}\ge1,\vspace{1.2mm}\\	
		10^{16}\mathtt{u}^2-\mathtt{u}+1, \hskip0.4cm  \mathtt{u}<1.
	\end{array}
	\right.
\end{aligned}
$$
Choose $a'=10^4,\,b'=10^9$ and $c'=10^{10}.$ Then, 
$$
\begin{aligned}
\mathtt{g}_1(\mathtt{u})=\mathtt{g}_2(\mathtt{u})&\ge6.131317885\times10^{14}=\frac{c'}{\Bbbk_1},~~\mathtt{u}\in[10^{10},2.72\times10^{10}],\\
\mathtt{g}_1(\mathtt{u})=\mathtt{g}_2(\mathtt{u})&\le2.278841945\times 10^{16}=\frac{b'}{\Bbbk_2},~~\mathtt{u}\in[0,2.72\times10^9],\\
\mathtt{g}_1(\mathtt{u})=\mathtt{g}_2(\mathtt{u})&\ge6.131317885\times10^8=\frac{a'}{\Bbbk_1},~~\mathtt{u}\in[10^4,2.72\times10^4].
\end{aligned}
$$
Therefore, by Theorem \ref{t43}, the boundary value problem \eqref{e6}--\eqref{e7} has at least two positive radial solutions $({}^{\dot{\iota}} \mathtt{u}_1,{}^{\dot{\iota}} \mathtt{u}_2),\,{\dot{\iota}} =1,2$ such that 
$$10^4<\max_{\mathtt{s}\in[0,1]}{}^{\dot{\iota}} \mathtt{u}_1(\mathtt{s})~~\text{with}~~\max_{\mathtt{s}\in[0,1]}{}^{\dot{\iota}} \mathtt{u}_1(\mathtt{s})<10^9,~~\text{for}~~{\dot{\iota}} =1,2,$$
$$10^9<\max_{\mathtt{s}\in[0,1]}{}^{\dot{\iota}} \mathtt{u}_2(\mathtt{s})~~\text{with}~~\min_{\mathtt{s}\in[0,1]}{}^{\dot{\iota}} \mathtt{u}_2(\mathtt{s})<10^{10},~~\text{for}~~{\dot{\iota}} =1,2.$$
\section{Existence of at Least Three Positive Radial Solutions}
In this section, we establish the existence of at least three positive radial solutions for the system \eqref{eq13}--\eqref{eq130} by an application of following Legget-William fixed point theorem. Let $a',b'$ be two real numbers such that $0<a'<b'$ and $\ss$ a nonnegative, continuous, concave functional on $\mathtt{E}.$ We define the following convex sets,
$$\mathtt{E}_{a'}=\{\mathtt{u}\in\mathtt{E}:\Vert \mathtt{u}\Vert<a'\},$$
$$\mathtt{E}(\ss,a',b')=\{\mathtt{u}\in\mathtt{E}:a'\le\ss(\mathtt{u}),\,\Vert \mathtt{u}\Vert<b'\}.$$
\begin{theorem}(Legget-William\cite{legget})\label{t412}
	Let $\mathtt{E}$ be a cone in a Banach space $\mathtt{B}.$ Let $\ss$ a nonnegative, continuous, concave functional on $\mathtt{E}$ satisfying for some $c'>0$ such that $\ss(\mathtt{u})\le\Vert\mathtt{u}\Vert$ for all $\mathtt{u}\in \overline{\mathtt{E}}_{c'}.$ Suppose there exists a completely continuous operator $\aleph :\overline{\mathtt{E}}_{c'}\to\overline{\mathtt{E}}_{c'}$ and $0<a'<b'<d'\le c'$ such that
	\begin{itemize}
		\item [$(a)$] $\{\mathtt{u}\in\mathtt{E}(\ss,b',d'):\ss(\mathtt{u})>a'\}\ne\emptyset$ and $\ss(\aleph \mathtt{u})>b'$ for $\mathtt{u}\in\mathtt{E}(\ss,b',d'),$
		\item [$(b)$] $\Vert\aleph \mathtt{u}\Vert<a'$ for $\Vert\mathtt{u}\Vert<a',$
		\item [$(c)$] $\ss(\aleph \mathtt{u})>b'$ for $\mathtt{u}\in\mathtt{E}(\ss,a',c'),$ with $\Vert\aleph \mathtt{u}\Vert>d'$
	\end{itemize}	
	Then, $\aleph $ has at least three fixed points ${}^1\mathtt{u},{}^2\mathtt{u},{}^3\mathtt{u}\in\mathtt{E}_{c'}$ satisfying
	$\Vert{}^1\mathtt{u}\Vert<a',$ $b'<\ss({}^2\mathtt{u})$ and $\Vert{}^3\mathtt{u}\Vert>a'$ and $\ss({}^3\mathtt{u})<b'.$ 	
\end{theorem}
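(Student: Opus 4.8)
The statement is the classical Leggett--Williams three--solution theorem, so the natural line of attack is \emph{fixed point index theory on a cone}. The plan is to work with the fixed point index $i(\aleph,\Omega,\mathtt{E})$ of the completely continuous self-map $\aleph:\overline{\mathtt{E}}_{c'}\to\overline{\mathtt{E}}_{c'}$ on relatively open bounded subsets $\Omega$ of the cone, and to exploit its four standard properties: normalization (the index of a constant map with value in $\Omega$ is $1$), additivity and excision, homotopy invariance, and solvability (a nonzero index forces a fixed point in $\Omega$). The whole argument then reduces to computing the index on three nested regions and subtracting.

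First I would dispose of the two index computations that involve only the norm. Since $\aleph$ maps $\overline{\mathtt{E}}_{c'}$ into itself and $\ss(\mathtt{u})\le\Vert\mathtt{u}\Vert$, the straight-line homotopy $H(t,\mathtt{u})=(1-t)\aleph\mathtt{u}$ has no fixed point on $\partial\mathtt{E}_{c'}$, so $i(\aleph,\mathtt{E}_{c'},\mathtt{E})=1$. Likewise, hypothesis $(b)$ gives $\Vert\aleph\mathtt{u}\Vert<a'$ on $\overline{\mathtt{E}}_{a'}$, whence the same homotopy yields $i(\aleph,\mathtt{E}_{a'},\mathtt{E})=1$.

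The heart of the proof, and the step I expect to be the main obstacle, is the index on the set cut out by the \emph{concave} functional, $U:=\{\mathtt{u}\in\mathtt{E}_{c'}:\ss(\mathtt{u})>b'\}$, for which I would show $i(\aleph,U,\mathtt{E})=1$. Here the norm alone is useless; instead I would fix, using the nonemptiness in $(a)$, a point $\mathtt{u}_0$ with $\ss(\mathtt{u}_0)>b'$ and $\Vert\mathtt{u}_0\Vert\le d'$, and run the homotopy $H(t,\mathtt{u})=(1-t)\aleph\mathtt{u}+t\mathtt{u}_0$. The delicate point is to verify that $H(t,\mathtt{u})\ne\mathtt{u}$ for every $\mathtt{u}\in\partial U$ and $t\in[0,1]$: applying $\ss$ to a putative fixed point and using concavity gives $\ss(\mathtt{u})\ge(1-t)\ss(\aleph\mathtt{u})+t\ss(\mathtt{u}_0)$, and then the lower bounds $\ss(\aleph\mathtt{u})>b'$ furnished by $(a)$ and $(c)$ (the latter handling exactly the boundary piece where $\Vert\aleph\mathtt{u}\Vert>d'$) contradict $\ss(\mathtt{u})=b'$ on the relevant part of $\partial U$. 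Granting this, homotopy invariance reduces $i(\aleph,U,\mathtt{E})$ to the index of the constant map $\mathtt{u}_0$, which is $1$.

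With the three indices in hand, I would finish by excision. On $\overline{\mathtt{E}}_{a'}$ one has $\ss(\mathtt{u})\le\Vert\mathtt{u}\Vert\le a'<b'$, so $\overline{\mathtt{E}}_{a'}$ and $\overline{U}$ are disjoint subsets of $\mathtt{E}_{c'}$, and
$$i\big(\aleph,\mathtt{E}_{c'}\setminus(\overline{\mathtt{E}}_{a'}\cup\overline{U}),\mathtt{E}\big)=i(\aleph,\mathtt{E}_{c'},\mathtt{E})-i(\aleph,\mathtt{E}_{a'},\mathtt{E})-i(\aleph,U,\mathtt{E})=1-1-1=-1.$$
Each of the three nonzero indices produces a fixed point of $\aleph$ by the solvability property: one in $\mathtt{E}_{a'}$ with $\Vert{}^1\mathtt{u}\Vert<a'$, one in $U$ with $\ss({}^2\mathtt{u})>b'$, and one in the complementary region with $\Vert{}^3\mathtt{u}\Vert>a'$ and $\ss({}^3\mathtt{u})<b'$, which is precisely the asserted conclusion.
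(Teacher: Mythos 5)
The paper offers no proof of this statement at all: Theorem \ref{t412} is quoted from \cite{legget} as a ready-made tool. So your attempt can only be measured against the classical Leggett--Williams argument, and your outline is indeed that argument in spirit: compute the fixed point index of $\aleph$ over three disjoint regions, obtain $1$, $1$, $1$, and extract a third fixed point from the index $-1$ of the complementary region. (You have also, sensibly, read the hypotheses in their standard form; the paper's transcription contains typos --- in $(a)$ the nonempty set should be $\{\mathtt{u}\in\mathtt{E}(\ss,b',d'):\ss(\mathtt{u})>b'\}$, in $(b)$ the hypothesis should hold for $\Vert\mathtt{u}\Vert\le a'$, and in $(c)$ the set should be $\mathtt{E}(\ss,b',c')$.)

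There is, however, a genuine gap in your execution: you compute every index relative to the cone $\mathtt{E}$, and the hypotheses do not exclude fixed points of $\aleph$ on the boundaries that must be fixed-point free for those indices to exist. For $i(\aleph,\mathtt{E}_{c'},\mathtt{E})$, your homotopy $(1-t)\aleph\mathtt{u}$ rules out fixed points only for $t>0$; at $t=0$ a fixed point of $\aleph$ with $\Vert\mathtt{u}\Vert=c'$ is entirely compatible with $\aleph:\overline{\mathtt{E}}_{c'}\to\overline{\mathtt{E}}_{c'}$, in which case the index you invoke is simply undefined. The same defect hits $U=\{\mathtt{u}\in\mathtt{E}_{c'}:\ss(\mathtt{u})>b'\}$: its boundary relative to $\mathtt{E}$ contains points with $\Vert\mathtt{u}\Vert=c'$ and $\ss(\mathtt{u})>b'$, and on these neither the norm estimate nor concavity shows that $(1-t)\aleph\mathtt{u}+t\mathtt{u}_0$ is fixed-point free. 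The classical repair --- and what Leggett and Williams actually do --- is to take the index relative to the retract $X=\overline{\mathtt{E}}_{c'}$ rather than the cone: $X$ has empty boundary relative to itself, so $i(\aleph,X,X)=1$ by normalization with no boundary hypothesis at all, and $D_2=\{\mathtt{u}\in X:\ss(\mathtt{u})>b'\}$ has relative boundary exactly $\{\mathtt{u}\in X:\ss(\mathtt{u})=b'\}$, which is precisely the piece your concavity argument controls. On that piece you should also make explicit the step you left implicit: when $\Vert\aleph\mathtt{u}\Vert\le d'$, the fixed point equation $\mathtt{u}=(1-t)\aleph\mathtt{u}+t\mathtt{u}_0$ forces $\Vert\mathtt{u}\Vert\le(1-t)d'+td'=d'$, so $\mathtt{u}\in\mathtt{E}(\ss,b',d')$ and hypothesis $(a)$ applies, while the case $\Vert\aleph\mathtt{u}\Vert>d'$ is covered by $(c)$. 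With these two repairs (retract-relative index and the explicit case split), your additivity computation $1-1-1=-1$ and the extraction of three distinct fixed points go through.
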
	
\begin{theorem}\label{t42}
Assume that	$(\mathcal{J}_1)$--$(\mathcal{J}_3)$ hold. Let $0<a'<b'<c'$ and suppose that $\mathtt{g}_{\dot{\iota}} ,\,{\dot{\iota}} =1,2,\cdot\cdot\cdot,\mathtt{n}$ satisfies the following conditions,
\begin{itemize}
	\item [$(\mathcal{J}_{11})$] $\mathtt{g}_{\dot{\iota}} (\mathtt{u})<\frac{a'}{\mathfrak{O}_1}$ for $0\le\mathtt{u}\le a',$ where $\mathfrak{O}_1=\frac{ r_0^2}{(\mathtt{N}-2)^2}\Vert\widehat{\Xi}_{r_0}\Vert_\mathtt{q}\prod_{i=1}^{m}\Vert\ell_i\Vert_{\mathtt{p}_i}.$
	\item [$(\mathcal{J}_{12})$] $\mathtt{g}_{\dot{\iota}} (\mathtt{u})>\frac{b'}{\mathfrak{O}_2}$ for $b'\le\mathtt{u}\le c',$ where $\mathfrak{O}_2=\frac{\wp r_0^2}{(\mathtt{N}-2)^2}\prod_{i=1}^{m}\ell_i^\star\int_{0}^{1}\Xi_{r_0}(\mathtt{t}, \mathtt{t})\mathtt{t}^\frac{2(\mathtt{N}-1)}{2-\mathtt{N}} \mathtt{d}\mathtt{t}.$
	\item [$(\mathcal{J}_{13})$] $\mathtt{g}_{\dot{\iota}} (\mathtt{u})<\frac{c'}{\mathfrak{O}_1}$ for $0\le\mathtt{u}\le c'.$	
\end{itemize}
Then the iterative system \eqref{eq13}--\eqref{eq130} has at least three positive solutions $({}^1\mathtt{u}_1,{}^1\mathtt{u}_2,\cdot\cdot\cdot,{}^1\mathtt{u}_\mathtt{n}),$ $({}^2\mathtt{u}_1,{}^2\mathtt{u}_2,\cdot\cdot\cdot,{}^2\mathtt{u}_\mathtt{n})$ and $({}^3\mathtt{u}_1,{}^3\mathtt{u}_2,\cdot\cdot\cdot,{}^3\mathtt{u}_\mathtt{n})$ with $\Vert{}^{\dot{\iota}} \mathtt{u}_1\Vert<a',$ $b'<\ss({}^{\dot{\iota}} \mathtt{u}_2),$ $\Vert{}^{\dot{\iota}} \mathtt{u}_3\Vert>a'$ and $\ss({}^{\dot{\iota}} \mathtt{u}_3)<b'$ for ${\dot{\iota}} =1,2,\cdot\cdot\cdot,\mathtt{n}.$
\end{theorem}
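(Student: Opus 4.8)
The plan is to apply the Leggett--Williams theorem (Theorem \ref{t412}) to the operator $\aleph$ defined in \eqref{eqmain}, with $\mathtt{E}$ the cone already in use and the concave functional taken to be
$$\ss(\mathtt{u})=\min_{\mathtt{s}\in[0,1]}\mathtt{u}(\mathtt{s}).$$
First I would record the routine structural facts: $\ss$ is nonnegative, continuous, and concave (it is a pointwise minimum of the linear evaluation functionals $\mathtt{u}\mapsto\mathtt{u}(\mathtt{s})$), and $\ss(\mathtt{u})\le\Vert\mathtt{u}\Vert$ for all $\mathtt{u}\in\overline{\mathtt{E}}_{c'}$, while the defining property of $\mathtt{E}$ gives the reverse cone estimate $\ss(\mathtt{u})=\min_{\mathtt{s}}\mathtt{u}(\mathtt{s})\ge\wp\Vert\mathtt{u}\Vert$, which will drive every lower bound below. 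Next I would verify that $\aleph:\overline{\mathtt{E}}_{c'}\to\overline{\mathtt{E}}_{c'}$: for $\mathtt{u}_1\in\overline{\mathtt{E}}_{c'}$ one has $0\le\mathtt{u}_1(\mathtt{s})\le c'$, so $(\mathcal{J}_{13})$ bounds the innermost integrand by $c'/\mathfrak{O}_1$, and the nested H\"older/bootstrapping estimate of Theorem \ref{t43}, propagated layer by layer through all $\mathtt{n}$ integrals against $\Vert\widehat{\Xi}_{r_0}\Vert_{\mathtt{q}}\prod_{i=1}^{m}\Vert\ell_i\Vert_{\mathtt{p}_i}$, yields $\Vert\aleph\mathtt{u}_1\Vert\le c'$; together with Lemma \ref{l24} this gives the desired self-map.

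With the preliminaries in place I would check the three hypotheses of Theorem \ref{t412} in turn. Condition $(b)$ is the exact analogue of the self-map bound, now run with $(\mathcal{J}_{11})$: for $\Vert\mathtt{u}_1\Vert<a'$ we have $0\le\mathtt{u}_1(\mathtt{s})\le a'$, so the same upper bootstrapping produces $\Vert\aleph\mathtt{u}_1\Vert<a'$. For condition $(a)$ I would set $d'=b'/\wp$ and first exhibit the constant function $\mathtt{u}\equiv\tfrac12(b'+d')$, which satisfies $b'<\ss(\mathtt{u})=\Vert\mathtt{u}\Vert<d'$, as a witness that the set in hypothesis $(a)$ is nonempty; then for arbitrary $\mathtt{u}_1\in\mathtt{E}(\ss,b',d')$ the bounds $b'\le\min_{\mathtt{s}}\mathtt{u}_1(\mathtt{s})\le\mathtt{u}_1(\mathtt{s})\le\Vert\mathtt{u}_1\Vert<d'\le c'$ put us in the range of $(\mathcal{J}_{12})$, and the lower-bound bootstrapping from the proof of Theorem \ref{t431}, using $\wp$, $\ell_i^\star$ and $\int_0^1\Xi_{r_0}(\mathtt{t},\mathtt{t})\mathtt{t}^{\frac{2(\mathtt{N}-1)}{2-\mathtt{N}}}\mathtt{d}\mathtt{t}$ through $\mathfrak{O}_2$, gives $\ss(\aleph\mathtt{u}_1)=\min_{\mathtt{s}}(\aleph\mathtt{u}_1)(\mathtt{s})>b'$.

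Condition $(c)$ I expect to follow from the cone alone rather than from a fresh integral estimate: for $\mathtt{u}_1\in\mathtt{E}(\ss,b',c')$ with $\Vert\aleph\mathtt{u}_1\Vert>d'$, the membership $\aleph\mathtt{u}_1\in\mathtt{E}$ gives
$$\ss(\aleph\mathtt{u}_1)=\min_{\mathtt{s}}(\aleph\mathtt{u}_1)(\mathtt{s})\ge\wp\Vert\aleph\mathtt{u}_1\Vert>\wp d'=b',$$
which is exactly what is required. Having verified $(a)$--$(c)$, Theorem \ref{t412} yields three fixed points ${}^1\mathtt{u}_1,{}^2\mathtt{u}_1,{}^3\mathtt{u}_1\in\overline{\mathtt{E}}_{c'}$ of $\aleph$ with $\Vert{}^1\mathtt{u}_1\Vert<a'$, $b'<\ss({}^2\mathtt{u}_1)$, and $\Vert{}^3\mathtt{u}_1\Vert>a'$ with $\ss({}^3\mathtt{u}_1)<b'$; finally, setting $\mathtt{u}_{\mathtt{n}+1}=\mathtt{u}_1$ and reconstructing $\mathtt{u}_{\dot{\iota}}$ iteratively exactly as at the close of Theorem \ref{t43} turns each of these into an $\mathtt{n}$-tuple of positive radial solutions with the stated separation.

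The bootstrapping engine itself is entirely routine, being identical to that of Theorems \ref{t43} and \ref{t431} with only the constant on the right-hand side changed, so I expect no difficulty there. The one point demanding genuine care is the bookkeeping of $d'$: condition $(c)$ forces the choice $d'=b'/\wp$, and for the chain $0<a'<b'<d'\le c'$ to be consistent one needs $\wp<1$ (which holds since $\wp\le 1/\cosh(r_0)<1$) together with the tacit size requirement $b'\le\wp c'$. This is the only place where the cone constant $\wp$, rather than the analytic hypotheses $(\mathcal{J}_{11})$--$(\mathcal{J}_{13})$, carries the argument, and I would make sure the three localizing regions are correctly nested so that the three solutions are genuinely distinct.
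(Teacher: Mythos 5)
Your overall strategy coincides with the paper's: both proofs apply the Leggett--Williams theorem (Theorem \ref{t412}) to the operator $\aleph$ of \eqref{eqmain} on the cone $\mathtt{E}$, with the concave functional $\ss(\mathtt{u})=\min_{\mathtt{s}\in[0,1]}\mathtt{u}(\mathtt{s})$; both obtain the self-map $\aleph:\overline{\mathtt{E}}_{c'}\to\overline{\mathtt{E}}_{c'}$ from $(\mathcal{J}_{13})$ via the H\"older bootstrapping of Theorem \ref{t43}, condition $(b)$ from $(\mathcal{J}_{11})$, and the lower estimate in condition $(a)$ from $(\mathcal{J}_{12})$. The divergence, and the gap, lies in your handling of $d'$ and condition $(c)$.

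You set $d'=b'/\wp$, verify $(a)$ on $\mathtt{E}(\ss,b',d')$, and deduce $(c)$ from the cone inequality $\ss(\aleph\mathtt{u}_1)\ge\wp\Vert\aleph\mathtt{u}_1\Vert>\wp d'=b'$. This forces the chain $b'<d'\le c'$, i.e.\ $b'\le\wp c'$, which you yourself flag as a ``tacit size requirement.'' But the theorem contains no such hypothesis: it asserts the conclusion for \emph{every} triple $0<a'<b'<c'$ satisfying $(\mathcal{J}_{11})$--$(\mathcal{J}_{13})$, and since $\wp\le 1/\cosh(r_0)<1$ can be small, triples with $c'<b'/\wp$ are perfectly compatible with all stated assumptions; for such triples your choice of $d'$ violates the requirement $d'\le c'$ of Theorem \ref{t412}, the theorem cannot be invoked, and your argument stops. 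The gap is avoidable, and the paper avoids it by taking $d'=c'$: because $(\mathcal{J}_{12})$ is assumed on the whole interval $b'\le\mathtt{u}\le c'$, the bootstrapping you already carried out for $(a)$ applies to all of $\mathtt{E}(\ss,b',c')$, giving $\ss(\aleph\mathtt{u}_1)>b'$ there; condition $(c)$, which with $d'=c'$ only concerns those $\mathtt{u}_1\in\mathtt{E}(\ss,b',c')$ with $\Vert\aleph\mathtt{u}_1\Vert>c'$, is then an immediate consequence of $(a)$ --- no cone-inequality argument and no relation among $b'$, $c'$ and $\wp$ are needed. With that single change (keeping your final step of reconstructing the $\mathtt{n}$-tuples iteratively, as at the end of Theorem \ref{t43}), your argument becomes a complete proof and in fact reproduces the paper's.
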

\begin{proof}
From Lemma \ref{l24}, $\aleph :\mathtt{E}\to\mathtt{E}$ is a completely continuous operator. If $\mathtt{u}_1\in\overline{\mathtt{E}}_{c'},$ then $\Vert\mathtt{u}_1\Vert\le c'$ and for $0<\mathtt{t}_{\mathtt{n}-1}<1$ and by $(\mathcal{J}_{13}),$ we have 
$$
\begin{aligned}
	\int_{0}^{1}\Xi_{r_0}(\mathtt{t}_{\mathtt{n}-1}, \mathtt{t}_\mathtt{n})\ell(\mathtt{t}_\mathtt{n})\mathtt{g}_\mathtt{n}\big( \mathtt{u}_1(\mathtt{t}_\mathtt{n})\big)\mathtt{d}\mathtt{t}_\mathtt{n}
	&\le\int_{0}^{1}\Xi_{r_0}(\mathtt{t}_{\mathtt{n}}, \mathtt{t}_\mathtt{n})\ell(\mathtt{t}_\mathtt{n})\mathtt{g}_\mathtt{n}\big( \mathtt{u}_1(\mathtt{t}_\mathtt{n})\big)\mathtt{d}\mathtt{t}_\mathtt{n}\\
	&\le \frac{ c'}{\mathfrak{O}_1}\int_{0}^{1}\Xi_{r_0}(\mathtt{t}_{\mathtt{n}}, \mathtt{t}_\mathtt{n})\ell(\mathtt{t}_\mathtt{n})\mathtt{d}\mathtt{t}_\mathtt{n}\\
	&\le \frac{ c' r_0^2}{(\mathtt{N}-2)^2\mathfrak{O}_1}\int_{0}^{1}\Xi_{r_0}(\mathtt{t}_{\mathtt{n}}, \mathtt{t}_\mathtt{n})\mathtt{t}^\frac{2(\mathtt{N}-1)}{2-\mathtt{N}}_\mathtt{n}\prod_{i=1}^{m}\ell_i(\mathtt{t}_\mathtt{n})\mathtt{d}\mathtt{t}_\mathtt{n}.
\end{aligned}
$$
There exists a $\mathtt{q}>1$ such that $\displaystyle\sum_{i=1}^{m}\frac{1}{\mathtt{p}_i}+\frac{1}{\mathtt{q}}=1.$ By the first part of Theorem \ref{holder}, we have
$$
\begin{aligned}
	\int_{0}^{1}\Xi_{r_0}(\mathtt{t}_{\mathtt{n}-1}, \mathtt{t}_\mathtt{n})\ell(\mathtt{t}_\mathtt{n})\mathtt{g}_\mathtt{n}\big( \mathtt{u}_1(\mathtt{t}_\mathtt{n})\big)\mathtt{d}\mathtt{t}_\mathtt{n}&\le \frac{ c' r_0^2}{(\mathtt{N}-2)^2\mathfrak{O}_1}\Vert\widehat{\Xi}_{r_0}\Vert_\mathtt{q}\prod_{i=1}^{m}\Vert\ell_i\Vert_{\mathtt{p}_i}\\
	&\le c'.
\end{aligned}
$$
Continuing with this bootstrapping argument, we get
$$
\begin{aligned}
	\left\Vert\aleph  \mathtt{u}_1\right\Vert=&\max_{\mathtt{s}\in[0,1]} \int_{0}^{1}\Xi_{r_0}(\mathtt{s}, \mathtt{t}_1)\ell(\mathtt{t}_1)\mathtt{g}_1\Bigg[ \int_{0}^{1}\Xi_{r_0}(\mathtt{t}_1, \mathtt{t}_2)\ell(\mathtt{t}_2)\mathtt{g}_2\Bigg[ \int_{0}^{1}\Xi_{r_0}(\mathtt{t}_2, \mathtt{t}_3)\ell(\mathtt{t}_3)\mathtt{g}_4 \cdots\\
	&\hskip1cm\mathtt{g}_{\mathtt{n}-1}\Bigg[ \int_{0}^{1}\Xi_{r_0}(\mathtt{t}_{\mathtt{n}-1}, \mathtt{t}_\mathtt{n})\ell(\mathtt{t}_\mathtt{n})\mathtt{g}_\mathtt{n}\big( \mathtt{u}_1(\mathtt{t}_\mathtt{n})\big)\mathtt{d}\mathtt{t}_\mathtt{n}\Bigg]\cdot\cdot\cdot \Bigg]\mathtt{d}\mathtt{t}_3\Bigg]\mathtt{d}\mathtt{t}_2\Bigg]\mathtt{d}\mathtt{t}_1\\
	\le&\,c'.
\end{aligned}
$$
Hence, $\aleph :\overline{\mathtt{E}}_{c'}\to\overline{\mathtt{E}}_{c'}.$ In the same way, if $\mathtt{u}_1\in\overline{\mathtt{E}}_{a'},$ then $\aleph :\overline{\mathtt{E}}_{a'}\to\overline{\mathtt{E}}_{a'}.$ Therefore, condition $(b)$ of Theorem \ref{t412} satisfied. To check condition $(a)$ of Theorem \ref{t412}, choose $\mathtt{u}_1(\mathtt{s})=(b'+c')/2,\,\mathtt{s}\in[0,1].$ It is easy to see that $\mathtt{u}_1\in\mathtt{E}(\ss,b',c')$ and $\ss(\mathtt{u}_1)=\ss((b'+c')/2)>b'.$ So, $\{\mathtt{u}_1\in\mathtt{E}(\ss,b',c'):\ss(\mathtt{u}_1)>b'\}\ne\emptyset.$ Hence, if $\mathtt{u}_1\in\mathtt{E}(\ss,b',c')$ then $b'<\mathtt{u}_1(\mathtt{s})<c',\,\mathtt{s}\in[0,1].$ Let $0<\mathtt{t}_{\mathtt{n}-1}<1.$ Then by $(\mathcal{J}_{12}),$ we have
$$
\begin{aligned}
	\int_{0}^{1}\Xi_{r_0}(\mathtt{t}_{\mathtt{n}-1}, \mathtt{t}_\mathtt{n})\ell(\mathtt{t}_\mathtt{n})\mathtt{g}_\mathtt{n}\big( \mathtt{u}_1(\mathtt{t}_\mathtt{n})\big)\mathtt{d}\mathtt{t}_\mathtt{n}
	&\ge \wp\int_{0}^{1}\Xi_{r_0}(\mathtt{t}_{\mathtt{n}}, \mathtt{t}_\mathtt{n})\ell(\mathtt{t}_\mathtt{n})\mathtt{g}_\mathtt{n}\big( \mathtt{u}_1(\mathtt{t}_\mathtt{n})\big)\mathtt{d}\mathtt{t}_\mathtt{n}\\
	&\ge \frac{\wp b'}{\mathfrak{O}_2}\int_{0}^{1}\Xi_{r_0}(\mathtt{t}_{\mathtt{n}}, \mathtt{t}_\mathtt{n})\ell(\mathtt{t}_\mathtt{n})\mathtt{d}\mathtt{t}_\mathtt{n}\\
	&\ge \frac{\wp b'r_0^2}{(\mathtt{N}-2)^2\mathfrak{O}_2}\int_{0}^{1}\Xi_{r_0}(\mathtt{t}_{\mathtt{n}}, \mathtt{t}_\mathtt{n})\mathtt{t}^\frac{2(\mathtt{N}-1)}{2-\mathtt{N}}_\mathtt{n}\prod_{i=1}^{m}\ell_i(\mathtt{t}_\mathtt{n})\mathtt{d}\mathtt{t}_\mathtt{n}\\
	&\ge \frac{\wp b'r_0^2}{(\mathtt{N}-2)^2\mathfrak{O}_2}\prod_{i=1}^{m}\ell_i^\star\int_{0}^{1}\Xi_{r_0}(\mathtt{t}_{\mathtt{n}}, \mathtt{t}_\mathtt{n})\mathtt{t}^\frac{2(\mathtt{N}-1)}{2-\mathtt{N}}_\mathtt{n} \mathtt{d}\mathtt{t}_\mathtt{n}\\
	&\ge b'.
\end{aligned}
$$
Continuing with this bootstrapping argument, we get
$$
\begin{aligned}
	\min_{\mathtt{s}\in[0,1]}\left(\aleph  \mathtt{u}_1\right)=&\min_{\mathtt{s}\in[0,1]} \int_{0}^{1}\Xi_{r_0}(\mathtt{s}, \mathtt{t}_1)\ell(\mathtt{t}_1)\mathtt{g}_1\Bigg[ \int_{0}^{1}\Xi_{r_0}(\mathtt{t}_1, \mathtt{t}_2)\ell(\mathtt{t}_2)\mathtt{g}_2\Bigg[ \int_{0}^{1}\Xi_{r_0}(\mathtt{t}_2, \mathtt{t}_3)\ell(\mathtt{t}_3)\mathtt{g}_4 \cdots\\
	&\hskip1cm\mathtt{g}_{\mathtt{n}-1}\Bigg[ \int_{0}^{1}\Xi_{r_0}(\mathtt{t}_{\mathtt{n}-1}, \mathtt{t}_\mathtt{n})\ell(\mathtt{t}_\mathtt{n})\mathtt{g}_\mathtt{n}\big( \mathtt{u}_1(\mathtt{t}_\mathtt{n})\big)\mathtt{d}\mathtt{t}_\mathtt{n}\Bigg]\cdot\cdot\cdot \Bigg]\mathtt{d}\mathtt{t}_3\Bigg]\mathtt{d}\mathtt{t}_2\Bigg]\mathtt{d}\mathtt{t}_1\\
	\ge&\,b'.
\end{aligned}
$$
Therefore, we have
$$\ss(\aleph \mathtt{u}_1)>b',~\textnormal{for}~\mathtt{u}_1\in\mathtt{E}(\ss,b',c').$$
This implies that condition $(a)$ of Theorem \ref{t412} is satisfied.
 
Finally, if $\mathtt{u}_1\in\mathtt{E}(\ss,b',c'),$ then what we have already proved, $\ss(\aleph \mathtt{u}_1)>b'.$ Which proves the condition $(c)$ of Theorem \ref{t412}. To sum up, all the conditions of Theorem \ref{t412} are satisfied. Therefore, $\aleph $ has at least three fixed points, that is, problem \eqref{eq13}--\eqref{eq130} has at least three positive solutions $({}^1\mathtt{u}_1,{}^1\mathtt{u}_2,\cdot\cdot\cdot,{}^1\mathtt{u}_\mathtt{n}),$ $({}^2\mathtt{u}_1,{}^2\mathtt{u}_2,\cdot\cdot\cdot,{}^2\mathtt{u}_\mathtt{n})$ and $({}^3\mathtt{u}_1,{}^3\mathtt{u}_2,\cdot\cdot\cdot,{}^3\mathtt{u}_\mathtt{n})$ with $\Vert{}^{\dot{\iota}} \mathtt{u}_1\Vert<a',$ $b'<\ss({}^{\dot{\iota}} \mathtt{u}_2),$ $\Vert{}^{\dot{\iota}} \mathtt{u}_3\Vert>a'$ and $\ss({}^{\dot{\iota}} \mathtt{u}_3)<b'$ for ${\dot{\iota}} =1,2,\cdot\cdot\cdot,\mathtt{n}.$ \qed
\end{proof}		
For $\sum_{i=1}^{m}\frac{1}{\mathtt{p}_i}=1$ and $\sum_{i=1}^{m}\frac{1}{\mathtt{p}_i}>1,$ we have following results.
\begin{theorem}
	Assume that	$(\mathcal{J}_1)$--$(\mathcal{J}_3)$ hold. Let $0<a'<b'< c'$ and suppose that $\mathtt{g}_{\dot{\iota}} ,\,{\dot{\iota}} =1,2,\cdot\cdot\cdot,\mathtt{n}$ satisfies $(\mathcal{J}_{12}),\,(\mathcal{J}_{13})$ and 
	\begin{itemize}
		\item [$(\mathcal{J}_{14})$] $\mathtt{g}_{\dot{\iota}} (\mathtt{u})<\frac{a'}{\mathfrak{O}_3}$ for $0\le\mathtt{u}\le a',$ where $\mathfrak{O}_3=\frac{ r_0^2}{(\mathtt{N}-2)^2}\Vert\widehat{\Xi}_{r_0}\Vert_{\infty}\prod_{i=1}^{m}\Vert\ell_i\Vert_{\mathtt{p}_i}.$
	\end{itemize}
	Then the iterative system \eqref{eq13}--\eqref{eq130} has at least three positive solutions $({}^1\mathtt{u}_1,{}^1\mathtt{u}_2,\cdot\cdot\cdot,{}^1\mathtt{u}_\mathtt{n}),$ $({}^2\mathtt{u}_1,{}^2\mathtt{u}_2,\cdot\cdot\cdot,{}^2\mathtt{u}_\mathtt{n})$ and $({}^3\mathtt{u}_1,{}^3\mathtt{u}_2,\cdot\cdot\cdot,{}^3\mathtt{u}_\mathtt{n})$ with $\Vert{}^{\dot{\iota}} \mathtt{u}_1\Vert<a',$ $b'<\ss({}^{\dot{\iota}} \mathtt{u}_2),$ $\Vert{}^{\dot{\iota}} \mathtt{u}_3\Vert>a'$ and $\ss({}^{\dot{\iota}} \mathtt{u}_3)<b'$ for ${\dot{\iota}} =1,2,\cdot\cdot\cdot,\mathtt{n}.$
\end{theorem}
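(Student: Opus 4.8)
The plan is to apply the Leggett--Williams fixed point theorem (Theorem \ref{t412}) to the completely continuous operator $\aleph$ of \eqref{eqmain}, following verbatim the proof of Theorem \ref{t42}; the only structural change is that the case $\sum_{i=1}^{m}\frac{1}{\mathtt{p}_i}=1$ leaves no conjugate exponent $\mathtt{q}>1$, so every kernel integral must be controlled by the \emph{second} part of H\"older's inequality (Theorem \ref{holder}) instead of the generalized version. Concretely, wherever the proof of Theorem \ref{t42} produced $\Vert\widehat{\Xi}_{r_0}\Vert_\mathtt{q}$ I would instead estimate, for any admissible upper bound $M$ on $\mathtt{g}_\mathtt{n}$,
\[
\int_0^1\Xi_{r_0}(\mathtt{t}_\mathtt{n},\mathtt{t}_\mathtt{n})\ell(\mathtt{t}_\mathtt{n})\,M\,\mathtt{d}\mathtt{t}_\mathtt{n}=\frac{r_0^2}{(\mathtt{N}-2)^2}\,M\int_0^1\widehat{\Xi}_{r_0}(\mathtt{t}_\mathtt{n})\prod_{i=1}^{m}\ell_i(\mathtt{t}_\mathtt{n})\,\mathtt{d}\mathtt{t}_\mathtt{n}\le\frac{r_0^2}{(\mathtt{N}-2)^2}\,M\,\Vert\widehat{\Xi}_{r_0}\Vert_\infty\prod_{i=1}^{m}\Vert\ell_i\Vert_{\mathtt{p}_i},
\]
where the last inequality combines the generalized estimate $\Vert\prod_i\ell_i\Vert_1\le\prod_i\Vert\ell_i\Vert_{\mathtt{p}_i}$ with $\Vert fg\Vert_1\le\Vert f\Vert_1\Vert g\Vert_\infty$ applied to $g=\widehat{\Xi}_{r_0}$. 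This is exactly the constant $\mathfrak{O}_3$ of $(\mathcal{J}_{14})$.

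First I would verify condition $(b)$. For $\mathtt{u}_1\in\overline{\mathtt{E}}_{c'}$ one has $0\le\mathtt{u}_1(\mathtt{s})\le c'$, so $(\mathcal{J}_{13})$ bounds $\mathtt{g}_\mathtt{n}$ and, feeding this through the innermost integral and iterating the bootstrapping argument $\mathtt{n}$ times (each nested layer contributing a factor bounded by $c'$ through the displayed $\Vert\cdot\Vert_\infty$ estimate), yields $\Vert\aleph\mathtt{u}_1\Vert\le c'$, hence $\aleph:\overline{\mathtt{E}}_{c'}\to\overline{\mathtt{E}}_{c'}$. The identical computation with $(\mathcal{J}_{14})$ and the pair $a',\mathfrak{O}_3$ in place of $c',\mathfrak{O}_1$ gives $\Vert\aleph\mathtt{u}_1\Vert<a'$ whenever $\Vert\mathtt{u}_1\Vert<a'$, which is precisely condition $(b)$ of Theorem \ref{t412}.

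Next I would check condition $(a)$. The constant function $\mathtt{u}_1\equiv(b'+c')/2$ lies in $\mathtt{E}(\ss,b',c')$ with $\ss(\mathtt{u}_1)>b'$, so that set is nonempty; and for any $\mathtt{u}_1\in\mathtt{E}(\ss,b',c')$ one has $b'\le\mathtt{u}_1(\mathtt{s})\le c'$, whence $(\mathcal{J}_{12})$ together with Lemma \ref{l22}(iii) and the lower bound $\ell_i^\star<\ell_i$ from $(\mathcal{J}_3)$ yields, after the same $\mathtt{n}$-fold bootstrapping, $\min_{\mathtt{s}\in[0,1]}(\aleph\mathtt{u}_1)(\mathtt{s})\ge b'$, i.e.\ $\ss(\aleph\mathtt{u}_1)>b'$. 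This lower-bound chain is untouched by the change of H\"older exponent, since it uses only the pointwise kernel inequalities. Condition $(c)$ then follows exactly as in Theorem \ref{t42}, because $\ss(\aleph\mathtt{u}_1)>b'$ has already been established on $\mathtt{E}(\ss,b',c')$. With all three hypotheses of the Leggett--Williams theorem verified, $\aleph$ admits three fixed points with the stated norm and functional bounds, and the $\mathtt{n}$-tuple solutions are recovered iteratively by setting $\mathtt{u}_{\mathtt{n}+1}=\mathtt{u}_1$ as before.

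The main obstacle is essentially bookkeeping rather than analysis: one must apply the $L^1$--$L^\infty$ splitting consistently at each of the $\mathtt{n}$ nested integrals so that every layer contributes exactly the intended factor, and one relies on the finiteness of $\Vert\widehat{\Xi}_{r_0}\Vert_\infty$ assumed for $\mathfrak{O}_3$ exactly as in Theorem \ref{t44}. No genuinely new analytic difficulty arises beyond replacing the conjugate-exponent H\"older estimate by its $L^1$--$L^\infty$ counterpart.
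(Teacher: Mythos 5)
Your proof follows exactly the route the paper intends for this case: rerun the Leggett--Williams argument of Theorem \ref{t42} verbatim, replacing the conjugate-exponent H\"older estimate by the $L^1$--$L^\infty$ splitting $\int_0^1\widehat{\Xi}_{r_0}\prod_{i=1}^{m}\ell_i\,\mathtt{d}\mathtt{t}\le\Vert\widehat{\Xi}_{r_0}\Vert_\infty\prod_{i=1}^{m}\Vert\ell_i\Vert_{\mathtt{p}_i}$, which is precisely how $\mathfrak{O}_3$ arises and why the lower-bound chain involving $\mathfrak{O}_2$ is untouched; the paper states this theorem without proof for exactly this reason, so your reconstruction matches its implicit argument. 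The only caveat, which you handle implicitly and which is an inconsistency in the paper's statement rather than a gap in your reasoning, is that $(\mathcal{J}_{13})$ must here be read with $\mathfrak{O}_3$ in place of $\mathfrak{O}_1$, since the conjugate exponent $\mathtt{q}$ entering $\mathfrak{O}_1$ does not exist when $\sum_{i=1}^{m}1/\mathtt{p}_i=1$.
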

\begin{theorem}
	Assume that	$(\mathcal{J}_1)$--$(\mathcal{J}_3)$ hold. Let $0<a'<b'< c'$ and suppose that $\mathtt{g}_{\dot{\iota}} ,\,{\dot{\iota}} =1,2,\cdot\cdot\cdot,\mathtt{n}$ satisfies $(\mathcal{J}_{12}),\,(\mathcal{J}_{13})$ and 
	\begin{itemize}
		\item [$(\mathcal{J}_{15})$] $\mathtt{g}_{\dot{\iota}} (\mathtt{u})<\frac{a'}{\mathfrak{O}_4}$ for $0\le\mathtt{u}\le a',$ where $\mathfrak{O}_4=\frac{ r_0^2}{(\mathtt{N}-2)^2}\Vert\widehat{\Xi}_{r_0}\Vert_{\infty}\prod_{i=1}^{m}\Vert\ell_i\Vert_1.$
	\end{itemize}
	Then the iterative system \eqref{eq13}--\eqref{eq130} has at least three positive solutions $({}^1\mathtt{u}_1,{}^1\mathtt{u}_2,\cdot\cdot\cdot,{}^1\mathtt{u}_\mathtt{n}),$ $({}^2\mathtt{u}_1,{}^2\mathtt{u}_2,\cdot\cdot\cdot,{}^2\mathtt{u}_\mathtt{n})$ and $({}^3\mathtt{u}_1,{}^3\mathtt{u}_2,\cdot\cdot\cdot,{}^3\mathtt{u}_\mathtt{n})$ with $\Vert{}^{\dot{\iota}} \mathtt{u}_1\Vert<a',$ $b'<\ss({}^{\dot{\iota}} \mathtt{u}_2),$ $\Vert{}^{\dot{\iota}} \mathtt{u}_3\Vert>a'$ and $\ss({}^{\dot{\iota}} \mathtt{u}_3)<b'$ for ${\dot{\iota}} =1,2,\cdot\cdot\cdot,\mathtt{n}.$
\end{theorem}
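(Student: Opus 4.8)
The plan is to apply the Legget-William fixed point theorem (Theorem \ref{t412}) to the operator $\aleph$ of \eqref{eqmain} in exactly the manner of Theorem \ref{t42}; the present statement differs from that one only through the integrability regime $\sum_{i=1}^{m}\frac{1}{\mathtt{p}_i}>1$, so the single step that must be reworked is the H\"older estimate governing the upper bounds. First I would recall from Lemma \ref{l24} that $\aleph:\mathtt{E}\to\mathtt{E}$ is completely continuous, and take $\ss(\mathtt{u})=\min_{\mathtt{s}\in[0,1]}\mathtt{u}(\mathtt{s})$, a nonnegative continuous concave functional on $\mathtt{E}$ with $\ss(\mathtt{u})\le\Vert\mathtt{u}\Vert$ on $\overline{\mathtt{E}}_{c'}$; this places us squarely in the hypotheses of Theorem \ref{t412} with the ordering $0<a'<b'<c'$.

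To verify condition $(b)$ of Theorem \ref{t412} I would establish $\aleph(\overline{\mathtt{E}}_{c'})\subset\overline{\mathtt{E}}_{c'}$ and $\aleph(\overline{\mathtt{E}}_{a'})\subset\overline{\mathtt{E}}_{a'}$. Fixing $\mathtt{u}_1\in\overline{\mathtt{E}}_{a'}$, inserting $\ell(\mathtt{s})=\frac{r_0^2}{(\mathtt{N}-2)^2}\mathtt{s}^{\frac{2(\mathtt{N}-1)}{2-\mathtt{N}}}\prod_{i=1}^{m}\ell_i(\mathtt{s})$ and using Lemma \ref{l22}(ii), the innermost integral is bounded via $(\mathcal{J}_{15})$ by a constant multiple of $\int_0^1\widehat{\Xi}_{r_0}(\mathtt{t}_\mathtt{n})\prod_{i=1}^{m}\ell_i(\mathtt{t}_\mathtt{n})\mathtt{d}\mathtt{t}_\mathtt{n}$. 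This is the one genuinely new point: because $\sum_{i=1}^{m}\frac{1}{\mathtt{p}_i}>1$ there is no conjugate exponent $\mathtt{q}$ with $\sum\frac{1}{\mathtt{p}_i}+\frac{1}{\mathtt{q}}=1$, so the generalized H\"older step used in Theorem \ref{t42} is unavailable. Instead I would observe that $\widehat{\Xi}_{r_0}$ is continuous, hence in $L^\infty[0,1]$, while each $\ell_i$, lying in $L^{\mathtt{p}_i}[0,1]$ on the finite interval, also lies in $L^1[0,1]$, and then apply the second ($L^1$--$L^\infty$) part of Theorem \ref{holder} to control the integral by $\Vert\widehat{\Xi}_{r_0}\Vert_\infty\prod_{i=1}^{m}\Vert\ell_i\Vert_1$, which is exactly the quantity encoded in $\mathfrak{O}_4$. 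The innermost integral is therefore $\le a'$, and the same layered bootstrapping as in Theorem \ref{t42} propagates this through all $\mathtt{n}$ stages to yield $\Vert\aleph\mathtt{u}_1\Vert\le a'$; the identical computation at level $c'$, using $(\mathcal{J}_{13})$ together with the same $L^1$--$L^\infty$ estimate, gives $\aleph(\overline{\mathtt{E}}_{c'})\subset\overline{\mathtt{E}}_{c'}$, so condition $(b)$ holds.

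Finally, conditions $(a)$ and $(c)$ transcribe verbatim from Theorem \ref{t42}, since their lower estimates rest only on the cone inequality $\wp\,\Xi_{r_0}(\mathtt{t},\mathtt{t})\le\Xi_{r_0}(\mathtt{s},\mathtt{t})$ of Lemma \ref{l22}(iii) and the minorant $\ell_i^\star$ of $(\mathcal{J}_3)$, neither of which invokes H\"older's inequality. The test function $\mathtt{u}_1\equiv(b'+c')/2$ shows $\{\mathtt{u}\in\mathtt{E}(\ss,b',c'):\ss(\mathtt{u})>b'\}\ne\emptyset$, and bootstrapping the lower estimate furnished by $(\mathcal{J}_{12})$ gives $\ss(\aleph\mathtt{u}_1)=\min_{\mathtt{s}\in[0,1]}(\aleph\mathtt{u}_1)(\mathtt{s})>b'$ for every $\mathtt{u}_1\in\mathtt{E}(\ss,b',c')$, which secures both $(a)$ and $(c)$. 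Theorem \ref{t412} then produces three fixed points ${}^1\mathtt{u}_1,{}^2\mathtt{u}_1,{}^3\mathtt{u}_1\in\overline{\mathtt{E}}_{c'}$ with the stated relations to $a'$ and $b'$; setting $\mathtt{u}_{\mathtt{n}+1}=\mathtt{u}_1$ and unwinding $\mathtt{u}_{\dot{\iota}}(\mathtt{s})=\int_0^1\Xi_{r_0}(\mathtt{s},s)\ell(s)\mathtt{g}_{\dot{\iota}}(\mathtt{u}_{{\dot{\iota}}+1}(s))\mathtt{d}s$ for ${\dot{\iota}}=1,\dots,\mathtt{n}$ delivers the three solution tuples. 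I expect the only real obstacle to be the correct organization of this single integrability estimate: once the switch from the generalized H\"older inequality to the $L^1$--$L^\infty$ pairing is in place, the remainder is a routine transcription of the Legget-William argument already carried out for Theorem \ref{t42}.
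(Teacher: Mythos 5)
Your overall plan --- rerun the Legget--William argument of Theorem \ref{t42}, keep the lower-bound verifications of conditions $(a)$ and $(c)$ untouched, and rework only the upper-bound estimate behind condition $(b)$ --- is precisely what the paper intends (it states this theorem without proof, as a companion to Theorem \ref{t42}), and your transcription of the unchanged parts is accurate. The difficulty is that the single step you flag as ``the one genuinely new point'' is exactly the step that fails. The second part of Theorem \ref{holder} pairs \emph{one} $L^1$ function with \emph{one} $L^\infty$ function; applied here it gives $\int_0^1\widehat{\Xi}_{r_0}(\mathtt{t})\prod_{i=1}^m\ell_i(\mathtt{t})\,\mathtt{d}\mathtt{t}\le \Vert\widehat{\Xi}_{r_0}\Vert_\infty\,\bigl\Vert\prod_{i=1}^m\ell_i\bigr\Vert_1$, and only after one knows $\prod_{i=1}^m\ell_i\in L^1[0,1]$, which does \emph{not} follow from $\ell_i\in L^{\mathtt{p}_i}$ when $\sum_{i}1/\mathtt{p}_i>1$ (take $\ell_1=\ell_2=\mathtt{t}^{-1/2}\in L^{3/2}$: the product $\mathtt{t}^{-1}$ is not integrable). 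Worse, even when the product is integrable, the quantity $\prod_{i=1}^m\Vert\ell_i\Vert_1$ appearing in $\mathfrak{O}_4$ does not dominate $\bigl\Vert\prod_{i=1}^m\ell_i\bigr\Vert_1$; the inequality $\bigl\Vert\prod_i\ell_i\bigr\Vert_1\le\prod_i\Vert\ell_i\Vert_1$ is false in general, and for equal weights it goes the wrong way by Cauchy--Schwarz. Concretely, $\ell_1=\ell_2=\mathtt{t}^{-1/4}\in L^{3/2}[0,1]$ satisfies $(\mathcal{J}_2)$--$(\mathcal{J}_3)$ with $\sum_i1/\mathtt{p}_i=4/3>1$, yet $\Vert\ell_1\ell_2\Vert_1=2>16/9=\Vert\ell_1\Vert_1\Vert\ell_2\Vert_1$. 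So your chain of inequalities for $\Vert\aleph\mathtt{u}_1\Vert\le a'$ (and likewise at level $c'$) breaks at this point, and no iteration of the $L^1$--$L^\infty$ pairing can manufacture the constant $\mathfrak{O}_4$.

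There is a second false assertion: ``$\widehat{\Xi}_{r_0}$ is continuous, hence in $L^\infty[0,1]$.'' Since $\mathtt{N}>2$, the exponent $\frac{2(\mathtt{N}-1)}{2-\mathtt{N}}$ is less than $-2$, so $\widehat{\Xi}_{r_0}(\mathtt{t})=\Xi_{r_0}(\mathtt{t},\mathtt{t})\mathtt{t}^{2(\mathtt{N}-1)/(2-\mathtt{N})}$ is continuous only on $(0,1]$ and blows up as $\mathtt{t}\to0^+$: if $\upbeta>0$ then $\Xi_{r_0}(0,0)>0$, and if $\upbeta=0$ the kernel vanishes only to first order in $\mathtt{t}$, which cannot compensate such an exponent. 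Hence $\Vert\widehat{\Xi}_{r_0}\Vert_\infty=+\infty$, $\mathfrak{O}_4$ is not a finite constant, and $(\mathcal{J}_{15})$ as written is vacuous for nonnegative $\mathtt{g}_{\dot{\iota}}$. In fairness, both defects are inherited from the paper: the constant $\mathfrak{O}_4$ (like $\mathfrak{M}_2$ and $\Bbbk_4$ earlier) encodes the same unjustified product estimate, and the paper never proves this case. A repairable version of your argument would assume $\widehat{\Xi}_{r_0}\prod_{i=1}^m\ell_i\in L^1[0,1]$ (which is what the standing integrability of $r^{\mathtt{N}-1}\ell$ is meant to supply) and would replace $\mathfrak{O}_4$ by $\frac{r_0^2}{(\mathtt{N}-2)^2}\bigl\Vert\widehat{\Xi}_{r_0}\prod_{i=1}^m\ell_i\bigr\Vert_1$; with that constant your bootstrapping and the Legget--William conclusion go through verbatim, but that is a different theorem from the one stated.
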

\begin{example}
	Consider the following nonlinear elliptic system of equations,
\begin{equation}\label{e8}
	\triangle{  \mathtt{u}_{{\dot{\iota}} }}+\frac{(\mathtt{N}-2)^2r_0^{2\mathtt{N}-2}}{\vert x\vert^{2\mathtt{N}-2}}\mathtt{u}_{\dot{\iota}} +\ell(\vert x\vert)\mathtt{g}_{{\dot{\iota}} }(\mathtt{u}_{{\dot{\iota}} +1})=0,~1<\vert x\vert<2,
\end{equation}
\begin{equation}\label{e9}\left.
	\begin{aligned}
		&\hskip0.9cm \mathtt{u}_{{\dot{\iota}} }=0~~\text{on}~~\vert x\vert=1~\text{and}~\vert x\vert=2,\\
		&\mathtt{u}_{{\dot{\iota}} }=0~~\text{on}~~\vert x\vert=1~\text{and}~\frac{\partial \mathtt{u}_{\dot{\iota}} }{\partial r}=0~\text{on}~\vert x\vert=2,\\
		&\frac{\partial \mathtt{u}_{\dot{\iota}} }{\partial r}=0~~\text{on}~~\vert x\vert=1~\text{and}~\mathtt{u}_{{\dot{\iota}} }=0~\text{on}~\vert x\vert=2,
	\end{aligned}\right\}
\end{equation}
\end{example}
where $ r_0=1,$ $\mathtt{N}=3,$ ${\dot{\iota}} \in\{1,2\},\, \mathtt{u}_3= \mathtt{u}_1,$  $\ell(\mathtt{s})=\frac{1}{\mathtt{s}^4}\prod_{i=1}^{2}\ell_i(\mathtt{s}),$ $\ell_i(\mathtt{s})=\ell_i\left(\frac{1}{\mathtt{s}}\right),$
in which
$$\ell_1(t)=\frac{1}{\sqrt{t+1}}~~~~\text{~~and~~}~~~~\ell_2(t)=\frac{1}{\sqrt{t^2+25}},$$
then it is clear that
$$\ell_1, \ell_2\in L^\mathtt{p}[0,1]~~\text{~~and~~}\prod_{i=1}^{2}\ell_i^*=5.$$
Let 
$$
\begin{aligned}
\mathtt{g}_1(\mathtt{u})=\mathtt{g}_2(\mathtt{u})=\left\{\begin{array}{ll}\frac{3}{2}, \hskip1.3cm  \mathtt{u}\ge1,\vspace{1.2mm}\\	
\frac{1}{2}\mathtt{u}^2+1, \hskip0.4cm  \mathtt{u}<1.
\end{array}
\right.
\end{aligned}
$$
Let $\upalpha=\upbeta=\gamma=\updelta=1,$  $\varrho=2\cosh(1)+2\sinh(1)\approx5.436563658,$ 
$$
\begin{aligned}
\Xi_{r_0}(\mathtt{s},\mathtt{t})=\frac{1}{2\cosh(1)+2\sinh(1)}\left\{\begin{array}{ll}\big(\sinh(\mathtt{s})+\cosh(\mathtt{s})\big)\big(\sinh(1-\mathtt{t})+\cosh(1-\mathtt{t})\big), \hskip0.6cm 0  \le \mathtt{s}\le \mathtt{t}\le 1,\vspace{1.2mm}\\	
	\big(\sinh(\mathtt{t})+\cosh(\mathtt{t})\big)\big(\sinh(1-\mathtt{s})+\cosh(1-\mathtt{s})\big),
	\hskip0.6cm  0  \le \mathtt{t}\le \mathtt{s}\le 1,
\end{array}
\right.
\end{aligned}
$$
and $\wp=\frac{3}{\cosh(1)}.$ 
Also, 
$$
\begin{aligned}
\mathfrak{O}_1=\frac{\wp r_0^2}{(\mathtt{N}-2)^2}\prod_{i=1}^{m}\ell_i^\star\int_{0}^{1}\Xi_{r_0}(\mathtt{t}, \mathtt{t})\mathtt{t}^\frac{2(\mathtt{N}-1)}{2-\mathtt{N}} \mathtt{d}\mathtt{t}\approx4.627034665\times10^6.
\end{aligned}
$$
Let $\mathtt{p}_1=3,\mathtt{p}_2=2$ and $\mathtt{q}=6,$ then $\frac{1}{\mathtt{p}_1}+\frac{1}{\mathtt{p}_2}+\frac{1}{\mathtt{q}}=1$ and
$$
\begin{aligned}
\mathfrak{O}_2=\frac{  r_0^2}{(\mathtt{N}-2)^2}\Vert\widehat{\Xi}_{r_0}\Vert_\mathtt{q}\prod_{i=1}^{m}\Vert\ell_i\Vert_{\mathtt{p}_i}\approx9.696074194\times10^{7}.
\end{aligned}
$$
Choose $a'=10^{7},\,b'=10^{8}$ and $c'=10^{9}.$ Then,
$$
\begin{aligned}
\mathtt{g}_1(\mathtt{u})=\mathtt{g}_2(\mathtt{u})&\,\le2.161211386=\frac{a'}{\mathfrak{O}_1},~\mathtt{u}\in[0,10^{7}],\\
\mathtt{g}_1(\mathtt{u})=\mathtt{g}_2(\mathtt{u})&\,\ge1.031345243=\frac{b'}{\mathfrak{O}_2},~\mathtt{u}\in[10^{8},10^{9}],\\
\mathtt{g}_1(\mathtt{u})=\mathtt{g}_2(\mathtt{u})&\,\le216.1211386=\frac{c'}{\mathfrak{O}_1},~\mathtt{u}\in[0,10^{9}].
\end{aligned}
$$
Therefore, by Theorem \ref{t43}, the boundary value problem \eqref{e6}--\eqref{e7} has at least two positive radial solutions $({}^{\dot{\iota}} \mathtt{u}_1,{}^{\dot{\iota}} \mathtt{u}_2),\,{\dot{\iota}} =1,2$ such that 
$$\max_{\mathtt{s}\in[0,1]}{}^{\dot{\iota}} \mathtt{u}_1(\mathtt{s})<10^{7},~10^{8}<\min_{\mathtt{s}\in[0,1]}{}^{\dot{\iota}} \mathtt{u}_2(\mathtt{s})<\max_{\mathtt{s}\in[0,1]}{}^{\dot{\iota}} \mathtt{u}_2(\mathtt{s})<10^{9},~~\text{for}~~{\dot{\iota}} =1,2,$$
$$10^{7}<\max_{\mathtt{s}\in[0,1]}{}^{\dot{\iota}} \mathtt{u}_3(\mathtt{s})<10^{9},~~\min_{\mathtt{s}\in[0,1]}{}^{\dot{\iota}} \mathtt{u}_3(\mathtt{s})<10^{8},~~\text{for}~~{\dot{\iota}} =1,2.$$	
\section{Existence of Unique Positive Radial Solution}
In the next, for the existence of unique positive radial solution to the boundary value problem \eqref{eq13}--\eqref{eq130}, we employ two metrics under Rus’s theorem (see, \cite{alm, stin} for more details). Consider the set of real valued functions that are defined and continuous on $[ 0, 1]$ and denote this space by $\mathtt{v}= C([ 0, 1]).$ For functions $\mathtt{v}_1,\mathtt{v}_2\in\mathtt{v},$ consider the following two metrics on $\mathtt{v}:$
\begin{equation}\label{eq311}
	\mathtt{d}(\mathtt{v}_1,\mathtt{v}_2)=\max_{\mathtt{t}\in[ 0, 1]}\vert\mathtt{v}_1(\mathtt{t})-\mathtt{v}_2(\mathtt{t})\vert,
\end{equation}
\begin{equation}\label{eq312}
	\uprho(\mathtt{v}_1,\mathtt{v}_2)=\left[\int_{ 0}^{1}\vert\mathtt{v}_1(\mathtt{t})-\mathtt{v}_2(\mathtt{t})\vert^{\mathtt{p}}\mathtt{d}\mathtt{t}\right]^{\frac{1}{\mathtt{p}}},~~\mathtt{p}>1.
\end{equation}
For $\mathtt{d}$ in \eqref{eq311}, the pair $(C([ 0, 1]), \mathtt{d})$ forms a complete metric space. For $\uprho$ in  \eqref{eq312}, the pair $(C([ 0, 1]), \uprho)$ forms a metric space. The relationship between the two metrics on $\mathtt{v}$ is given by
\begin{equation}\label{eq313}
	\uprho(\mathtt{v}_1,\mathtt{v}_2)\le\mathtt{d}(\mathtt{v}_1,\mathtt{v}_2)~~\text{for~all}~~\mathtt{v}_1,\mathtt{v}_2\in\mathtt{v}.
\end{equation}
\begin{theorem}[Rus \cite{rus}]\label{t2}
	Let $\mathtt{v}$ be a nonempty set and let $\mathtt{d}$ and $\uprho$ be two
	metrics on $\mathtt{v}$ such that $(\mathtt{v}, \mathtt{d})$ forms a complete metric space. If the mapping $\Lambda :\mathtt{v}\to \mathtt{v}$ is continuous with respect to $\mathtt{d}$ on $\mathtt{v}$ and 
	\begin{equation}\label{eq31}
		\mathtt{d}(\Lambda \mathtt{v}_1,\Lambda \mathtt{v}_2)\le\alpha_1\uprho(\mathtt{v}_1,\mathtt{v}_2),
	\end{equation}
	for some $\alpha_1>0$ and for all $\mathtt{v}_1,\mathtt{v}_2\in\mathtt{v},$
	\begin{equation}\label{eq32}
		\uprho(\Lambda \mathtt{v}_1,\Lambda \mathtt{v}_2)\le\alpha_2\uprho(\mathtt{v}_1,\mathtt{v}_2),
	\end{equation}
	for some $0<\alpha_2<1$ for all $\mathtt{v}_1,\mathtt{v}_2\in\mathtt{v},$
	then there is a unique $\mathtt{v}^*\in\mathtt{v}$ such that $\Lambda \mathtt{v}^*=\mathtt{v}^*.$
\end{theorem}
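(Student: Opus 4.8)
The plan is to run a Picard iteration and play the two metrics against each other: the $\uprho$-contraction \eqref{eq32} forces geometric decay of successive iterates, the bound \eqref{eq31} ports that decay into the metric $\mathtt{d}$ in which the space is complete, and the $\mathtt{d}$-continuity of $\Lambda$ then closes the fixed-point equation. Completeness is available only for $\mathtt{d}$, so the strategy is to build the candidate limit there and merely borrow the contraction from $\uprho$.

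First I would fix an arbitrary $\mathtt{v}_0\in\mathtt{v}$ and set $\mathtt{v}_{n+1}=\Lambda\mathtt{v}_n$ for $n\ge0$. Iterating \eqref{eq32} gives
\[
\uprho(\mathtt{v}_{n+1},\mathtt{v}_n)=\uprho(\Lambda\mathtt{v}_n,\Lambda\mathtt{v}_{n-1})\le\alpha_2\,\uprho(\mathtt{v}_n,\mathtt{v}_{n-1})\le\cdots\le\alpha_2^{\,n}\,\uprho(\mathtt{v}_1,\mathtt{v}_0).
\]
Feeding this into \eqref{eq31} produces a companion estimate in the complete metric,
\[
\mathtt{d}(\mathtt{v}_{n+1},\mathtt{v}_n)=\mathtt{d}(\Lambda\mathtt{v}_n,\Lambda\mathtt{v}_{n-1})\le\alpha_1\,\uprho(\mathtt{v}_n,\mathtt{v}_{n-1})\le\alpha_1\,\alpha_2^{\,n-1}\,\uprho(\mathtt{v}_1,\mathtt{v}_0).
\]
Since $0<\alpha_2<1$, summing the telescoping tail yields, for $m>n$,
\[
\mathtt{d}(\mathtt{v}_m,\mathtt{v}_n)\le\sum_{k=n}^{m-1}\mathtt{d}(\mathtt{v}_{k+1},\mathtt{v}_k)\le\frac{\alpha_1\,\alpha_2^{\,n-1}}{1-\alpha_2}\,\uprho(\mathtt{v}_1,\mathtt{v}_0),
\]
which tends to $0$ as $n\to\infty$; hence $\{\mathtt{v}_n\}$ is Cauchy in $(\mathtt{v},\mathtt{d})$. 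Completeness of $(\mathtt{v},\mathtt{d})$ then furnishes a limit $\mathtt{v}^*\in\mathtt{v}$ with $\mathtt{d}(\mathtt{v}_n,\mathtt{v}^*)\to0$.

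To identify the limit as a fixed point, I would invoke the hypothesis that $\Lambda$ is continuous with respect to $\mathtt{d}$: from $\mathtt{d}(\mathtt{v}_n,\mathtt{v}^*)\to0$ it follows that $\mathtt{v}_{n+1}=\Lambda\mathtt{v}_n\to\Lambda\mathtt{v}^*$ in $\mathtt{d}$, while by construction $\mathtt{v}_{n+1}\to\mathtt{v}^*$; uniqueness of limits in the metric space $(\mathtt{v},\mathtt{d})$ gives $\Lambda\mathtt{v}^*=\mathtt{v}^*$. For uniqueness, if $\mathtt{w}^*$ is another fixed point, then \eqref{eq32} applied to the two fixed points gives $\uprho(\mathtt{v}^*,\mathtt{w}^*)=\uprho(\Lambda\mathtt{v}^*,\Lambda\mathtt{w}^*)\le\alpha_2\,\uprho(\mathtt{v}^*,\mathtt{w}^*)$, and $0<\alpha_2<1$ forces $\uprho(\mathtt{v}^*,\mathtt{w}^*)=0$, whence $\mathtt{v}^*=\mathtt{w}^*$ because $\uprho$ is a metric.

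The delicate point, and the very reason the two-metric framework is needed, is that completeness is postulated only for $\mathtt{d}$ whereas the contraction constant lives in $\uprho$; the $\uprho$-contraction alone cannot manufacture a convergent subsequence, so the auxiliary inequality \eqref{eq31} is indispensable for transferring the geometric decay into the complete metric. I would stress that $\alpha_1$ is permitted to be arbitrary (in particular it may exceed $1$): only $\alpha_2<1$ drives the convergence, and it is the mere $\mathtt{d}$-continuity of $\Lambda$, not any Lipschitz bound, that legitimises passing to the limit in $\mathtt{v}_{n+1}=\Lambda\mathtt{v}_n$. This is exactly the flexibility exploited in the sequel, where $\Lambda$ will be shown to satisfy \eqref{eq31} and \eqref{eq32} on $C([0,1])$ with the sup-metric $\mathtt{d}$ and the $L^{\mathtt{p}}$-type metric $\uprho$ of \eqref{eq311}--\eqref{eq312}.
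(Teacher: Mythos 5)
Your proof is correct: the Picard iteration, the geometric decay in $\uprho$ transferred into $\mathtt{d}$ via \eqref{eq31}, the Cauchy/completeness step in $(\mathtt{v},\mathtt{d})$, the use of $\mathtt{d}$-continuity to close the fixed-point equation, and the uniqueness argument via the $\uprho$-contraction are exactly the classical Maia--Rus argument. Note that the paper itself offers no proof of this statement --- it is imported verbatim from Rus \cite{rus} --- so there is nothing internal to compare against; your argument matches the standard one in the cited source, the only cosmetic remark being that your telescoping bound $\mathtt{d}(\mathtt{v}_m,\mathtt{v}_n)\le\frac{\alpha_1\alpha_2^{\,n-1}}{1-\alpha_2}\uprho(\mathtt{v}_1,\mathtt{v}_0)$ is valid for $n\ge1$, which suffices for the Cauchy property.
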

Denote $\Upsilon(\mathtt{t})=\Xi_{r_0}(\mathtt{t}, \mathtt{t})\mathtt{t}^\frac{2(\mathtt{N}-1)}{2-\mathtt{N}}\prod_{i=1}^{m}\ell_i(\mathtt{t})\mathtt{d}\mathtt{t}.$	
\begin{theorem}\label{tmet}
	Assume that $(\mathcal{J}_1),\,(\mathcal{J}_3)$ and the following codition are satisfied.
	\begin{itemize}
		\item [$(\mathcal{J}_{14})$] there exists a number $\mathtt{K}>0$ such that
		$$\vert\mathtt{g}_{\dot{\iota}} (\mathtt{u})-\mathtt{g}_{\dot{\iota}} (\mathtt{v})\vert\le\mathtt{K}\vert\mathtt{u}-\mathtt{v}\vert~~\text{for}~~\mathtt{u},\mathtt{v}\in\mathtt{v}.$$
	\end{itemize} Further, assume that there are constants $\mathtt{p}>1$ and $\mathtt{q}>1$ such that $1/\mathtt{p}+1/\mathtt{q}=1$ with
	\begin{equation}\label{eq81}
		\left[\frac{\wp\mathtt{K}r_0^2}{(\mathtt{N}-2)^2}\right]^{\mathtt{n}+1}\left[\int_0^1\vert\Upsilon(\mathtt{t})\vert \mathtt{d}\mathtt{t}\right]^\mathtt{n}\left[\int_0^1\vert\Upsilon(\mathtt{t})\vert^\mathtt{q} \mathtt{d}\mathtt{t}\right]^{\frac{1}{q}}<1,
	\end{equation}
	then the boundary value problem \eqref{eq13}--\eqref{eq130} has a unique positive radial solution in $\mathtt{v}.$ 
\end{theorem}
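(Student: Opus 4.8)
The plan is to cast the problem as a fixed-point equation for the operator $\aleph$ of \eqref{eqmain}, now viewed as a self-map $\Lambda=\aleph$ of $\mathtt{v}=C([0,1])$ carrying the two metrics \eqref{eq311} and \eqref{eq312}, and then to verify the hypotheses of Rus's theorem (Theorem \ref{t2}). First I would record the structural facts: $(\mathtt{v},\mathtt{d})$ is complete while $(\mathtt{v},\uprho)$ is a metric space, and by Lemma \ref{l24} together with $(\mathcal{J}_1)$ the operator carries the cone $\mathtt{E}$ into itself. Since every integrand in \eqref{eqmain} is nonnegative, any fixed point automatically lies in $\mathtt{E}$ and is therefore a nonnegative (indeed, by the cone estimate $\min_{\mathtt{s}}(\aleph\mathtt{v}_1)(\mathtt{s})\ge\wp\|\aleph\mathtt{v}_1\|$, a genuinely positive) radial solution; so it suffices to produce a \emph{unique} fixed point and positivity comes for free. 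I would also record the identity $\Xi_{r_0}(\mathtt{t},\mathtt{t})\ell(\mathtt{t})=\frac{r_0^2}{(\mathtt{N}-2)^2}\Upsilon(\mathtt{t})$, which is what lets the constant $\frac{r_0^2}{(\mathtt{N}-2)^2}$ and the factor $\Upsilon$ surface at every layer.

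The heart of the argument is a pointwise estimate for $|(\aleph\mathtt{v}_1)(\mathtt{s})-(\aleph\mathtt{v}_2)(\mathtt{s})|$ obtained by peeling the $\mathtt{n}$ nested integrals one at a time. At each layer I would combine the Lipschitz bound $|\mathtt{g}_{\dot{\iota}}(a)-\mathtt{g}_{\dot{\iota}}(b)|\le\mathtt{K}|a-b|$ from $(\mathcal{J}_{14})$ with the kernel inequality $\Xi_{r_0}(\mathtt{s},\mathtt{t})\le\Xi_{r_0}(\mathtt{t},\mathtt{t})$ from Lemma \ref{l22}(ii), so that one integration over an inner variable contributes a factor $\frac{\mathtt{K}r_0^2}{(\mathtt{N}-2)^2}\int_0^1\Upsilon(\mathtt{t})\,\mathtt{d}\mathtt{t}$ and replaces the inner difference by the difference at the next level. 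Iterating this bootstrapping through all $\mathtt{n}$ layers leaves a bound \emph{independent of} $\mathtt{s}$ of the form (constant)$\,\times\int_0^1\Upsilon(\mathtt{t}_{\mathtt{n}})|\mathtt{v}_1(\mathtt{t}_{\mathtt{n}})-\mathtt{v}_2(\mathtt{t}_{\mathtt{n}})|\,\mathtt{d}\mathtt{t}_{\mathtt{n}}$, after which a single application of H\"older's inequality (Theorem \ref{holder}, with exponents $\mathtt{p},\mathtt{q}$) converts the last integral into $\left[\int_0^1|\Upsilon(\mathtt{t})|^{\mathtt{q}}\,\mathtt{d}\mathtt{t}\right]^{1/\mathtt{q}}\uprho(\mathtt{v}_1,\mathtt{v}_2)$.

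Taking the maximum over $\mathtt{s}\in[0,1]$ then yields inequality \eqref{eq31}, namely $\mathtt{d}(\aleph\mathtt{v}_1,\aleph\mathtt{v}_2)\le\alpha_1\uprho(\mathtt{v}_1,\mathtt{v}_2)$, where $\alpha_1$ is the explicit product of powers of $\frac{\mathtt{K}r_0^2}{(\mathtt{N}-2)^2}$ and $\int_0^1\Upsilon$ together with the factor $\left[\int_0^1|\Upsilon|^{\mathtt{q}}\right]^{1/\mathtt{q}}$ appearing in \eqref{eq81}; combined with the comparison \eqref{eq313} this also gives $\mathtt{d}$-Lipschitz continuity of $\Lambda$, hence $\mathtt{d}$-continuity. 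For the second inequality \eqref{eq32} I would exploit that the pointwise bound above does not depend on $\mathtt{s}$: applying \eqref{eq313} gives $\uprho(\aleph\mathtt{v}_1,\aleph\mathtt{v}_2)\le\mathtt{d}(\aleph\mathtt{v}_1,\aleph\mathtt{v}_2)\le\alpha_1\uprho(\mathtt{v}_1,\mathtt{v}_2)$, so one may take $\alpha_2=\alpha_1$, and the smallness hypothesis \eqref{eq81} is precisely what forces $0<\alpha_2<1$.

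With \eqref{eq31} and \eqref{eq32} verified, $\mathtt{d}$-continuity established, and $(\mathtt{v},\mathtt{d})$ complete, Rus's theorem (Theorem \ref{t2}) delivers a unique $\mathtt{v}^\ast$ with $\aleph\mathtt{v}^\ast=\mathtt{v}^\ast$; by the opening remark $\mathtt{v}^\ast\in\mathtt{E}$, and reconstructing $(\mathtt{u}_1,\dots,\mathtt{u}_{\mathtt{n}})$ from $\mathtt{u}_1=\mathtt{v}^\ast$ through the iterative formulas produces the unique positive radial solution of \eqref{eq13}--\eqref{eq130}. I expect the main obstacle to be the bookkeeping in the peeling step: carrying the Lipschitz-times-kernel estimate cleanly through all $\mathtt{n}$ nested integrals so that the inner integrations factor out as the correct powers of $\int_0^1\Upsilon$ while exactly one H\"older step is reserved for the outermost comparison, and then checking that the accumulated constant is precisely the quantity that \eqref{eq81} bounds below $1$, which is what guarantees $\alpha_2<1$.
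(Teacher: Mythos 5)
Your proposal is correct and follows essentially the same route as the paper's own proof: the identical two-metric/Rus's-theorem framework, the same layer-by-layer peeling that combines the Lipschitz condition $(\mathcal{J}_{14})$ with the kernel bound $\Xi_{r_0}(\mathtt{s},\mathtt{t})\le\Xi_{r_0}(\mathtt{t},\mathtt{t})$ of Lemma \ref{l22}, exactly one H\"older step on the integral carrying $\vert\mathtt{v}_1-\mathtt{v}_2\vert$, continuity of $\Lambda$ with respect to $\mathtt{d}$ via \eqref{eq313}, and positivity of the unique fixed point via Lemma \ref{l24} (your outside-in peeling versus the paper's inside-out order is purely a difference of narration). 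The caveat you flag at the end is real but is equally present in the paper: the peeling actually yields the contraction constant $\left[\frac{\mathtt{K}r_0^2}{(\mathtt{N}-2)^2}\right]^{\mathtt{n}}\left[\int_0^1\vert\Upsilon(\mathtt{t})\vert\,\mathtt{d}\mathtt{t}\right]^{\mathtt{n}-1}\left[\int_0^1\vert\Upsilon(\mathtt{t})\vert^{\mathtt{q}}\,\mathtt{d}\mathtt{t}\right]^{1/\mathtt{q}}$, which differs from the quantity in \eqref{eq81} both by one factor of $\frac{\mathtt{K}r_0^2}{(\mathtt{N}-2)^2}\int_0^1\vert\Upsilon(\mathtt{t})\vert\,\mathtt{d}\mathtt{t}$ (the paper's bootstrapping overcounts a layer) and by the factor $\wp^{\mathtt{n}+1}<1$ that \eqref{eq81} contains but the estimate never produces, so your sketch closes this final comparison no less (and no more) rigorously than the original does.
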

\begin{proof}
	Let $\mathtt{u}_1, \mathtt{v}_1\in C([ 0, 1])$ and $\mathtt{t}\in[ 0, 1].$ Then by H\"older's inequality, we have 
	\begin{align}
		\bigg\vert\int_{0}^{1}\Xi_{r_0}(\mathtt{t}_{\mathtt{n}-1}, \mathtt{t}_\mathtt{n})&\ell(\mathtt{t}_\mathtt{n})\mathtt{g}_\mathtt{n}\big( \mathtt{u}_1(\mathtt{t}_\mathtt{n})\big)\mathtt{d}\mathtt{t}_\mathtt{n}-\int_{0}^{1}\Xi_{r_0}(\mathtt{t}_{\mathtt{n}-1}, \mathtt{t}_\mathtt{n})\ell(\mathtt{t}_\mathtt{n})\mathtt{g}_\mathtt{n}\big( \mathtt{v}_1(\mathtt{t}_\mathtt{n})\big)\mathtt{d}\mathtt{t}_\mathtt{n}\bigg\vert\nonumber\\
		&\le\int_{0}^{1}\vert\Xi_{r_0}(\mathtt{t}_{\mathtt{n}-1}, \mathtt{t}_\mathtt{n})\,\ell(\mathtt{t}_\mathtt{n})\vert\vert\mathtt{g}_\mathtt{n}\big( \mathtt{u}_1(\mathtt{t}_\mathtt{n})\big)-\mathtt{g}_\mathtt{n}\big( \mathtt{v}_1(\mathtt{t}_\mathtt{n})\big)\vert \mathtt{d}\mathtt{t}_\mathtt{n}\nonumber\\
		&\le\int_{0}^{1}\vert\Xi_{r_0}(\mathtt{t}_{\mathtt{n}}, \mathtt{t}_\mathtt{n})\,\ell(\mathtt{t}_\mathtt{n})\vert\,\mathtt{K}\vert \mathtt{u}_1(\mathtt{t}_\mathtt{n})-\mathtt{v}_1(\mathtt{t}_\mathtt{n})\vert \mathtt{d}\mathtt{t}_\mathtt{n}\nonumber\\
		&\le \frac{\mathtt{K} r_0^2}{(\mathtt{N}-2)^2}\int_{0}^{1}\vert\Upsilon(\mathtt{t}_\mathtt{n})\vert\vert \mathtt{u}_1(\mathtt{t}_\mathtt{n})-\mathtt{v}_1(\mathtt{t}_\mathtt{n})\vert \mathtt{d}\mathtt{t}_\mathtt{n}\nonumber\\
		&\le \frac{\mathtt{K} r_0^2}{(\mathtt{N}-2)^2}\left[\int_{0}^{1}\vert\Upsilon(\mathtt{t}_\mathtt{n})\vert^\mathtt{q}\mathtt{d}\mathtt{t}_\mathtt{n}\right]^\frac{1}{\mathtt{q}}\left[\int_0^1\vert \mathtt{u}_1(\mathtt{t}_\mathtt{n})-\mathtt{v}_1(\mathtt{t}_\mathtt{n})\vert^\mathtt{p} \mathtt{d}\mathtt{t}_\mathtt{n}\right]^\frac{1}{\mathtt{p}}\nonumber\\
		&\le \frac{\mathtt{K} r_0^2}{(\mathtt{N}-2)^2}\left[\int_{0}^{1}\vert\Upsilon(\mathtt{t}_\mathtt{n})\vert^\mathtt{q}\mathtt{d}\mathtt{t}_\mathtt{n}\right]^\frac{1}{\mathtt{q}}\uprho(\mathtt{u}_1,\mathtt{v}_1)\nonumber\\
		&\le\alpha_1^\star\uprho(\mathtt{u}_1,\mathtt{v}_1),\nonumber
	\end{align}
	where
	$$\alpha_1^\star=\frac{\mathtt{K} r_0^2}{(\mathtt{N}-2)^2}\left[\int_{0}^{1}\vert\Upsilon(\mathtt{t}_\mathtt{n})\vert^\mathtt{q}\right]^\frac{1}{\mathtt{q}}.$$
	It follows in similar manner for $0<\mathtt{t}_{\mathtt{n}-2}<1,$
	$$
	\begin{aligned}
		\bigg\vert\int_{0}^{1}\Xi_{r_0}(&\mathtt{t}_{\mathtt{n}-2}, \mathtt{t}_{\mathtt{n}-1})\ell(\mathtt{t}_{\mathtt{n}-1})\mathtt{g}_{\mathtt{n}-1}\bigg[ \int_{0}^{1}\Xi_{r_0}(\mathtt{t}_{\mathtt{n}-1}, \mathtt{t}_\mathtt{n})\ell(\mathtt{t}_\mathtt{n})\mathtt{g}_\mathtt{n}\big( \mathtt{u}_1(\mathtt{t}_\mathtt{n})\big)\mathtt{d}\mathtt{t}_\mathtt{n}\bigg]\mathtt{d}\mathtt{t}_{\mathtt{n}-1}\\
		&\hskip1cm-\int_{0}^{1}\Xi_{r_0}(\mathtt{t}_{\mathtt{n}-2}, \mathtt{t}_{\mathtt{n}-1})\ell(\mathtt{t}_{\mathtt{n}-1})\mathtt{g}_{\mathtt{n}-1}\bigg[ \int_{0}^{1}\Xi_{r_0}(\mathtt{t}_{\mathtt{n}-1}, \mathtt{t}_\mathtt{n})\ell(\mathtt{t}_\mathtt{n})\mathtt{g}_\mathtt{n}\big( \mathtt{u}_1(\mathtt{t}_\mathtt{n})\big)\mathtt{d}\mathtt{t}_\mathtt{n}\bigg]\mathtt{d}\mathtt{t}_{\mathtt{n}-1}\bigg\vert\\
		&\le\frac{\mathtt{K} r_0^2}{(\mathtt{N}-2)^2}\int_{0}^{1}\vert\Upsilon(\mathtt{t}_{\mathtt{n}-1})\vert\alpha_1\uprho(\mathtt{u}_1,\mathtt{v}_1)\mathtt{d}\mathtt{t}_{\mathtt{n}-1}\\
		&\le \widehat{\alpha}_1{\alpha}_1^\star\uprho(\mathtt{u}_1,\mathtt{v}_1),
	\end{aligned}
	$$
	where $$\widehat{\alpha}_1=\frac{\mathtt{K} r_0^2}{(\mathtt{N}-2)^2}\int_{0}^{1}\vert\Upsilon(\mathtt{t})\vert \mathtt{d}\mathtt{t}.$$
	Continuing with bootstrapping argument, we get
	$$\left\vert\Lambda\mathtt{u}_1(\mathtt{t})-\Lambda\mathtt{v}_1(\mathtt{t})\right\vert\le\widehat{\alpha}_1^{\mathtt{n}}\alpha_1^\star\uprho(\mathtt{u}_1,\mathtt{v}_1).$$
	we see that
	\begin{equation}\label{eq51}
		\mathtt{d}(\Lambda \mathtt{u}_1,\Lambda \mathtt{v}_1)\le\alpha_1\uprho(\mathtt{u}_1,\mathtt{v}_1),
	\end{equation}
	for some $\alpha_1=\widehat{\mathtt{c}}_1^{\mathtt{n}}\alpha_1^\star>0$ for all $\mathtt{u}_1,\mathtt{v}_1\in\mathtt{v},$
	and so the inequality \eqref{eq31} of Theorem \ref{t2} holds. Now, for all $\mathtt{u}_1,\mathtt{v}_1\in\mathtt{v},$ we may apply \eqref{eq313} to \eqref{eq51} to obtain
	$$\mathtt{d}(\Lambda \mathtt{u}_1,\Lambda \mathtt{v}_1)\le\alpha_1\uprho(\mathtt{u}_1,\mathtt{v}_1)\le\alpha_1\mathtt{d}(\mathtt{u}_1,\mathtt{v}_1).$$
	Thus, given any $\varepsilon>0$ we can choose $\upeta=\varepsilon/\alpha_1$ so that $\mathtt{d}(\Lambda \mathtt{u}_1,\Lambda \mathtt{v}_1)<\varepsilon,$
	whenever $\mathtt{d}(\mathtt{u}_1,\mathtt{v}_1)<\upeta.$ Hence $\Lambda $ is continuous on $\mathtt{v}$ with respect to the metric $\mathtt{d}.$ Finally, we show that $\Lambda $ is contractive on $\mathtt{v}$ with respect to the metric $\uprho.$ From \eqref{eq51}, for each $\mathtt{u}_1,\mathtt{v}_1\in\mathtt{v}$ consider
	$$
	\begin{aligned}
		\bigg[\int_{0}^{1}\vert(\Lambda\mathtt{u}_1)(\mathtt{t})-(\Lambda \mathtt{v}_1)(\mathtt{t})\vert^\mathtt{p}\mathtt{d}\mathtt{t}\bigg]^{\frac{1}{\mathtt{p}}}
		&\le\left[\int_{ 0}^{1}\left\vert\widehat{\mathtt{c}}_1^{\mathtt{n}}\alpha_1^\star\uprho(\mathtt{u}_1,\mathtt{v}_1)\right\vert^\mathtt{p}\mathtt{d}\mathtt{t}\right]^{\frac{1}{\mathtt{p}}}\\
		&\le\,\left[\frac{\mathtt{K}r_0^2}{(\mathtt{N}-2)^2}\right]^{\mathtt{n}+1}\left[\int_0^1\vert\Upsilon(\mathtt{t})\vert \mathtt{d}\mathtt{t}\right]^\mathtt{n}\left[\int_0^1\vert\Upsilon(\mathtt{t})\vert^\mathtt{q} \mathtt{d}\mathtt{t}\right]^{\frac{1}{q}}\uprho(\mathtt{v}_1,\mathtt{v}_2).
	\end{aligned}
	$$
	That is $$
	\begin{aligned}
		\uprho(\Lambda \mathtt{u}_1,\Lambda \mathtt{v}_1)\le\left[\frac{\mathtt{K}r_0^2}{(\mathtt{N}-2)^2}\right]^{\mathtt{n}+1}\left[\int_0^1\vert\Upsilon(\mathtt{t})\vert \mathtt{d}\mathtt{t}\right]^\mathtt{n}\left[\int_0^1\vert\Upsilon(\mathtt{t})\vert^\mathtt{q} \mathtt{d}\mathtt{t}\right]^{\frac{1}{q}}\uprho(\mathtt{v}_1,\mathtt{v}_2).
	\end{aligned}
	$$
	From the assumption \eqref{eq81}, we have
	$$\uprho(\Lambda \mathtt{v}_1,\Lambda \mathtt{v}_2)\le\alpha_2\uprho(\mathtt{v}_1,\mathtt{v}_2)$$
	for some $\alpha_2<1$ and all $\mathtt{v}_1,\mathtt{v}_2\in\mathtt{v}.$ Thus, Theorem \ref{t2}, the operator $\Lambda $ has a unique fixed point in $\mathtt{v}.$ Also, we note that the operator $\Lambda$ is positive from Lemma \ref{l24}. Therefore, the boundary value problem \eqref{eq11} has a unique positive radial solution.\qed
\end{proof}	
\begin{example}
	Consider the following nonlinear elliptic system of equations,
	\begin{equation}\label{e10}
		\triangle{  \mathtt{u}_{{\dot{\iota}} }}+\frac{(\mathtt{N}-2)^2r_0^{2\mathtt{N}-2}}{\vert x\vert^{2\mathtt{N}-2}}\mathtt{u}_{\dot{\iota}} +\ell(\vert x\vert)\mathtt{g}_{{\dot{\iota}} }(\mathtt{u}_{{\dot{\iota}} +1})=0,~1<\vert x\vert<2,
	\end{equation}
	\begin{equation}\label{e11}\left.
		\begin{aligned}
			&\hskip0.9cm \mathtt{u}_{{\dot{\iota}} }=0~~\text{on}~~\vert x\vert=1~\text{and}~\vert x\vert=2,\\
			&\mathtt{u}_{{\dot{\iota}} }=0~~\text{on}~~\vert x\vert=1~\text{and}~\frac{\partial \mathtt{u}_{\dot{\iota}} }{\partial r}=0~\text{on}~\vert x\vert=2,\\
			&\frac{\partial \mathtt{u}_{\dot{\iota}} }{\partial r}=0~~\text{on}~~\vert x\vert=1~\text{and}~\mathtt{u}_{{\dot{\iota}} }=0~\text{on}~\vert x\vert=2,
		\end{aligned}\right\}
	\end{equation}
\end{example}
where $ r_0=1,$ $\mathtt{N}=3,$ ${\dot{\iota}} \in\{1,2\},\, \mathtt{u}_3= \mathtt{u}_1,$  $\ell(\mathtt{s})=\frac{1}{\mathtt{s}^4}\prod_{i=1}^{2}\ell_i(\mathtt{s}),$ $\ell_i(\mathtt{s})=\ell_i\left(\frac{1}{\mathtt{s}}\right),$
in which
$\ell_1(t)=\ell_2(t)=\frac{1}{t+1},$
then $\prod_{i=1}^{2}\ell_i^*=1.$
Let $\mathtt{g}_1(\mathtt{u})=\frac{1}{10^{4}}\cos(\mathtt{u}),\,\mathtt{g}_2(\mathtt{u})=\frac{\mathtt{u}}{10^{4}(\mathtt{u}+1)}$ and $\upalpha=\upbeta=\upgamma=\updelta=1,$ then  $\uprho=2\cosh(1)+2\sinh(1)\approx5.436563658,$ 
$$
\begin{aligned}
	\Xi_{r_0}(\mathtt{s},\mathtt{t})=\frac{1}{2\cosh(1)+2\sinh(1)}\left\{\begin{array}{ll}\big(\sinh(\mathtt{s})+\cosh(\mathtt{s})\big)\big(\sinh(1-\mathtt{t})+\cosh(1-\mathtt{t})\big), \hskip0.6cm 0  \le \mathtt{s}\le \mathtt{t}\le 1,\vspace{1.2mm}\\	
		\big(\sinh(\mathtt{t})+\cosh(\mathtt{t})\big)\big(\sinh(1-\mathtt{s})+\cosh(1-\mathtt{s})\big),
		\hskip0.6cm  0  \le \mathtt{t}\le \mathtt{s}\le 1,
	\end{array}
	\right.
\end{aligned}
$$
and $\wp=\frac{3}{\cosh(1)}.$ Then,
$$\vert\mathtt{g}_1(\mathtt{u})-\mathtt{g}_1(\mathtt{v})\vert=\frac{\vert\cos(\mathtt{u})-\cos(\mathtt{v})\vert}{10^{4}}\le\frac{1}{10^{4}}\vert\mathtt{u}-\mathtt{v}\vert,$$
and
$$\vert\mathtt{g}_2(\mathtt{u})-\mathtt{g}_2(\mathtt{v})\vert=\frac{1}{10^{4}}\left\vert\frac{\mathtt{u}}{\mathtt{u}+1}-\frac{\mathtt{v}}{\mathtt{v}+1}\right\vert\le\frac{1}{10^{4}}\vert\mathtt{u}-\mathtt{v}\vert.$$
So, $\mathtt{K}=\frac{1}{10^{4}}.$ Let $\mathtt{n}=2$ and $\mathtt{p}=\mathtt{q}=2.$ Then,
$$
\left[\frac{\mathtt{K}r_0^2}{(\mathtt{N}-2)^2}\right]^{\mathtt{n}+1}\left[\int_0^1\vert\Upsilon(\mathtt{t})\vert \mathtt{d}\mathtt{t}\right]^\mathtt{n}\left[\int_0^1\vert\Upsilon(\mathtt{t})\vert^\mathtt{q} \mathtt{d}\mathtt{t}\right]^{\frac{1}{q}}\approx0.3149700790<1.
$$
Therefore, from Theorem \ref{tmet}, the iterative system of boundary value problems \eqref{e10}--\eqref{e11} has a unique positive radial solution.\\

\noindent\textbf{Acknowledgement} \\

\noindent\textbf{Author contributions} The study was carried out in collaboration of all authors. All authors read and approved the final manuscript.\\

\noindent\textbf{Funding} Not Applicable.\\

\noindent\textbf{Data availibility statement} Data sharing not applicable to this paper as no data sets were generated or analyzed during the current study.
\section*{Compliance with ethical standards}
\textbf{Conflict of interest} It is declared that authors has no competing interests.\\

\noindent\textbf{Ethical approval} This article does not contain any studies with human participants or animals performed by any of the authors.


\begin{thebibliography}{99}
\bibitem{ali3}J. Ali and S. Padhi, Existence of multiple positive radial solutions to elliptic equations in an annulus, {\it Com. App. Anal.} {\bf 22}(4), 695--710 (2018).

\bibitem{alm}S. S. Almuthaybiria, C. C. Tisdell,  Sharper existence and uniqueness results for solutions to third order boundary value problems, {\it Math. Model. Anal.}, {\bf25}(3) (2020), 409--420.
	
\bibitem{avery} R. I. Avery, J. Henderson, \textit{Two positive fixed points of nonlinear operators on ordered Banach spaces}, Comm. Appl. Nonlinear Anal. 8 (2001) 27--36.	
	
\bibitem{dal} R. Dalmasso, Existence and uniqueness of positive solutions of semilinear elliptic systems, {\it Nonlinear Anal.} \textbf{39}\,(5) (2000) 559--568.

\bibitem{hai1} D.D. Hai, R. Shivaji, An existence result on positive solutions for a class of semilinear elliptic systems, {\it Proc. R. Soc. Edinb.}, Sect. A \textbf{134}\,(1) (2004) 137--141.

\bibitem{ali1}J. Ali, R. Shivaji, M. Ramaswamy, Multiple positive solutions for classes of elliptic systems with combined nonlinear effects, {\it Differ. Integral Equ.} \textbf{19}\,(6) (2006) 669--680.

\bibitem{hai2}D.D. Hai, Uniqueness of positive solutions for semilinear elliptic systems, {\it J. Math. Anal. Appl.} \textbf{313}\,(2) (2006) 761--767.

\bibitem{hai3}D.D. Hai, R. Shivaji, Uniqueness of positive solutions for a class of semipositone elliptic systems, {\it Nonlinear Anal.} \textbf{66}\,(2) (2007) 396--402.

\bibitem{ali2}J. Ali, K. Brown, R. Shivaji, Positive solutions for $n\times n$ elliptic systems with combined nonlinear effects, {\it Differ. Integral Equ.} \textbf{24}\,(3-4) (2011) 307--324.

\bibitem{chro}M. B. Chrouda, K. Hassine, Uniqueness of positive radial solutions for elliptic equations in an annulus, {\it Proc. Amer. Math. Soc.}, (2020). https://doi.org/10.1090/proc/15286


\bibitem{guo}D. Guo and V. Lakshmikantham: \textit{ Nonlinear Problems in Abstract
	Cones}, Academic Press, San Diego, (1988).

\bibitem{kaji}R. Kajikiya, E. Ko, Existence of positive radial solutions for a semipositone elliptic equation, {\it J. Math. Anal. Appl.} \textbf{484} (2020) 123735.

\bibitem{lan} K. Lan and J. R. L. Webb, Positive solutions of semilinear differential equations with singularities, {\em J. Differ. Equ.} \textbf{148}, 407--421 (1998).

\bibitem{lee}Y. H. Lee, Multiplicity of positive radial solutions for multiparameter semilinear elliptic systems on an annulus. {\em J. of Differ. Equ.} {\bf 174}(2), 420--441 (2001).

\bibitem{legget}R. W. Legget, L. R. Williams, Multiple positive fixed points of nonlinear operators on ordered Banach spaces, {\em Indiana Univ. Math. J.} {\bf28} (1979), 673--688.

\bibitem{li1}: Yongxiang Li, Positive radial solutions for elliptic equations with nonlinear gradient terms in an annulus, {\em Complex Var. Elliptic Equ.}, (2017). DOI:10.1080/17476933.2017.1292261

\bibitem{mei}Linfeng Mei, Structure of positive radial solutions of a quasilinear elliptic problem with singular nonlinearity, {\em Complex Var. Elliptic Equ.}, (2017). DOI:10.1080/17476933.2017.1399367

\bibitem{ni}W. M. Ni, Some aspects of semilinear elliptic equations on $\mathbb{R}^n,$ in Nonlinear Diffusion Equations and Their Equilibrium States II (Ed. by W. M. Ni, L. A. Peletier, J. Serrin), Springer-Verlag, New York (1988), 171--205.

\bibitem{rus}I. A. Rus, On a fixed point theorem of Maia, {\it Studia Univ. "Babes-Bolyai", Mathematica}, {\bf1} (1977), 40--42.


\bibitem{son}B. Son, P. Wang, Positive radial solutions to classes of nonlinear elliptic systems on the exterior of a ball, {\it J. Math. Anal. Appl.} \textbf{488} (2020) 124069.

\bibitem{stin}C. P. Stinson, S. S.  Almuthaybiri, C. C. Tisdell, A note regarding extensions of fixed point theorems involving two metrics via an analysis of iterated functions, {\it ANZIAM J.}, 61(EMAC2019) (2020), C15--C30.

\bibitem{yana}E. Yanagida, Uniqueness of positive radial solutions of $\triangle u+\mathtt{g}(\vert \mathtt{x}\vert)u+\mathtt{g}(\vert \mathtt{x}\vert)u^p=0$ in $\mathbb{R}^n,$ {\it Arch. Rational Mech. Anal.}, {\bf115}(1991), 257--274. 
\end{thebibliography}
\end{document}